\newcommand{\mathbi}[1]{{\boldsymbol #1}}
\def\R{\mathbb{R}}
\def\dist{{\rm dist}}
\def\tpen{R}
\def\iso{{\mathcal L}}
\def\pen{{R}}
\def\n{\mathbf{n}}
\def\dx{\,{\rm d}x}
\def\P{\mathcal{P}}
\def\<{\langle}
\def\>{\rangle}
\def\gr{{\mathcal G}}
\def\diam{\mathrm{diam}}
\def\dsp{\displaystyle} 
\def\div{{\rm div}}
\def\refe#1{\eqref{#1}}
\def\norm#1#2{\Vert#1\Vert_{#2}}
\def\err{\mathsf{err}}
\newcounter{cst}
\def \ctel#1{C_{\refstepcounter{cst}\label{#1}\thecst}}
\def \cter#1{C_{\ref{#1}}}
\def\bt{\begin{theorem}}
\def\et{\end{theorem}}
\def\bl{\begin{lemma}}
\def\el{\end{lemma}}
\def\bc{\begin{corollary}}
\def\ec{\end{corollary}}
\def\bd{\begin{definition}}
\def\ed{\end{definition}}
\def\br{\begin{remark}}
\def\er{\end{remark}}
\newcommand{\x}{\mathbi{x}}
\newcommand{\e}{\mathbi{e}}
\renewcommand{\c}{\mathbi{c}}
\renewcommand{\a}{\mathbi{a}}
\newcommand{\y}{\mathbi{y}}
\newcommand{\z}{\mathbi{z}}
\newcommand{\bu}{\overline{u}}
\newcommand{\bv}{\overline{v}}
\newcommand{\bpsi}{\mathbi{\psi}}
\newcommand{\polyd}{{\mathcal T}}
\newcommand{\mesh}{{\mathcal M}}
\newcommand{\centers}{\mathcal{P}}
\newcommand{\groupcell}{\mathfrak{P}}
\newcommand{\patch}{\mathrm{Pa}}
\newcommand{\Pmesh}{\pi^w_\mesh}
\newcommand{\strip}{\mathbf{S}}
\newcommand{\itr}{$\polyd_0$}
\newcommand{\itrh}{\scalebox{-1}[1]{\itr}}
\newcommand{\itrv}{\scalebox{1}[-1]{\itr}}
\newcommand{\itrr}{\scalebox{-1}[-1]{\itr}}
\def\cell{K}
\newcommand{\edge}{{\sigma}}
\newcommand{\edges}{{\mathcal E}}              
\newcommand{\edgescv}{{{\edges}_\cell}}  
\newcommand{\edgesext}{{{\edges}_{\rm ext}}} 
\newcommand{\edgesint}{{{\edges}_{\rm int}}} 
\newcommand{\centeredge}{\overline{\mathbi{x}}_\edge} 
\newcommand{\vertex}{{\mathsf{v}}}
\newcommand{\be}{\begin{equation}}
\newcommand{\ee}{\end{equation}}
\newcommand{\disc}{{\mathcal D}}
\newcommand{\discs}{{\disc^*}}
\def\tW{\widetilde{W}}
\def\nablacell{\nabla_{\!\cell\,}\!}
\renewcommand{\O}{\Omega}
\def\dr{\partial}
\def\bfn{\mathbf{n}}
\def\ncvedge{\n_{\cell,\edge}}
\renewcommand{\d}{\,{\rm d}}
\newcommand{\ba}{\begin{array}{llll}   }
\newcommand{\bac}{\begin{array}{c}}
\newcommand{\bari}{\begin{array}{r}}
\newcommand{\ea}{\end{array}}
\newcommand{\NORM}[1]{{\left\vert\kern-0.25ex\left\vert\kern-0.25ex\left\vert #1 
    \right\vert\kern-0.25ex\right\vert\kern-0.25ex\right\vert}}
\def\grad{\nabla}
\def\WS{{\rm WS}} 
\def\hI{{\rm I}} 
\def\eaaD{{\rm E}} 
\newtheorem{theorem}{Theorem}[section]
\newtheorem{remark}[theorem]{Remark}
\newtheorem{lemma}[theorem]{Lemma} 
\newtheorem{definition}[theorem]{Definition}
\newtheorem{proposition}[theorem]{Proposition}
\newtheorem{corollary}[theorem]{Corollary}
\numberwithin{equation}{section}
\newcounter{cexp}
\def\terml#1{T_{\refstepcounter{cexp}\@bsphack
\protected@write\@auxout{}%
           {\string\newlabel{#1}{{\thecexp}{\thepage}}}\thecexp}}
\begin{document}
\title[Improved $L^2$ estimate for gradient schemes]{Improved $L^2$ estimate for gradient schemes and super-convergence of the TPFA finite volume scheme}

\author{J\'er\^ome Droniou}
\address{School of Mathematical Sciences, Monash University, Clayton, Victoria 3800, Australia.
\texttt{jerome.droniou@monash.edu}}
\author{Neela Nataraj}
\address{Department of Mathematics, Indian Institute of Technology Bombay, Powai, Mumbai 400076, India.
\texttt{neela@math.iitb.ac.in}}
\date{\today}

\maketitle

\begin{abstract}
The gradient discretisation method is a generic framework that is applicable to a number of schemes
for diffusion equations, and provides in particular generic error estimates in $L^2$ and $H^1$-like
norms. In this paper, we establish an improved $L^2$ error estimate for gradient schemes.
This estimate is applied to a family of gradient schemes, namely, the Hybrid Mimetic
Mixed (HMM) schemes, and yields an $\mathcal O(h^2)$ super-convergence rate
in $L^2$ norm, provided local compensations occur
between the cell points used to define the scheme and the centers of mass of the cells.
To establish this result, a modified HMM method is designed by just changing the quadrature
of the source term; this
modified HMM enjoys a super-convergence result even on meshes without local compensations.
Finally, the link between HMM and Two-Point Flux Approximation (TPFA)
finite volume schemes is exploited to partially answer a long-standing conjecture
on the super-convergence of TPFA schemes.
\end{abstract}

{\small
\textbf{Keywords}: super-convergence, two-point flux approximation finite volumes, hybrid mi\-me\-tic mixed methods, gradient schemes.

\smallskip

\textbf{AMS subject classifications}: 65N08, 65N12, 65N15.
}

\section{Introduction}

When applying a numerical scheme to an elliptic partial differential equation,
the expected rate of convergence is directly dependent on the interpolation
properties of the approximation space. For example, when using a piecewise
constant approximation, as in many finite volume methods, the expected
rate of convergence in $L^2$ norm is $\mathcal O(h)$, where $h$ is the
mesh size. Super-convergence is the phenomenon that occurs when a numerical method displays
a better convergence rate than the expected one.

Let us consider the linear elliptic second-order problem
\be\label{base}
\left\{
\ba
-\div(A\nabla \bu)=f\mbox{ in $\O$},\\
\bu=0\mbox{ on $\dr\O$},
\ea
\right.
\ee
where
\be\label{assump}
\begin{aligned}
&\Omega \subset {\mathbb R}^d \; (d \ge 1)\mbox{ is a bounded domain},\;f\in L^2(\O),\\
&A:\O\rightarrow \mathcal M_d(\R)\mbox{ is measurable, bounded, uniformly elliptic,}\\
&\qquad\mbox{and $A(\x)$ is symmetric for a.e. $\x\in \O$}.
\end{aligned}
\ee
Problem \eqref{base} is understood in the usual weak sense, that  is:
\begin{equation}\label{weak_state}
\mbox{Find $\bu\in H^1_0(\O)$ such that, for all $v\in H^1_0(\O)$},\;
a(\bu,v)=(f,v),
\end{equation}
where $(\cdot,\cdot)$ is the scalar product in $L^2(\O)$ and
\[
a(v,w)=
\int_\O A\nabla v\cdot\nabla {w} \d\x \; \quad \forall v,w \in H^1_0(\O).
\]
Under the assumption \eqref{assump}, Problem \eqref{weak_state} has a unique solution. 

For many (low-order) finite volume methods, $\mathcal O(h)$ error estimates
in the $L^2$ norm and a discrete $H^1$ norm are known, and several super-convergence results
have been numerically observed for the $L^2$ norm without being proved yet (see \cite{review} and references therein).

\medskip

The Two-Point Flux Approximation (TPFA) finite volume scheme is a very popular scheme
used for decades in reservoir simulation \cite{peaceman-history}. It has been fully
analysed in \cite{EGH00}, and extensively tested in a number of situations.
Due to its construction, classical TPFA benchmarks are conducted on 2D meshes made
of acute triangles \cite{BH00,dom-05-fin}. This scheme uses piecewise constant approximations and hence the expected rate of convergence in $L^2$ norm is $\mathcal O(h)$.
However, the aforementioned numerical tests have shown that this piecewise
constant approximation provides an $\mathcal O(h^2)$
estimate of the value of the solution at the circumcenters of the triangles.
Such a super-convergence result was never proved theoretically. 

\medskip 

The main contribution
of this paper is to give a rigorous proof of this super-conver\-gence.
Precisely, we establish an $\mathcal O(h^{2})$
estimate in $L^2$ norm of the difference between
the solution to the TPFA scheme and the piecewise constant projection of
the exact solution constructed from its values at the circumcenters of the triangles.
Our result
covers all 2D meshes encountered in TPFA benchmarking.


\medskip

Previous works have established some relations between
TPFA and $\mathbb{RT}_0$--$\mathbb{P}_0$, provided particular choices of numerical integrations
are used \cite{J84,BMO}. These relations are therefore not exact algebraic
equivalence, and do not allow one to deduce the super-convergence for TPFA
from the super-convergence for $\mathbb{RT}_0$--$\mathbb{P}_0$ \cite{DM85}.
A relation, not based on numerical integration, between TPFA on triangles and $\mathbb{RT}_0$--$\mathbb{P}_0$ mixed finite elements has been established
in \cite{CK}, but has a limited scope, since the source term $f$ must vanish \cite{AMS} (see also
\cite{YMAC,CYA}). If the source term is not zero, then $\mathbb{RT}_0$--$\mathbb{P}_0$ can be reformulated
as a finite volume method, which is different from TPFA since the source term is involved in the definition of the fluxes. We refer to \cite{VW} for a thorough study of mixed finite element methods interpreted as finite volume methods, and related fluxes and properties. 

\medskip

In any case, these various relations between TPFA and $\mathbb{RT}_0$--$\mathbb{P}_0$ do not seem to directly lead to a proof of the observed super-convergence of TPFA. This is due to variations in the choice of approximation points. The TPFA interpretation of $\mathbb{RT}_0$--$\mathbb{P}_0$ requires to introduce new cell unknowns located at the circumcenters of the triangles, which do not correspond to the standard $\mathbb{RT}_0$--$\mathbb{P}_0$ cell unknowns, located at the centers of mass of the triangles,
for which the super-convergence is proved.

\medskip

In \cite{HWY16}, a relationship is established on Voronoi meshes between the TPFA
scheme and a generalised mixed-hybrid mimetic finite difference 
method (with cell and face centers moved away from the centers of mass).
A super-convergence of this method is established, under the assumption
that certain lifting operators exist. This existence is only checked
in the case of rectangular cells (for which TPFA amounts to a finite difference scheme).

\medskip

It should also be mentioned that some post-processing techniques can provide,
under certain circumstances, an $\mathcal O(h^2)$ convergence in $L^2$ norm for functions reconstructed from the
solutions to finite volume approximations. One of these post-processing technique, using two TPFA schemes on two
dual meshes, is described in \cite{OMN11}. These quadratic convergences of post-processed solutions however do not say anything
specific on the super-convergence of the original finite volume scheme.

The super-convergence result for TPFA  established in the present paper holds without post-processing, for the natural unknown at the circumcenter of the triangles, and on all the kinds of triangular meshes used in benchmarking.
 This result therefore appears to solve a long-standing conjecture on this popular finite volume method, on 2D triangular meshes as encountered in practical test-cases.

\medskip

The technique used to prove the super-convergence of TPFA is an indirect one. We use the fact
that, on 2D triangular grids, the TPFA scheme is an HMM method.
HMM sche\-mes, defined in \cite{dro-10-uni}, is a family of methods
that includes mixed-hybrid mimetic finite difference (hMFD) 
schemes \cite{bre-05-fam,BLM:book,mfdrev}, mixed finite volume schemes \cite{dro-06-mix} and 
hybrid finite volume (``SUSHI'') schemes \cite{sushi}. 
The construction of an HMM scheme requires to choose one point inside each
mesh cell. When this point is at the center of mass of the cell, 
HMM schemes boil down to hMFD schemes and super-convergence is then
known \cite{BLM:book,DPLE14,phdbonelle,EB14}. 
But when this cell point is moved away from the center of mass,
super-convergence is less clear and can possibly fail, as we show in a
numerical test. On triangular grids, the TPFA scheme is an HMM method precisely when
these cell points are not located at the centers of mass, but at the
circumcenters of the cells. Establishing the super-convergence of
TPFA through its identification as an HMM scheme therefore requires first
to obtain a super-convergence result for HMM methods with cell points
located away from centers of mass of the cells.

\medskip

This super-convergence for HMM schemes is obtained through a new,
improved $L^2$ estimate for gradient schemes.
A gradient scheme for, say, \eqref{base} is obtained by selecting a family of discrete space and operators, called a gradient discretisation (GD), and by substituting, in the weak formulation of \eqref{base}, the continuous space and operators with these discrete ones. This method is
called the gradient discretisation method (GDM). The vast possible choice
of GD makes the GDM a generic framework for the convergence analysis
of many numerical methods, which include finite elements, mixed finite elements, finite volume, mimetic finite difference methods, HMM, etc. for diffusion, Navier--Stokes, elasticity equations and some other models. We refer to \cite{DEH15} for an analysis of the methods
covered by this framework, and to \cite{eym-12-sma,DL14,dro-14-sto,eym-12-stef,dro-12-gra,zamm2013,dro-14-deg}
for a few models on which the convergence analysis can be carried out within this framework;
see also the monograph \cite{koala} for a complete presentation of the GDM for various boundary conditions. 
Each specific scheme corresponds to a certain choice of GD, and the convergence analysis
conducted in the GDM applies to all choices of GD and thus, to all the schemes covered
by the framework.
A generic error estimate has been established for the GDM applied to \eqref{base}.
This estimate gives the standard $\mathcal O(h)$ rate of convergence in $H^1$ norm for the low-order methods covered by the GDM, such as the HMM schemes \cite{dro-12-gra}.

\medskip 

To summarise, the contributions of this paper are
\begin{itemize}
\item[(i)] an improved $L^2$ estimate for gradient schemes, in any dimension $d$,
\item[(ii)] a modified HMM scheme with unconditional super-convergence, in dimension $d\le 3$,
\item[(iii)] a super-convergence result for HMM, in dimension $d\le 3$, and
\item[(iv)] a super-convergence result for TPFA, in dimension $d=2$ on triangular
meshes as encountered in benchmarks.
\end{itemize} 
The improved $L^2$ error estimate for gradient schemes involves, as in the Aubin--Nitsche trick, the solution to a dual problem.
Applied to HMM schemes, this new estimate
provides an $\mathcal O(h^2)$ super-convergence result when
some form of local compensation occurs; that is, the cell points may
be away from the centers of mass, but not too far away on average
over a few neighbouring cells. The proof of the super-convergence of TPFA then consists in
checking that, for triangular meshes used in TPFA benchmarkings, this local compensation always occur.
A by-product of the proof of super-convergence for HMM schemes is the design of a {\it modified
HMM scheme}, in which only the right-hand side is modified. This modified HMM has
the same matrix, and same computational cost as the original HMM since only the quadrature
of the source term is modified;  but yields super-convergence
for any choice of cell points, even when the standard HMM scheme fails to super-converge.

\medskip

The paper is organised as follows. The description of the TPFA scheme and of the meshes used in benchmarking are 
provided in Subsection \ref{sec:def.TPFA} at the end of this introduction. This section also states our main result, that is the 
super-convergence of TPFA. Section \ref{sec:defGS} recalls the principle of the GDM and Section \ref{sec:improved.L2.GS} establishes the improved $L^2$
estimate. In Section \ref{sec:HMM}, the construction of HMM method is recalled and a modified HMM method is designed.  In Section  \ref{sec:imp.HMM}, we state and prove a new $L^2$ error estimate for HMM, that involves patches of cells. When these patches can be 
chosen so that a compensation occurs, within each patch, between the cell points and the centers of mass, this new 
$L^2$ 
estimate provides the super-convergence of HMM. The proof of the super-convergence of TPFA is given at the end of 
Section \ref{sec:imp.HMM}. Numerical results provided in Section \ref{sec:num.tests} show that in the absence of patches as above, super-convergence may fail for HMM schemes but holds for the modified HMM scheme.
A conclusion, recalling the main results, is given in Section \ref{sec:concl}.
Section \ref{sec:appen}, an appendix, gathers various 
results: a proper analysis of approximate diffusion tensors $A$ in the construction of gradient schemes; some technical 
results used in the rest of the paper; and a discussion on the implementation of the HMM method, the modified HMM 
method, and their corresponding fluxes.

\medskip

Two remarks are in order to conclude
this introduction. First, we consider here homogeneous Dirichlet boundary
conditions in \eqref{base} only for the sake of simplicity. The gradient scheme framework
has been developed for all classical boundary conditions \cite{koala} and our technique applies to other boundary conditions
with minor modifications. Secondly, although we only apply it to HMM and TPFA schemes, the improved $L^2$ error estimate that we establish in the context of the
GDM could certainly lead to super-convergence results for other
schemes contained in this framework, such as discrete duality finite volumes, some
multi-point flux approximation finite volumes, etc.

\subsection{Super-convergence for TPFA}\label{sec:def.TPFA}

Consider a TPFA-admissible mesh $\polyd$ as in \cite{EGH00}. $\polyd$
is therefore a partition
of $\Omega$ into polygonal cells $\mesh$ together with a choice of points $(\x_K)_{K\in\mesh}$
in the cells such that, denoting by $\edges_K$ the edges of $K\in\mesh$,
\begin{itemize}
\item for any neighbourhing cells $K$ and $L$ in $\mesh$,
if $\edge\in \edges_K\cap\edges_L$ then $(\x_K\x_L)\bot \edge$,
\item for any cell $K\in\mesh$, if $\edge\in\edges_K$ and $\edge\subset \partial\Omega$ then
$(\x_K+\R^+\bfn_{K,\edge})\cap \edge\not=\emptyset$,
where $\bfn_{K,\edge}$ is the normal to $\edge$ pointing outward $K$.
\end{itemize}
Let $\edgesint$ be the set of edges interior to $\Omega$ and $\edgesext$ be the
set of edges lying on $\partial\Omega$.
If $A$ is an isotropic tensor, that is, $A(\x)=a(\x){\rm Id}$ for some $a(\x)\in (0,\infty)$,
the TPFA method for \eqref{base} on $\polyd$ reads \cite{EGH00,review}:
\begin{multline}\label{tpfa.scheme}
\mbox{Find $u=(u_K)_{K\in\mesh}$ such that:}\\
\forall K\in\mesh\,,\;\sum_{\edge\in\edgescv\cap\edgesint} \tau_\edge(u_K-u_L)
+\sum_{\edge\in\edgescv\cap\edgesext} \tau_\edge u_K=\int_Kf(\x)\d\x,
\end{multline}
where $L$ is the cell on the other side of $\edge$ if $\edge\in\edgescv\cap\edgesint$,
and, with $a_K$ being the average value of $a$ on $K$,
\begin{align*}
&\forall\edge\in \edgesint\,,\mbox{ if $K\not=L$ are the cells on both
sides of $\edge$,}\;
\tau_\edge=|\edge|\frac{a_Ka_L}{a_Kd_{L,\edge}+a_L d_{K,\edge}}\,,\\
&\forall\edge\in\edgesext\,,\mbox{ if $K$ is the cell such that $\edge\in\edges_K$}\,,\;
\tau_\edge=|\edge|\frac{a_K}{d_{K,\edge}}.
\end{align*}

In 2D, a classical way to construct meshes satisfying the orthogonality property
$(\x_K\x_L)\bot \edge$ is to partition $\O$ into a conforming triangulation
with acute triangles, and to take each $\x_K$ as the circumcenter of $K$.
Three triangulation constructions are widely used in TPFA benchmarkings, see e.g. \cite{BH00,dom-05-fin}:
subdivision, reproduction by symmetry, or reproduction by translation.
Actually, we are not aware of any reported benchmark on TPFA that uses different mesh constructions.

\begin{definition}[Classical TPFA triangulation]\label{def:classical.tpfa.tri}
Let $\O$ be a polygonal bounded open set of $\R^2$.
A classical TPFA triangulation of $\O$ is a conforming acute triangulation $\polyd$
of $\O$ such that, for all $K\in\mesh$, $\x_K$ is the circumcenter of $K$,
and that is constructed in one of the following ways (illustrated for a square domain $\O$ 
in Figure \ref{fig:classical_subd}):
\begin{itemize}
\item \emph{Subdivision}: an initial triangulation \itr{} of $\O$ is chosen, and then subdivided
by creating on each edge an identical number of equally spaced points, by joining the corresponding points on different edges, and by adding the interior points resulting
from intersections of the lines thus created,
\item \emph{Reproduction by symmetry}: an initial triangulation \itr{} of the unit square is chosen,
this unit square is shrunk by a factor $N$ and reproduced in the entire domain by symmetry.
\item \emph{Reproduction by translation}: an initial triangulation \itr{} of the unit square is chosen,
this unit square is shrunk by a factor $N$ and reproduced in the entire domain by translation.
\end{itemize}
\end{definition}

\begin{figure}
\begin{tabular}{ccc}
{\resizebox{0.3\linewidth}{!}{\input{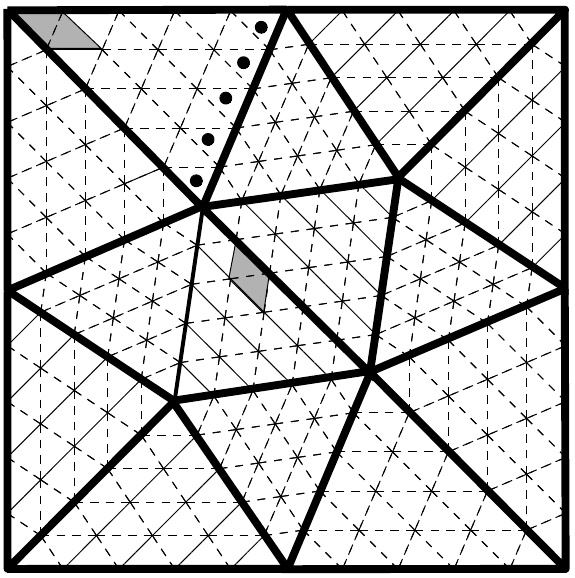_t}}}
&{\resizebox{0.3\linewidth}{!}{\input{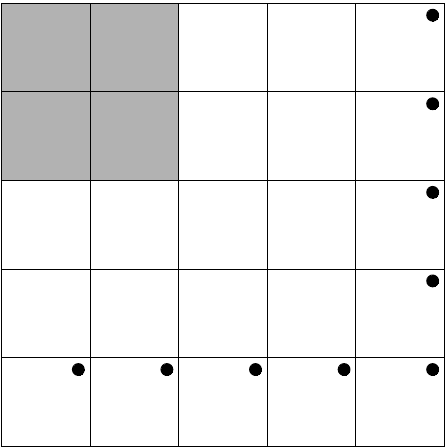_t}}}
&{\resizebox{0.3\linewidth}{!}{\input{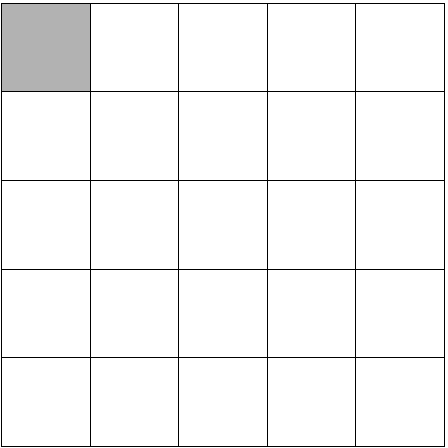_t}}}
\end{tabular}
\caption{Classical acute triangulations: subdivision (left --the initial triangulation
\itr{} is in bold), reproduction by symmetry
(center), and reproduction by translation (right). The greyed and dotted regions are referred to in
the proof, given in Section \ref{sec:imp.HMM}, of Theorem \ref{th:super_cv.tpfa}.}
\label{fig:classical_subd}
\end{figure}

\begin{theorem}[Super-convergence for TPFA on triangles]\label{th:super_cv.tpfa}~\\
Let the assumptions \eqref{assump}, $A=a{\rm Id}$
for some $a:\O\to (0,\infty)$, and that $d=2$ hold.
Also assume that \eqref{base} has the optimal $H^2$ regularity
property (see \eqref{assump.H2}), and that
$f\in H^1(\O)$ and $\bu$ is the solution to
\eqref{weak_state}. Let $\polyd$ be a classical TPFA triangulation of $\O$ in the
sense of Definition \ref{def:classical.tpfa.tri}.
If $u=(u_K)_{K\in\mesh}$ is the solution of the TPFA scheme on $\polyd$ then
there exists $C$, depending only on $\O$, $a$, and \itr{} such that
\be\label{sc-disc-group.tpfa}
\norm{u-\bu_\centers}{L^2(\O)}\le C\norm{f}{H^1(\O)}h_\mesh^{2}.
\ee
Here, $u$ is identified with a piecewise constant function on $\mesh$,
and $\bu_\centers$ is defined by 
\be\label{def.ucenters}
\forall \cell\in\mesh\,,\;\bu_\centers = \bu(\x_\cell)\mbox{ on $\cell$}.
\ee
\end{theorem}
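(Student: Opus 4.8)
\emph{Proof strategy.} The plan is to prove Theorem~\ref{th:super_cv.tpfa} \emph{indirectly}, by exploiting the identification of TPFA on acute triangulations with an HMM scheme whose cell points $\x_\cell$ are the circumcenters, and then invoking the patch-based $L^2$ estimate for HMM established in Section~\ref{sec:imp.HMM}. That estimate reduces the super-convergence of $\norm{u-\bu_\centers}{L^2(\O)}$ to a purely geometric question: it yields an $\mathcal O(h_\mesh^2)$ bound as soon as $\mesh$ can be partitioned into patches $\groupcell$ of diameter $\mathcal O(h_\mesh)$ on which a \emph{local compensation} between the circumcenters and the centers of mass holds, namely
\[
\sum_{\cell\in\groupcell}|\cell|\,(\x_\cell-\overline{\x}_\cell)=\mathbf 0,
\]
where $\overline{\x}_\cell$ denotes the center of mass of $\cell$. (The appendix result on the piecewise-constant approximation of $A=a\,{\rm Id}$ by the cell values $a_\cell$ is used first, so that the ``real'' TPFA scheme, which involves $a_\cell$, falls under this framework.) The Aubin--Nitsche-type dual solution $\varphi$ supplied by the improved $L^2$ estimate carries the $H^2$ regularity \eqref{assump.H2}, with $\norm{\varphi}{H^2}\lesssim\norm{u-\bu_\centers}{L^2(\O)}$, while $f\in H^1(\O)$ controls the variation of the source across each patch; these are exactly the two inputs that convert the raw $\mathcal O(h_\mesh)$ consistency term into an $\mathcal O(h_\mesh^2)$ one, once the compensation above cancels its leading contribution (the remaining pieces coming from the $\mathcal O(h_\mesh)$ variation of $\nabla\varphi$ and of $f$ over a patch).

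The core of the proof is therefore the construction of such patches for each of the three families of Definition~\ref{def:classical.tpfa.tri}, together with the verification of the compensation identity. The transparent case is \emph{reproduction by symmetry}: grouping the four reflected copies \itr, \itrh, \itrv, \itrr\ of each generating triangle into a block of side $\mathcal O(h_\mesh)$, the circumcenter and the center of mass both transform covariantly under the reflections, so the four displacement vectors $\x_\cell-\overline{\x}_\cell$ are reflections of one another and sum to $\mathbf 0$. For \emph{reproduction by translation} the patch is a single shrunk period (one copy of \itr), and the compensation is not a symmetry cancellation but a geometric identity on the periodic generating cell, which I would establish by reorganising $\sum_{\cell}\int_\cell(\x_\cell-\x)\d\x$ over the edges of the periodic pattern and using that, across each interior edge shared by two triangles, both circumcenters lie on the perpendicular bisector. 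For \emph{subdivision}, the patch is the family of sub-triangles produced inside one triangle of the initial mesh \itr; the self-similarity of the subdivision makes these sub-patches scaled copies of a fixed configuration, and the compensation reduces to the same identity verified on \itr.

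With the patches in hand, the two regimes of the statement are separated by a single question: whether complete patches tile all of $\O$. When they do---\emph{reproduction by translation}, and \emph{reproduction by an odd number of symmetries in each direction} (Figure~\ref{fig:classical_subd}, center, \emph{without} the dotted regions), configurations in which the patches tile $\O$ with no leftover cell---the compensation is global, and the residual term is bounded using only the $L^2$-based quantities $\norm{\varphi}{H^2}$ and $\norm{f}{H^1(\O)}$, producing \eqref{sc-disc-group.tpfa} with $\epsilon=0$. In the remaining configurations---\emph{subdivision}, and the symmetry constructions that retain the dotted boundary regions of Figure~\ref{fig:classical_subd}---a strip of width $\mathcal O(h_\mesh)$ along $\dr\O$ (or along the pattern mismatch) contains $\mathcal O(h_\mesh^{-1})$ cells that cannot be placed in a complete patch. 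Their contribution to the dual pairing is the uncompensated sum $\sum_{\rm strip}|\cell|\,|\x_\cell-\overline{\x}_\cell|\,|f_\cell|\,|\nabla\varphi(\x_\cell)|$, which I would estimate through the Sobolev embedding $H^2(\O)\hookrightarrow W^{1,p}(\O)$ (valid for every $p<\infty$ in $d=2$): H\"older on the strip produces a factor $|\text{strip}|^{1-1/p}\sim h_\mesh^{1-1/p}$, and the overall bound becomes $\mathcal O(h_\mesh^{2-1/p})$, so choosing $p=1/\epsilon$ gives $h_\mesh^{2-\epsilon}$.

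The main obstacle, carrying the genuine geometric content, is the compensation identity for the translation and subdivision families: unlike the symmetry case it does not follow from an elementary reflection argument, and establishing it requires the edge-based reorganisation on the periodic (respectively self-similar) generating cell together with the circumcenter orthogonality built into the TPFA admissibility. A secondary but delicate point is the sharp treatment of the boundary strip: the failure of the embedding $H^2\hookrightarrow W^{1,\infty}$ in dimension $2$ is precisely what prevents taking $\epsilon=0$ in general and forces the $p\to\infty$ trade-off, so the proof must count the uncompensated cells and control their displacements carefully enough to confine this loss to the $\mathcal O(h_\mesh^{-1})$ cells of a single boundary strip.
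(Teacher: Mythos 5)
Your overall route is the paper's own: identify TPFA on acute triangulations as an HMM scheme whose cell points are the circumcenters, invoke the patch-based estimate of Theorem \ref{th-supercv-HMM-group}, and reduce the statement to exhibiting patchings with vanishing (or $\mathcal O(h_\mesh^2)$) compensation. Your symmetry case (one patch per block of four reflected copies of \itr) and your translation case (one patch per shrunk copy of \itr, with the compensation proved by an edge-based identity exploiting the perpendicular-bisector characterisation of circumcenters) are exactly the paper's constructions --- the latter identity is Lemma \ref{lem:compensation.triangles} --- and your Sobolev-embedding/H\"older treatment of the uncovered $\mathcal O(h_\mesh)$-width strips is Remark \ref{rem:incomplete.group}.

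The genuine gap is the \emph{subdivision} case. You take as patch the set of all sub-triangles contained in one triangle $K_0$ of the initial triangulation \itr. But \itr{} is fixed while the subdivision is refined, so such a patch has diameter $\mathcal O(1)$ and ${\rm Card}(\patch)=\mathcal O(h_\mesh^{-2})$: the regularity factor $\mu_\groupcell$ of \eqref{def:muG} blows up, and Theorem \ref{th-supercv-HMM-group}, whose constant depends on an upper bound of $\theta_\polyd+\zeta_\disc+\mu_\groupcell$, cannot be invoked. This restriction is not cosmetic: in the proof of that theorem, the term $T_{2,2}$ is handled by replacing, on each cell $K$ of a patch, the average of $f\nabla\bv$ over $K$ by its average over $U_\patch$, with an error governed through Lemma \ref{lem.est.moy} by $\diam(U_\patch)^{d+1}/(|K|\,|B_\patch|)$. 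This factor is $\mathcal O(h_\mesh/|K|)$ when $\diam(U_\patch)\lesssim h_K$, but for your unit-size patches it is $\mathcal O(h_\mesh^{-2})$; after multiplication by $|K|\,|\e_\cell|$ and summation over the $\mathcal O(h_\mesh^{-2})$ cells of a patch, the contribution is $\mathcal O(h_\mesh^{-1})$ instead of $\mathcal O(h_\mesh^{2})$. Two further signs that this case has not been worked through: the compensation over a whole subdivided triangle is not the identity you claim, since the upward-pointing sub-triangles (all translates of each other, common displacement $\e$) outnumber the downward-pointing ones (displacement $-\e$) by $N$, so $\sum_{\cell}|\cell|\e_\cell=(|K_0|/N)\,\e\neq\mathbf 0$; and your patching of the subdivision family is a \emph{complete} partition of $\mesh$, which by your own logic would yield $\epsilon=0$ for subdivisions --- contradicting both the theorem you are proving and your own placement of subdivision in the $\epsilon$-loss category.

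What the paper does instead for subdivisions is different and elementary: it pairs adjacent sub-triangles into rhombuses. Within each rhombus the two triangles are images of each other under the point symmetry about the rhombus centre, hence have equal areas and opposite displacements, so $e_\groupcell=0$ with two-cell patches of diameter $\mathcal O(h_\mesh)$ and bounded $\mu_\groupcell$. The triangles that cannot be paired lie in bands of width $\mathcal O(h_\mesh)$ along the lines of \itr, and it is precisely this incomplete patching, handled by Remark \ref{rem:incomplete.group}, that produces the $h_\mesh^{2-\epsilon}$ rate for subdivision meshes. Repairing your proof requires substituting this (or an equivalent small-patch) construction for your whole-triangle patches; the rest of your argument then goes through as in the paper.
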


\section{The gradient discretisation method for elliptic PDEs} \label{sec:defGS}

In a nutshell, the gradient discretisation method (GDM) consists in writing a scheme
--called a gradient scheme-- by replacing 
the continuous space and operators by discrete counterparts in the weak formulation of the PDE. These discrete space and
operators are
provided by a gradient discretisation (GD).

\begin{definition}[Gradient discretisation]\label{def:GD}
A gradient discretisation (for homogeneous Dirichlet conditions) is
a triplet $\disc=(X_{\disc,0},\Pi_\disc,\nabla_\disc)$ of:
\begin{itemize}
\item A finite-dimensional space $X_{\disc,0}$ of degrees of freedom, that accounts for the
zero boundary condition,
\item A linear mapping $\Pi_\disc:X_{\disc,0} \rightarrow L^2(\O)$ which reconstructs a function from
the degrees of freedom,
\item A linear mapping $\nabla_\disc:X_{\disc,0} \rightarrow  L^2(\O)^d$ which reconstructs
a gradient from the degrees of freedom. It is chosen such that $\norm{\nabla_\disc\cdot}{L^2(\O)^d}$
is a norm on $X_{\disc,0}$.
\end{itemize}
\end{definition}

Given a gradient discretisation $\disc$, the corresponding  gradient scheme for \eqref{weak_state} is:
\be\label{base.GS}
\mbox{Find $u_\disc\in X_{\disc,0}$ such that, for all $v_\disc\in X_{\disc,0}$, }
a_{\disc}(u_{\disc}, v_{\disc}) =(f,\Pi_\disc v_\disc),
\ee
where 
\[
a_{\disc}(u_{\disc}, v_{\disc}) := \int_\O A\nabla_\disc u_\disc\cdot\nabla_\disc v_\disc\d\x.
\]

The accuracy of a gradient scheme is measured by three
quantities which are defined now.
The first one is a discrete Poincar\'e constant $C_\disc$, which ensures the \emph{coercivity}
of the method.
\be\label{def.CD}
C_\disc := \sup_{w\in X_{\disc,0}}\frac{\norm{\Pi_\disc w}{L^2(\O)}}{\norm{\nabla_\disc w}{L^2(\O)^d}}.
\ee
The second quantity is the interpolation error $S_\disc$, which
measures what is called the \emph{GD-consistency} of the gradient discretisation
(which is known as the interpolation error in the context of finite element methods).
\be\label{def.SD}
\forall \phi\in H^1_0(\O)\,,\;
S_\disc(\phi)=\min_{w\in X_{\disc,0}}\left(\norm{\Pi_\disc w-\phi}{L^2(\O)}
+\norm{\nabla_\disc w-\nabla\phi}{L^2(\O)^d}\right).
\ee
Finally, we measure the \emph{limit-conformity} (or defect of conformity)
of a gradient discretisation through $W_\disc$ defined by
\be\label{def.WD}
\forall \bpsi\in H_{\div}(\O)\,,\;
W_\disc(\bpsi)=\max_{w\in X_{\disc,0}}\frac{\left|  \tW_\disc(\bpsi, w)\right|}{\norm{\nabla_\disc w}{L^2(\O)^d}},
\ee
where $H_{\div}(\O)=\{\bpsi\in L^2(\O)^d\,:\,\div(\bpsi)\in L^2(\O)\}$ and
\be\label{def:tW}
\tW_\disc(\bpsi, w) =\dsp\int_\O \left( \Pi_\disc w \: \div(\bpsi)
+\nabla_\disc w\cdot\bpsi \right) \d\x.
\ee

Using these quantities, the following basic stability and error estimates can be established 
\cite{eym-12-sma,koala}.

\begin{theorem}\label{th:error.est.PDE} Let $\disc$ be a gradient discretisation, $\bu$ be the solution
to \eqref{weak_state} and $u_\disc$  be the solution to \eqref{base.GS}. Then
there exists $C>0$ depending only on $A$ and an upper bound of $C_\disc$ such that
\be\label{est.stability}
\norm{\Pi_\disc u_\disc}{L^2(\O)}+\norm{\nabla_\disc u_\disc}{L^2(\O)^d}
\le C\norm{f}{L^2(\O)}
\ee
and
\be\label{est.error.gs}
\norm{\Pi_\disc u_\disc-\bu}{L^2(\O)}+\norm{\nabla_\disc u_\disc-\nabla \bu}{L^2(\O)^d}
\le C \: \WS_\disc(\bu),
\ee
where 
\be \label{def.ws}
\WS_\disc(\bu)= W_\disc(A\nabla \bu)+S_\disc(\bu).
\ee
\end{theorem}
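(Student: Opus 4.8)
The plan is to handle the stability bound \eqref{est.stability} and the error bound \eqref{est.error.gs} separately, both powered by the coercivity coming from the uniform ellipticity of $A$. Write $\alpha>0$ for the ellipticity constant and $\Lambda$ for an $L^\infty$ bound on $A$, so that $\alpha|\xi|^2\le A(\x)\xi\cdot\xi$ and $|A(\x)\xi|\le\Lambda|\xi|$ for a.e.\ $\x$. For \eqref{est.stability} I would test \eqref{base.GS} with $v_\disc=u_\disc$: coercivity yields $\alpha\norm{\nabla_\disc u_\disc}{L^2(\O)^d}^2\le(f,\Pi_\disc u_\disc)$, and bounding the right-hand side by Cauchy--Schwarz followed by the discrete Poincar\'e inequality $\norm{\Pi_\disc u_\disc}{L^2(\O)}\le C_\disc\norm{\nabla_\disc u_\disc}{L^2(\O)^d}$ from \eqref{def.CD}, dividing through controls first the gradient and then the $L^2$ term by $C\norm{f}{L^2(\O)}$, with $C$ depending only on $\alpha$ and an upper bound on $C_\disc$.

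The core of the proof is \eqref{est.error.gs}, and the observation I would build on is that the continuous solution is admissible in the limit-conformity functional: since $\bu$ solves \eqref{weak_state}, the flux $A\nabla\bu$ lies in $H_{\div}(\O)$ with $\div(A\nabla\bu)=-f$. Substituting $\bpsi=A\nabla\bu$ in \eqref{def:tW} gives, for all $w\in X_{\disc,0}$,
\[
\int_\O A\nabla\bu\cdot\nabla_\disc w\d\x=(f,\Pi_\disc w)+\tW_\disc(A\nabla\bu,w).
\]
Subtracting this from \eqref{base.GS} tested against $w$ cancels the source term and produces the fundamental error identity
\[
\int_\O A(\nabla_\disc u_\disc-\nabla\bu)\cdot\nabla_\disc w\d\x=-\tW_\disc(A\nabla\bu,w)\qquad\forall w\in X_{\disc,0}.
\]

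With this identity I would run a discrete Strang-type argument. Pick $P_\disc\bu\in X_{\disc,0}$ nearly realising the minimum in \eqref{def.SD}, so $\norm{\Pi_\disc P_\disc\bu-\bu}{L^2(\O)}+\norm{\nabla_\disc P_\disc\bu-\nabla\bu}{L^2(\O)^d}$ is within an arbitrarily small slack of $S_\disc(\bu)$. Taking $w=u_\disc-P_\disc\bu$ and splitting $\nabla_\disc u_\disc-\nabla\bu=\nabla_\disc w+(\nabla_\disc P_\disc\bu-\nabla\bu)$, coercivity on the left and the error identity on the right give
\[
\alpha\norm{\nabla_\disc w}{L^2(\O)^d}^2\le-\tW_\disc(A\nabla\bu,w)-\int_\O A(\nabla_\disc P_\disc\bu-\nabla\bu)\cdot\nabla_\disc w\d\x.
\]
The first term is bounded by $W_\disc(A\nabla\bu)\norm{\nabla_\disc w}{L^2(\O)^d}$ via \eqref{def.WD}, the second by $\Lambda\,S_\disc(\bu)\norm{\nabla_\disc w}{L^2(\O)^d}$; dividing out one factor of $\norm{\nabla_\disc w}{L^2(\O)^d}$ yields $\norm{\nabla_\disc w}{L^2(\O)^d}\le\alpha^{-1}\WS_\disc(\bu)$ with $\WS_\disc(\bu)=W_\disc(A\nabla\bu)+S_\disc(\bu)$ as in \eqref{def.ws}. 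Two triangle inequalities then close the estimate: the gradient error is at most $\norm{\nabla_\disc w}{L^2(\O)^d}+S_\disc(\bu)$, and the $L^2$ error at most $\norm{\Pi_\disc w}{L^2(\O)}+S_\disc(\bu)\le C_\disc\norm{\nabla_\disc w}{L^2(\O)^d}+S_\disc(\bu)$ by \eqref{def.CD}, both $\mathcal O(\WS_\disc(\bu))$.

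The only genuinely delicate step is the integration by parts hidden in $\tW_\disc$: one must check that $A\nabla\bu\in H_{\div}(\O)$, which follows from the weak formulation \eqref{weak_state} alone and requires no extra regularity of $\bu$, so that $W_\disc(A\nabla\bu)$ is well defined and the identity above is legitimate. Everything else is routine manipulation with Cauchy--Schwarz, the coercivity constant $\alpha$, and the triangle inequality; in particular the symmetry of $A$ is never invoked, matching the remark after \eqref{assump}.
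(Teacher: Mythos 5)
Your proof is correct and follows exactly the standard argument that the paper itself relies on (the paper does not prove Theorem \ref{th:error.est.PDE} but cites \cite{eym-12-sma,koala} for it): the error identity obtained by inserting $\bpsi=A\nabla\bu$ into $\tW_\disc$, a Strang-type comparison with the (attained) minimiser of $S_\disc(\bu)$, coercivity of $A$, and the discrete Poincar\'e inequality \eqref{def.CD}. The only nitpick is that your intermediate bound should read $\norm{\nabla_\disc w}{L^2(\O)^d}\le\alpha^{-1}\max(1,\Lambda)\,\WS_\disc(\bu)$ rather than $\alpha^{-1}\WS_\disc(\bu)$, which is harmless since the final constant $C$ is allowed to depend on $A$.
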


\begin{remark}[Rates of convergence] For all classical low-order methods based on meshes,
$\mathcal O(h)$ estimates can be obtained, under classical regularity assumptions
on $A$ and $u$, on $W_\disc(A\nabla \bu)$ and $S_\disc(\bu)$ (see \cite{koala}).
The estimate \eqref{est.error.gs} then gives a linear rate of convergence
for these methods.
\label{rem.rates}
\end{remark}

\section{Improved $L^2$ error estimate for gradient schemes}\label{sec:improved.L2.GS}

As mentioned in the introduction, we follow the Aubin--Nitsche idea to establish
an improved $L^2$ error estimate for gradient schemes. We therefore need to define
the adjoint problem of \eqref{weak_state}, and consider its approximation by the GDM.

\medskip
The weak formulation for the dual problem with source term $g \in L^2(\O)$ is:
\begin{equation}\label{weak_adjoint}
\mbox{Find $\varphi_g \in H^1_0(\Omega)$ s.t., for all $w\in H^1_0(\O)$, }
a(w,\varphi_g)=(g,w).
\end{equation}

The gradient scheme corresponding to  \eqref{weak_adjoint} is stated as:
\be\label{adj.GS}
\mbox{Find $\varphi_{g,\disc} \in X_{\disc,0}$ s.t., for all $w_\disc\in X_{\disc,0}$, }
a_{\disc}(w_\disc, \varphi_{g,\disc})=(g, \Pi_\disc w_\disc).
\ee

In order to state the improved $L^2$ error estimate, we need to define
a measure of the interpolation error that, contrary to $S_\disc$, separates
the orders of approximation for the function and its gradient.
If $\alpha>0$, $\phi_\disc\in X_{\disc,0}$ and $\phi\in H^1_0(\O)$, let
\be\label{def.err.alpha}
\hI_{\disc,\alpha}(\phi,\phi_\disc)=\norm{\Pi_\disc \phi_\disc-\phi}{L^2(\O)}
+\alpha \norm{\nabla_\disc \phi_\disc-\nabla\phi}{L^2(\O)^d}.
\ee

\begin{theorem}[Improved $L^2$ error estimate for gradient schemes] 
\label{th-gs-super}  \qquad 

\noindent Assume \eqref{assump}, and  let $\bu$ be the solution to \eqref{weak_state}.
Let $\disc$ be a gradient discretisation in the sense of Definition \ref{def:GD},
and let $u_\disc$ be the solution to the gradient scheme \eqref{base.GS}.
Define
\[
g=\frac{\Pi_\disc u_\disc-\bu}{\norm{\Pi_\disc u_\disc -\bu}{L^2(\O)}}\in L^2(\O)
\]
and let $\varphi_g$ be the solution to \eqref{weak_adjoint}.
Let $\P_\disc:H^2(\O)\cap H^1_0(\O) \to X_{\disc,0}$ be a mapping that selects,
for each $\phi\in H^2(\O)\cap H^1_0(\O)$, an element $\P_\disc\phi\in X_{\disc,0}$.
Then, there exists
$C>0$ depending only on $\O$, $A$ and an upper bound of $C_\disc$ such that
\be\label{improved.gs}
\begin{aligned}
\Vert\Pi_\disc &u_\disc -\bu\Vert_{L^2(\O)}\\
\le{} C &\Big(\left[\alpha^{-1}\hI_{\disc,\alpha}(\bu,\P_\disc \bu)+\WS_\disc(\bu)\right]
\left[\alpha^{-1}\hI_{\disc,\alpha}(\varphi_g,\P_\disc \varphi_g)+\WS_\disc(\varphi_g)\right]\\
&+\hI_{\disc,\alpha}(\bu,\P_\disc \bu)
+\norm{f}{L^2(\O)}\hI_{\disc,\alpha}(\varphi_g,\P_\disc \varphi_g)\\
&+\left|\tW_\disc(A\nabla \bu,\P_\disc\varphi_g)\right|+
\left|\tW_\disc(A\nabla\varphi_g,\P_\disc \bu)\right|\Big),
\end{aligned}
\ee
where $\hI_{\disc,\alpha}$ is defined by \eqref{def.err.alpha},
$\WS_\disc$ is defined by \eqref{def.ws},
and $\tW_\disc$ is defined by \eqref{def:tW}.
\end{theorem}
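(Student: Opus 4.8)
\emph{Approach.} The plan is to run an Aubin--Nitsche duality argument adapted to the nonconforming setting of the GDM. Write $e:=\Pi_\disc u_\disc-\bu$; by the choice of $g$ one has $\norm{e}{L^2(\O)}=(g,e)=(g,\Pi_\disc u_\disc)-(g,\bu)$. The two structural ingredients are the dual gradient scheme \eqref{adj.GS}, whose solution I denote $\varphi_{g,\disc}$, and the two limit-conformity identities coming from $\div(A\nabla\bu)=-f$ and $\div(A\nabla\varphi_g)=-g$ (the latter being the diffusive flux of \eqref{weak_adjoint}): for every $w\in X_{\disc,0}$, the definition \eqref{def:tW} gives $\int_\O A\nabla\bu\cdot\nabla_\disc w=(f,\Pi_\disc w)+\tW_\disc(A\nabla\bu,w)$ and $\int_\O A\nabla\varphi_g\cdot\nabla_\disc w=(g,\Pi_\disc w)+\tW_\disc(A\nabla\varphi_g,w)$. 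These replace the unavailable Galerkin orthogonality, the residual being exactly the defect of discrete integration by parts measured by $\tW_\disc$.

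\emph{Main decomposition.} First I would split $\norm{e}{L^2(\O)}=(g,\Pi_\disc(u_\disc-\P_\disc\bu))+(g,\Pi_\disc\P_\disc\bu-\bu)$; since $\norm{g}{L^2(\O)}=1$, the last term is bounded by $\hI_{\disc,\alpha}(\bu,\P_\disc\bu)$. Applying the dual scheme \eqref{adj.GS} with test function $u_\disc-\P_\disc\bu$ turns the first term into $a_\disc(u_\disc-\P_\disc\bu,\varphi_{g,\disc})$, which I then split along $\varphi_{g,\disc}=\P_\disc\varphi_g+(\varphi_{g,\disc}-\P_\disc\varphi_g)$ into
\[
a_\disc(u_\disc-\P_\disc\bu,\P_\disc\varphi_g)+a_\disc(u_\disc-\P_\disc\bu,\varphi_{g,\disc}-\P_\disc\varphi_g).
\]
The second of these is the genuinely quadratic term: bounding $a_\disc$ by $\norm{A}{\infty}$ times the product of the two discrete-gradient norms, and controlling each factor by a triangle inequality together with the energy estimate \eqref{est.error.gs} (applied to the primal problem, and by the same argument to the dual problem solved by $\varphi_{g,\disc}$), yields $\norm{\nabla_\disc(u_\disc-\P_\disc\bu)}{}\le C[\WS_\disc(\bu)+\alpha^{-1}\hI_{\disc,\alpha}(\bu,\P_\disc\bu)]$ and its analogue for the dual factor. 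This produces the product $[\alpha^{-1}\hI_{\disc,\alpha}(\bu,\P_\disc\bu)+\WS_\disc(\bu)][\alpha^{-1}\hI_{\disc,\alpha}(\varphi_g,\P_\disc\varphi_g)+\WS_\disc(\varphi_g)]$ and, crucially, confines the discrete dual solution $\varphi_{g,\disc}$ to a single factor that the energy estimate removes from the final bound.

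\emph{The remaining term.} For $a_\disc(u_\disc-\P_\disc\bu,\P_\disc\varphi_g)=(f,\Pi_\disc\P_\disc\varphi_g)-a_\disc(\P_\disc\bu,\P_\disc\varphi_g)$ (using the primal scheme \eqref{base.GS} with test function $\P_\disc\varphi_g$), I would insert $\pm\nabla\bu$ and $\pm\nabla\varphi_g$ in the discrete energy product $a_\disc(\P_\disc\bu,\P_\disc\varphi_g)=\int_\O A\nabla_\disc\P_\disc\bu\cdot\nabla_\disc\P_\disc\varphi_g$, and convert each resulting continuous--discrete pairing through the two limit-conformity identities above and the weak formulations \eqref{weak_state}, \eqref{weak_adjoint}. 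This is precisely where the cross terms $\tW_\disc(A\nabla\bu,\P_\disc\varphi_g)$ and $\tW_\disc(A\nabla\varphi_g,\P_\disc\bu)$ appear; the leftover $L^2$ pairings $(f,\varphi_g-\Pi_\disc\P_\disc\varphi_g)$ and $(g,\bu-\Pi_\disc\P_\disc\bu)$, handled by Cauchy--Schwarz together with $\norm{g}{L^2(\O)}=1$ and the stability bound $\norm{\nabla\bu}{}\le C\norm{f}{L^2(\O)}$, contribute $\norm{f}{L^2(\O)}\hI_{\disc,\alpha}(\varphi_g,\P_\disc\varphi_g)$ and $\hI_{\disc,\alpha}(\bu,\P_\disc\bu)$; and the purely discrete-error cross product $\int_\O A(\nabla\bu-\nabla_\disc\P_\disc\bu)\cdot(\nabla\varphi_g-\nabla_\disc\P_\disc\varphi_g)$ is again of ``error $\times$ error'' type and absorbed into the quadratic term via $\norm{\nabla\bu-\nabla_\disc\P_\disc\bu}{}\le\alpha^{-1}\hI_{\disc,\alpha}(\bu,\P_\disc\bu)$. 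Collecting all contributions gives \eqref{improved.gs}.

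\emph{Main obstacle.} The delicate point is organisational rather than analytic: because $\Pi_\disc u_\disc\notin H^1_0(\O)$ and $\nabla_\disc$ is not a true gradient, there is no Galerkin orthogonality, so every passage between a discrete bilinear form and a continuous integral leaves a $\tW_\disc$ residual. The bookkeeping must be arranged so that exactly the two prescribed cross terms $\tW_\disc(A\nabla\bu,\P_\disc\varphi_g)$ and $\tW_\disc(A\nabla\varphi_g,\P_\disc\bu)$ survive, while every other residual is either paired against $f$ or $g$ to produce a single interpolation contribution or is of quadratic type and hence swallowed by the product term. Keeping $\varphi_{g,\disc}$ in one factor that can be eliminated by \eqref{est.error.gs} is what ultimately yields, as stated, a bound free of the discrete dual solution.
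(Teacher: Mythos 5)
Your proposal is correct, and it reaches \eqref{improved.gs} by a genuinely different decomposition than the paper's. The paper starts from the identity $(g,\bu-\Pi_\disc u_\disc)=a(\bu,\varphi_g)-a_\disc(u_\disc,\varphi_{g,\disc})$, invokes a standalone lemma bounding $|a(\psi,\phi)-a_\disc(\psi_\disc,\phi_\disc)|$ by the quantity $\eaaD_\disc$ of \eqref{def.ED}, and then splits $a_\disc(\P_\disc\bu,\P_\disc\varphi_g)-a_\disc(u_\disc,\varphi_{g,\disc})$ into three terms $T_1+T_2+T_3$, of which $T_2$ (and $T_3$ by primal/dual symmetry) requires a further sub-split and additional applications of that lemma and of the two discrete schemes. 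You instead peel off the interpolation error $(g,\Pi_\disc\P_\disc\bu-\bu)$ first, convert $(g,\Pi_\disc(u_\disc-\P_\disc\bu))$ through the dual scheme \eqref{adj.GS} into $a_\disc(u_\disc-\P_\disc\bu,\varphi_{g,\disc})$, and split only once along $\varphi_{g,\disc}=\P_\disc\varphi_g+(\varphi_{g,\disc}-\P_\disc\varphi_g)$: the second piece is exactly the paper's quadratic term $T_1$, and the first is expanded directly via the primal scheme \eqref{base.GS} and the two limit-conformity identities built from \eqref{def:tW} --- which amounts to inlining the proof of the paper's $\eaaD_\disc$-lemma rather than quoting it. What your route buys: the discrete dual solution $\varphi_{g,\disc}$ is confined to a single quadratic factor, eliminated in one stroke by \eqref{est.error.gs} and its dual analogue (valid since $A$ is symmetric, an analogue the paper also needs), so no nested $T_{2,1}/T_{2,2}$ splittings are required; moreover, if you track signs instead of bounding each pairing separately, the pairing $(f,\varphi_g-\Pi_\disc\P_\disc\varphi_g)$ cancels identically against $a(\bu,\varphi_g)=(f,\varphi_g)$, so your argument actually yields a marginally sharper bound in which $\norm{f}{L^2(\O)}\hI_{\disc,\alpha}(\varphi_g,\P_\disc\varphi_g)$ could be omitted (keeping it, as you do, is also valid since every term on the right-hand side of \eqref{improved.gs} is non-negative). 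What the paper's route buys is a reusable, self-contained estimate of the consistency error between $a$ and $a_\disc$, stated independently of the duality argument. One small remark: the stability bound $\norm{\nabla\bu}{L^2(\O)^d}\lesssim\norm{f}{L^2(\O)}$ that you invoke is superfluous; Cauchy--Schwarz with $\norm{f}{L^2(\O)}$ and $\norm{g}{L^2(\O)}=1$ already handles the leftover pairings.
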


\begin{remark}[Dominating terms] Following Remark \ref{rem.rates}, 
for low-order methods (with mesh size $h$) it is expected
that $\WS_\disc(\psi)=\mathcal O(h)$
if $\psi\in H^2(\O)$ and $A$ is Lipschitz-continuous. Hence, for a given gradient scheme,
Theorem \ref{th-gs-super} provides a super-convergence result, under the $H^2$ maximal
regularity, if we can find a mapping $\P_\disc$ (usually,
an interpolant) such that
$\hI_{\disc,h}(\psi,\P_\disc\psi)= \mathcal O(h^2)$ and
$\tW_\disc({\bpsi},\P_\disc\psi)=\mathcal O(h^2)$ for all $\psi\in H^2(\O)\cap H^1_0(\O)$
and all ${\bpsi}\in H^1(\O)^d$. This is the strategy
followed in Section \ref{sec:imp.HMM}
to establish super-convergence results for HMM schemes.
\end{remark}

\begin{remark}[$\mathbb{P}_1$ finite elements] 
Conforming and non-conforming $\mathbb{P}_1$ finite elements are gradient schemes \cite{koala,DEH15},
for which the basic estimate \eqref{est.error.gs} provides an $\mathcal O(h)$ rate
of convergence in $L^2$ norm. The improved estimate \eqref{improved.gs}
allows to recover, for these methods, the expected $\mathcal O(h^2)$ rate of
convergence.
\end{remark}




\subsection{Preliminary lemmas}

To prove Theorem \ref{th-gs-super}, we need two technical lemmas.
The first one measures the error committed when replacing the continuous
bilinear form $a(\cdot,\cdot)$ with the discrete bilinear form $a_\disc(\cdot,\cdot)$.

\begin{lemma} 
Let the assumption \eqref{assump} hold and let
$\psi, \phi \in H^1_0(\O)$ be such that $\div(A\nabla\psi)\in L^2(\O)$
and $\div(A\nabla\phi)\in L^2(\O)$.
Then, for any $\psi_\disc,\phi_\disc\in X_{\disc,0}$, it holds true that: 
\begin{align} \label{a_estimate}
|a(\psi, \phi) - a_\disc(\psi_\disc, \phi_\disc)| \le  \eaaD_\disc(\psi,\phi,\psi_\disc,\phi_\disc),
 \end{align}
 where 
\be\label{def.ED}
 \begin{aligned} 
   \eaaD_\disc(\psi,\phi,\psi_\disc,\phi_\disc)  ={}&  |\tW_\disc(A\nabla \psi, \phi_\disc)| +
 |\tW_\disc(A\nabla \phi, \psi_\disc)|  \\
&+ \norm{\div(A \nabla \phi)}{L^2(\O)} \hI_{\disc,\alpha}(\psi,\psi_\disc)
 \\
 &  + \norm{\div (A \nabla \psi)}{L^2(\O)} \hI_{\disc,\alpha}(\phi,\phi_\disc)  \\
& +
  \norm{A}{\infty}\alpha^{-2}\hI_{\disc,\alpha}(\psi,\psi_\disc) \hI_{\disc,\alpha}(\phi,\phi_\disc).  \\
  \end{aligned}
\ee
 \end{lemma}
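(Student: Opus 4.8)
The plan is to estimate the difference $a(\psi,\phi)-a_\disc(\psi_\disc,\phi_\disc)$ by inserting the ``mixed'' continuous/discrete quantities that connect the two bilinear forms. The natural bridge is provided by the limit-conformity defect $\tW_\disc$: recalling \eqref{def:tW}, for any $\bpsi\in H_{\div}(\O)$ and $w\in X_{\disc,0}$ we have $\int_\O \nabla_\disc w\cdot\bpsi\d\x = \tW_\disc(\bpsi,w)-\int_\O \Pi_\disc w\,\div(\bpsi)\d\x$. Applying this with $\bpsi=A\nabla\psi$ (which lies in $H_{\div}(\O)$ precisely because $\div(A\nabla\psi)\in L^2(\O)$) lets me rewrite $\int_\O A\nabla\psi\cdot\nabla_\disc\phi_\disc\d\x$ in a form that can be compared to $a(\psi,\phi)=\int_\O A\nabla\psi\cdot\nabla\phi\d\x$, trading the discrete gradient for the continuous one at the cost of a $\tW_\disc$ term and a term involving $\Pi_\disc\phi_\disc-\phi$ against $\div(A\nabla\psi)$.

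First I would write the telescoping decomposition
\[
a(\psi,\phi)-a_\disc(\psi_\disc,\phi_\disc)
=\int_\O A\nabla\psi\cdot\nabla\phi\d\x
-\int_\O A\nabla_\disc\psi_\disc\cdot\nabla_\disc\phi_\disc\d\x,
\]
and split the discrete term symmetrically by adding and subtracting a hybrid term such as $\int_\O A\nabla\psi\cdot\nabla_\disc\phi_\disc\d\x$. This produces one piece $\int_\O A(\nabla\psi-\nabla_\disc\psi_\disc)\cdot\nabla_\disc\phi_\disc\d\x$ and the piece $\int_\O A\nabla\psi\cdot(\nabla\phi-\nabla_\disc\phi_\disc)\d\x$. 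On the second piece I apply the $\tW_\disc$ identity above with $\bpsi=A\nabla\psi$: the continuous part $\int_\O A\nabla\psi\cdot\nabla\phi\d\x=a(\psi,\phi)$ combined with the integration-by-parts relation for the exact solution cancels against the leading contribution, leaving exactly $\tW_\disc(A\nabla\psi,\phi_\disc)$ plus $\int_\O \div(A\nabla\psi)(\Pi_\disc\phi_\disc-\phi)\d\x$; this last integral is bounded by $\norm{\div(A\nabla\psi)}{L^2(\O)}\,\norm{\Pi_\disc\phi_\disc-\phi}{L^2(\O)}\le\norm{\div(A\nabla\psi)}{L^2(\O)}\,\hI_{\disc,\alpha}(\phi,\phi_\disc)$ by \eqref{def.err.alpha}. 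By symmetry, the same treatment applied to the remaining hybrid discrepancy contributes the terms $|\tW_\disc(A\nabla\phi,\psi_\disc)|$ and $\norm{\div(A\nabla\phi)}{L^2(\O)}\,\hI_{\disc,\alpha}(\psi,\psi_\disc)$.

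The leftover genuinely quadratic term is of the form $\int_\O A(\nabla_\disc\psi_\disc-\nabla\psi)\cdot(\nabla_\disc\phi_\disc-\nabla\phi)\d\x$, which is controlled by Cauchy--Schwarz and the boundedness of $A$: it is at most $\norm{A}{\infty}\,\norm{\nabla_\disc\psi_\disc-\nabla\psi}{L^2(\O)^d}\,\norm{\nabla_\disc\phi_\disc-\nabla\phi}{L^2(\O)^d}$. Each gradient factor is bounded by $\alpha^{-1}\hI_{\disc,\alpha}(\cdot,\cdot)$ directly from the definition \eqref{def.err.alpha}, which produces precisely the $\norm{A}{\infty}\alpha^{-2}\hI_{\disc,\alpha}(\psi,\psi_\disc)\hI_{\disc,\alpha}(\phi,\phi_\disc)$ term. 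Collecting the five contributions gives the bound $\eaaD_\disc$ of \eqref{def.ED}.

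The main obstacle, and the step that requires care, is the bookkeeping of the $\tW_\disc$ identity: one must choose the hybrid insertions so that the continuous bilinear form $a(\psi,\phi)$ and the two instances of $-\int_\O\Pi_\disc w\,\div(A\nabla\cdot)\d\x$ cancel consistently, leaving only the asymmetric $\tW_\disc$ remainders and the function-error products, rather than spurious uncancelled terms. A clean way to guarantee this is to perform a single symmetric splitting (adding and subtracting the fully hybrid quantity $\int_\O A\nabla\psi\cdot\nabla_\disc\phi_\disc\d\x$ or its transpose) and then invoke the $\tW_\disc$ identity exactly once on each resulting cross term, so that the algebra is forced into the five named pieces. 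Everything else is Cauchy--Schwarz together with the definition of $\hI_{\disc,\alpha}$.
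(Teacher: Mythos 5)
Your proposal is correct and takes essentially the same route as the paper's own proof: the same splitting into the two cross terms $\int_\O A\nabla\psi\cdot(\nabla\phi-\nabla_\disc\phi_\disc)\d\x$ and $\int_\O A(\nabla\psi-\nabla_\disc\psi_\disc)\cdot\nabla\phi\d\x$ plus the quadratic remainder $\int_\O A(\nabla_\disc\psi_\disc-\nabla\psi)\cdot(\nabla_\disc\phi_\disc-\nabla\phi)\d\x$, with each cross term rewritten via Stokes' formula and the definition of $\tW_\disc$ to produce the $\tW_\disc$ and divergence terms, and the remainder bounded by Cauchy--Schwarz together with the definition of $\hI_{\disc,\alpha}$. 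The bookkeeping you describe works out exactly as claimed, so there is no gap.
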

 \begin{proof} 
 Let 
 \begin{align*}
 T & = a(\psi, \phi) - a_\disc(\psi_\disc, \phi_\disc) \\
 & = \int_\O A \nabla \psi \cdot \nabla \phi \d\x - \int_\O A \nabla_\disc \psi_\disc \cdot \nabla_\disc \phi_\disc  \d\x.
 \end{align*}
 Introduce  $\nabla_\disc \psi_\disc$ in the first term in the above integral to obtain 
 \begin{align*}
 T & =\int_\O A (\nabla \psi-\nabla_\disc \psi_\disc ) \cdot \nabla \phi \d\x + 
\int_\O A \nabla_\disc \psi_\disc \cdot (\nabla \phi -\nabla_\disc\phi_\disc) \d\x.
 \end{align*}
 Now introduce $\nabla \psi$ in the second term to obtain
 \begin{align} 
 T ={}&\int_\O A (\nabla \psi-\nabla_\disc \psi_\disc) \cdot \nabla \phi \d\x\nonumber\\
& + \int_\O A (\nabla_\disc \psi_\disc - \nabla \psi) \cdot (\nabla \phi -\nabla_\disc\phi_\disc) \d\x \nonumber\\
 & +\int_\O A  \nabla \psi \cdot (\nabla \phi -\nabla_\disc \phi_\disc ) \d\x=: T_1 + T_2 + T_3.
\label{a_T}
 \end{align}
 The term $T_1$ is re-written as
 \begin{align} 
 T_1  & = \int_\O A \nabla \psi \cdot \nabla \phi \d\x  - \int_\O A \nabla_\disc \psi_\disc 
\cdot \nabla \phi \d\x  \nonumber \\
 & = - \int_\O \psi \div(A \nabla \phi) \d\x - \tW_\disc(A \nabla \phi, \psi_\disc) 
 + \int_\O  \div(A \nabla \phi)  \Pi_\disc \psi_\disc \d\x, \label{T1.sym}
 \end{align}
and this leads to
 \begin{align} \label{a_T1}
 |T_1| & \le |\tW_\disc(A \nabla \phi, \psi_\disc)|  + \norm{\div(A \nabla \phi)}{L^2(\O)}
 \norm{\psi - \Pi_\disc \psi_\disc}{L^2(\O)} \nonumber \\
 & \le |\tW_\disc(A \nabla \phi, \psi_\disc)| + \norm{\div(A \nabla \phi)}{L^2(\O)} \hI_{\disc,\alpha}(\psi,\psi_\disc).
 \end{align}
 The term $T_2$ is estimated as 
 \begin{align} \label{a_T2}
 |T_2| & \le \norm{A}{\infty} \norm{\nabla \psi - \nabla_\disc \psi_\disc}{L^2(\O)^d}
 \norm{\nabla \phi - \nabla_\disc \phi_\disc}{L^2(\O)^d} \nonumber \\
 & \le\norm{A}{\infty}\alpha^{-2}\hI_{\disc,\alpha}(\psi,\psi_\disc) \hI_{\disc,\alpha}(\phi,\phi_\disc).
  \end{align}
  The term $T_3$ is estimated exactly as $T_1$ interchanging the roles of $(\psi, \psi_\disc)$ and 
$(\phi,\phi_\disc)$, which leads to
   \begin{align} \label{a_T3}
 |T_3| & \le |\tW_\disc(A \nabla \psi, \phi_\disc)|  + \norm{\div(A \nabla \psi)}{L^2(\O)}
  \hI_{\disc,\alpha}(\phi,\phi_\disc).
 \end{align}
  A substitution of \eqref{a_T1}-\eqref{a_T3} in  \eqref{a_T} yields the required estimate in 
  \eqref{a_estimate}.
 \end{proof}

The following trivial lemma will enable us to evaluate the distance
between $P_\disc \bu$ and $u_\disc$ (and similar for $\varphi_g$) in the proof of Theorem
\ref{th-gs-super}.

 \begin{lemma} \label{intermediate}
Under the assumption \eqref{assump}, let $\bu$  be the solution to \eqref{weak_state}.
Let $\disc$ be a gradient discretisation in the sense of Definition \ref{def:GD},
and denote the solution to the corresponding gradient scheme \eqref{base.GS}  by $u_\disc$.
Then, there exists $C>0$ depending only on $A$ and an upper bound of $C_\disc$
such that, for all $v_\disc\in X_{\disc,0}$,
\be\label{interm.Pi}
\norm{\Pi_\disc v_\disc - \Pi_\disc u_\disc}{L^2(\O)}
 \le  \hI_{\disc,\alpha}(\bu,v_\disc)+C \, \WS_\disc(\bu),
\ee
and
\be\label{interm.grad}
\norm{\nabla_\disc v_\disc - \nabla_\disc u_\disc}{L^2(\O)^d}\\
 \le \alpha^{-1} \hI_{\disc,\alpha}(\bu,v_\disc)+C \, \WS_\disc(\bu).
\ee
\end{lemma}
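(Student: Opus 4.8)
The plan is to exploit the fact that $u_\disc$ and $\bu$ are both ``solutions'' to closely related problems, and to estimate the difference $\Pi_\disc v_\disc - \Pi_\disc u_\disc$ (resp.\ the gradient difference) by inserting $\Pi_\disc v_\disc - \Pi_\disc u_\disc$ (resp.\ $\nabla_\disc v_\disc-\nabla_\disc u_\disc$) and using the coercivity of the discrete bilinear form together with the error estimate \eqref{est.error.gs}. Concretely, I would first recall from Theorem \ref{th:error.est.PDE} that
\[
\norm{\Pi_\disc u_\disc-\bu}{L^2(\O)}+\norm{\nabla_\disc u_\disc-\nabla \bu}{L^2(\O)^d}\le C\,\WS_\disc(\bu),
\]
so that the pair $(u_\disc$, $\bu)$ is controlled in both norms by $\WS_\disc(\bu)$. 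This is the one nontrivial ingredient; everything else should be a triangle-inequality manipulation.

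For the $L^2$ bound \eqref{interm.Pi}, I would write, by the triangle inequality,
\[
\norm{\Pi_\disc v_\disc - \Pi_\disc u_\disc}{L^2(\O)}
\le \norm{\Pi_\disc v_\disc - \bu}{L^2(\O)}
+\norm{\bu - \Pi_\disc u_\disc}{L^2(\O)}.
\]
The first term on the right is bounded by $\hI_{\disc,\alpha}(\bu,v_\disc)$ directly from the definition \eqref{def.err.alpha}, since $\hI_{\disc,\alpha}(\bu,v_\disc)=\norm{\Pi_\disc v_\disc-\bu}{L^2(\O)}+\alpha\norm{\nabla_\disc v_\disc-\nabla\bu}{L^2(\O)^d}\ge \norm{\Pi_\disc v_\disc-\bu}{L^2(\O)}$. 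The second term is bounded by $C\,\WS_\disc(\bu)$ thanks to \eqref{est.error.gs}. Combining the two gives \eqref{interm.Pi}.

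For the gradient bound \eqref{interm.grad}, the argument is identical in structure: splitting via $\nabla\bu$,
\[
\norm{\nabla_\disc v_\disc - \nabla_\disc u_\disc}{L^2(\O)^d}
\le \norm{\nabla_\disc v_\disc - \nabla\bu}{L^2(\O)^d}
+\norm{\nabla\bu - \nabla_\disc u_\disc}{L^2(\O)^d}.
\]
Here the first term equals $\alpha^{-1}\bigl(\alpha\norm{\nabla_\disc v_\disc-\nabla\bu}{L^2(\O)^d}\bigr)\le \alpha^{-1}\hI_{\disc,\alpha}(\bu,v_\disc)$ by \eqref{def.err.alpha}, and the second is again $\le C\,\WS_\disc(\bu)$ by \eqref{est.error.gs}. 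Since the lemma is explicitly described as ``trivial,'' I do not anticipate any real obstacle; the only point requiring minor care is bookkeeping the factor $\alpha^{-1}$ consistently between the two estimates, which arises purely from how the weight $\alpha$ is attached to the gradient term in the definition of $\hI_{\disc,\alpha}$.
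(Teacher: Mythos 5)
Your proof is correct and follows exactly the paper's own argument: a triangle inequality introducing $\bu$ (resp.\ $\nabla\bu$) as an intermediate term, then bounding one piece by the definition \eqref{def.err.alpha} of $\hI_{\disc,\alpha}$ and the other by the basic error estimate \eqref{est.error.gs}. The opening mention of ``coercivity of the discrete bilinear form'' is unnecessary (and unused in your actual argument), but the proof itself matches the paper's.
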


\begin{remark} Estimate \eqref{interm.Pi} will not be used in the sequel, but
is stated for the sake of completeness since it is required when dealing with PDEs with
lower order terms such as $-\div(A\nabla \bu)+\bu=f$.
\end{remark}

\begin{proof} A use of triangle inequality after introducing $\bu$ as an intermediate term, along with the definition of $\hI_{\disc,\alpha}(\bu,v_\disc)$ and the estimate \eqref{est.error.gs} in Theorem \ref{th:error.est.PDE}, yields \eqref{interm.Pi}. Similarly, \eqref{interm.grad} can be established by introducing $\nabla \bu$ as an intermediate term. 
\end{proof}

\subsection{Proof of the improved $L^2$ estimate}

We now turn to the proof of Theorem \ref{th-gs-super}.

From \eqref{weak_adjoint} with $w=\bu$ and \eqref{adj.GS} with $w_\disc=u_\disc$,
\be \label{proof.1}
(g, \bu - \Pi_\disc u_\disc)  =a(\bu, \varphi_g) - a_\disc(u_\disc, \varphi_{g,\disc}).
\ee
Since $\div(A\nabla \bu)=-f\in L^2(\O)$ and $\div(A\nabla\varphi_g)=-g\in L^2(\O)$,
a use of \eqref{a_estimate} in \eqref{proof.1} leads to 
\begin{align}
\Vert \bu-\Pi_\disc u_\disc\Vert_{L^2(\O)}
&=(g, \bu - \Pi_\disc u_\disc) \nonumber\\
& \le a_\disc(\P_\disc \bu, \P_\disc \varphi_g) - a_\disc(u_\disc, \varphi_{g,\disc}) +  \eaaD_\disc(\bu,\varphi_g,\P_\disc \bu,\P_\disc \varphi_g) \nonumber \\
 & = a_\disc(\P_\disc \bu - u_\disc, \P_\disc \varphi_g - \varphi_{g,\disc}) +
  a_\disc( u_\disc, \P_\disc \varphi_g - \varphi_{g,\disc}) \nonumber \\
  & \quad +  a_\disc(\P_\disc \bu - u_\disc,  \varphi_{g,\disc})  +
  \eaaD_\disc(\bu,\varphi_g,\P_\disc \bu,\P_\disc \varphi_g)  \nonumber \\
 & =: T_1 + T_2 + T_3 +  \eaaD_\disc(\bu,\varphi_g,\P_\disc \bu,\P_\disc \varphi_g).
 \label{proof.2}
\end{align}
In the rest of this proof, we denote $\mathcal A\lesssim \mathcal B$ for
$\mathcal A\le C\mathcal B$ with $C$ depending only on $\O$, $A$ and an upper
bound of $C_\disc$.
Using the Cauchy--Schwarz inequality and \eqref{interm.grad},
the term $T_1$ can be estimated as
\begin{align}
|T_1| \lesssim{}& \norm{\nabla_\disc \P_\disc \bu-\nabla_\disc u_\disc}{L^2(\O)^d}
\norm{\nabla_\disc \P_\disc \varphi_g-\nabla_\disc \varphi_{g,\disc}}{L^2(\O)^d}\nonumber\\
\lesssim{}& \left[\alpha^{-1}\hI_{\disc,\alpha}(\bu,\P_\disc \bu)+\WS_\disc(\bu)\right]
\left[\alpha^{-1}\hI_{\disc,\alpha}(\varphi_g,\P_\disc \varphi_g)+\WS_\disc(\varphi_g)\right].
 \label{T1}
\end{align}
Consider the term $T_2$ now. Simple manipulations and a use of \eqref{adj.GS} lead to 
\begin{align} 
T_2  ={}& a_\disc(u_\disc, \P_\disc \varphi_g) -  a_\disc(u_\disc,  \varphi_{g,\disc}) \nonumber \\
 ={}& - a_\disc(\P_\disc \bu- u_\disc, \P_\disc \varphi_g) + a_\disc(\P_\disc \bu-u_\disc,  \varphi_{g,\disc})
+ a_\disc(\P_\disc \bu, \P_\disc \varphi_g-\varphi_{g,\disc}) \nonumber \\
 ={}& -\big[a_\disc(\P_\disc \bu- u_\disc, \P_\disc \varphi_g) -
(g, \Pi_\disc(\P_\disc \bu - u_\disc))\big]   + a_\disc(\P_\disc \bu, \P_\disc \varphi_g -\varphi_{g,\disc})\nonumber\\
={}&-T_{2,1}+T_{2,2}.
\label{T2}
\end{align}
Since $-\div(A\nabla\varphi_g)=g$, we write
\begin{align*}
T_{2,1}={}&\int_\Omega \left[ \nabla_\disc (\P_\disc \bu- u_\disc) \cdot A\nabla \varphi_g
- g \Pi_\disc(\P_\disc \bu-u_\disc)\right] \d\x\\
&+ 
\int_\O A \nabla_\disc (\P_\disc \bu- u_\disc) \cdot (\nabla_\disc \P_\disc \varphi_g - \nabla\varphi_g) \d\x\\
={}&\tW_\disc(A\nabla\varphi_g,\P_\disc \bu-u_\disc)
+\int_\O A \nabla_\disc (\P_\disc \bu- u_\disc) \cdot (\nabla_\disc \P_\disc \varphi_g - \nabla\varphi_g) \d\x.
\end{align*}
We then use \eqref{def.WD} and \eqref{interm.grad} and
the definition of $\hI_{\disc,\alpha}(\varphi_g,\P_\disc\varphi_g)$ to obtain
\begin{align}
|T_{2,1}|\lesssim{}& W_\disc(A\nabla\varphi_g)\norm{\nabla_\disc \P_\disc \bu-\nabla_\disc u_\disc}{L^2(\O)^d}\nonumber\\
&+\norm{\nabla_\disc \P_\disc \bu-\nabla_\disc u_\disc}{L^2(\O)^d}
\norm{\nabla_\disc \P_\disc \varphi_g-\nabla \varphi_g}{L^2(\O)^d}\nonumber\\
\lesssim{}& \left[\alpha^{-1}\hI_{\disc,\alpha}(\varphi_g,\P_\disc\varphi_g)+
\WS_\disc(\varphi_g)\right]
\left[\alpha^{-1}\hI_{\disc,\alpha}(\bu,\P_\disc \bu)+\WS_\disc(\bu)\right].
\label{T21}
\end{align}
{}From \eqref{a_estimate}, \eqref{weak_adjoint} and \eqref{adj.GS}, the term $T_{2,2}$
can be estimated as follows:
\begin{align} \label{T22}
|T_{2,2}| &\le |a(\bu,\varphi_g) -a_\disc(\P_\disc \bu , \varphi_{g,\disc}) |
+  \eaaD_\disc(\bu,\varphi_g,\P_\disc \bu,\P_\disc\varphi_g)  \nonumber \\
 &\le  |(\bu - \Pi_\disc \P_\disc \bu,g )| +  \eaaD_\disc(\bu,\varphi_g,\P_\disc \bu,\P_\disc\varphi_g) \nonumber \\
 & \le \norm{g}{L^2(\O)}\hI_{\disc,\alpha}(\bu,\P_\disc \bu)+ \eaaD_\disc(\bu,\varphi_g,\P_\disc \bu,\P_\disc\varphi_g).
\end{align}
A substitution of \eqref{T21} and \eqref{T22} into \eqref{T2} leads to an estimate for $T_2$:
\begin{align}
|T_2|\lesssim{}&\left[\alpha^{-1}\hI_{\disc,\alpha}(\varphi_g,\P_\disc\varphi_g)+
\WS_\disc(\varphi_g)\right]
\left[\alpha^{-1}\hI_{\disc,\alpha}(\bu,\P_\disc \bu)+\WS_\disc(\bu)\right]\nonumber\\
&+\norm{g}{L^2(\O)}\hI_{\disc,\alpha}(\bu,\P_\disc \bu)+ \eaaD_\disc(\bu,\varphi_g,\P_\disc \bu,\P_\disc\varphi_g).
\label{est.T2}
\end{align}
The term $T_3$ is similar to $T_2$, upon swapping the primal and dual problems (both continuous
and discrete), that is $(f,\bu,u_\disc,g,\varphi_g,\varphi_{g,\disc})
\leftrightarrow (g,\varphi_g,\varphi_{g,\disc},f,\bu,u_\disc)$. Hence, with these substitutions in
\eqref{est.T2}, we see that 
\begin{align}
|T_3|\lesssim{}&\left[\alpha^{-1}\hI_{\disc,\alpha}(\varphi_g,\P_\disc\varphi_g)+
\WS_\disc(\varphi_g)\right]
\left[\alpha^{-1}\hI_{\disc,\alpha}(\bu,\P_\disc \bu)+\WS_\disc(\bu)\right]\nonumber\\
&+\norm{f}{L^2(\O)}\hI_{\disc,\alpha}(\varphi_g,\P_\disc \varphi_g)+ \eaaD_\disc(\bu,\varphi_g,\P_\disc \bu,\P_\disc\varphi_g).
\label{est.T3}
\end{align}
Since $\norm{g}{L^2(\O)}=1$, a substitution in \eqref{proof.2} of the estimates \eqref{T1},
\eqref{est.T2} and \eqref{est.T3} for $T_1$, $T_2$ and $T_3$
leads to
\begin{align*}
\Vert \bu-&\Pi_\disc u_\disc\Vert_{L^2(\O)}\\
\lesssim{}& \left[\alpha^{-1}\hI_{\disc,\alpha}(\bu,\P_\disc \bu)+\WS_\disc(\bu)\right] \left[\alpha^{-1}\hI_{\disc,\alpha}(\varphi_g,\P_\disc\varphi_g)+
\WS_\disc(\varphi_g)\right]\\
&+\hI_{\disc,\alpha}(\bu,\P_\disc \bu)+\norm{f}{L^2(\O)}\hI_{\disc,\alpha}(\varphi_g,\P_\disc \varphi_g)+
\eaaD_\disc(\bu,\varphi_g,\P_\disc \bu,\P_\disc\varphi_g).
\end{align*}
The proof of Theorem \ref{th-gs-super}
is complete by recalling the definition \eqref{def.ED} of $\eaaD_\disc$,
by using
\begin{multline*}
\alpha^{-2}\hI_{\disc,\alpha}(\bu,\P_\disc \bu)
\hI_{\disc,\alpha}(\varphi_g,\P_\disc\varphi_g)\\
\le \left[\alpha^{-1}\hI_{\disc,\alpha}(\bu,\P_\disc \bu)+\WS_\disc(\bu)\right]
\left[\alpha^{-1}\hI_{\disc,\alpha}(\varphi_g,\P_\disc\varphi_g)+
\WS_\disc(\varphi_g)\right],
\end{multline*}
and by noticing that $-\div(A\nabla\varphi_g)=g$ (that has $L^2(\O)$ norm equal to $1$)
and that $-\div(A\nabla \bu)=f$.

\begin{remark}
The symmetry of $A$, assumed in \eqref{assump}, is not a restrictive hypothesis
in applications. However, Theorem \ref{th-gs-super} and all
subsequent results in this paper are valid if $A$ is not symmetric. In some terms,
$A$ simply needs to be replaced with $A^T$: in $\tW(A\nabla\varphi_g,\P_\disc\bu)$
in \eqref{improved.gs}; in $\tW(A\nabla\phi,\psi_\disc)$
and $\div(A\nabla\phi)$ in \eqref{def.ED} and \eqref{T1.sym}; etc.
\end{remark}

\section{Super-convergence for a modified HMM scheme}\label{sec:HMM}

Here, we recall some notations and the definition of the HMM scheme, which is
a gradient scheme for a specific gradient discretisation.
Then, we design a \emph{modified HMM scheme} with a better quadrature rule for the source term
and we prove that this modified scheme super-converges, without any assumption on the mesh or
regularity assumption on $f$. In the next section, we use this super-convergence of
the modified HMM scheme to obtain, under the assumption that $f\in H^1(\O)$, a super-convergence for HMM schemes in the case where, on average on patches of cells, the ``cell points'' $\centers$
of Definition \ref{def:polymesh} are not far from the centers of mass of the cells.

\subsection{Polytopal meshes and definition of the HMM gradient discretisation}\label{sec:poly.def.hmm}

Let us recall the definition of the gradient discretisations
that correspond to HMM schemes, starting with the definition
of a polytopal mesh (we follow \cite{DEH15}, without including the vertices which are
not useful to our purpose).

\begin{definition}[Polytopal mesh]\label{def:polymesh}~
Let $\Omega$ be a bounded polytopal open subset of $\R^d$ ($d\ge 1$). 
A polytopal mesh of $\O$ is $\polyd = (\mesh,\edges,\centers)$, where:
\begin{enumerate}
\item $\mesh$ is a finite family of non empty connected polytopal open disjoint subsets of $\O$ (the cells) such that $\overline{\O}= \dsp{\cup_{\cell \in \mesh} \overline{\cell}}$.
For any $\cell\in\mesh$, $|\cell|>0$ is the measure of $\cell$ and $h_\cell$ denotes the diameter of $\cell$.

\item $\edges$ is a finite family of disjoint subsets of $\overline{\O}$ (the edges of the mesh in 2D,
the faces in 3D), such that any $\edge\in\edges$ is a non empty open subset of a hyperplane of $\R^d$ and $\edge\subset \overline{\O}$.
Assume that for all $\cell \in \mesh$ there exists  a subset $\edgescv$ of $\edges$
such that the boundary of $K$ is ${\bigcup_{\edge \in \edgescv}} \overline{\edge}$. 
We then denote by $\mesh_\edge = \{\cell\in\mesh\,:\,\edge\in\edgescv\}$, 
and assume that, for all $\edge\in\edges$, $\mesh_\edge$ has exactly one element
and $\edge\subset\partial\O$, or $\mesh_\edge$ has two elements and
$\edge\subset\O$. 
Let $\edgesint$ be the set of all interior faces, i.e. $\edge\in\edges$ such that $\edge\subset \O$, and $\edgesext$ the set of boundary
faces, i.e. $\edge\in\edges$ such that $\edge\subset \dr\O$.
The $(d-1)$-dimensional measure of $\edge\in\edges$ is $|\edge|$,
and its center of mass is $\centeredge$.

\item $\centers = (\x_\cell)_{\cell \in \mesh}$ is a family of points of $\O$ indexed by $\mesh$ and such that, for all  $\cell\in\mesh$,  $\x_\cell\in \cell$.
Assume that any cell $\cell\in\mesh$ is strictly $\x_\cell$-star-shaped, meaning that 
if $\x\in\overline{\cell}$ then the line segment $[\x_\cell,\x)$ is included in $\cell$.

\end{enumerate}
\end{definition}

For all $\cell \in \mesh$, denote the center
of mass of $K$ by $\overline{\x}_\cell$ and, if $\edge \in \edgescv$, denote the (constant) unit vector normal to $\edge$ outward to $\cell$ by $\bfn_{K,\edge}$.
Also, let ${\rm d}_{\cell,\edge}$ be the signed orthogonal
distance between $\x_\cell$ and $\edge$ (see Fig. \ref{fig.dksigma}), that is: 
\begin{equation}
 {\rm d}_{\cell,\edge} = (\x - \x_\cell) \cdot \ncvedge\,\quad\forall \x \in \edge \label{def.dcvedge}
\end{equation}
(note that $(\x - \x_\cell) \cdot \ncvedge$ is constant for $\x \in \edge$).
The fact that $\cell$ is strictly star-shaped with respect to $\x_\cell$ is equivalent
to ${\rm d}_{\cell,\edge}> 0$ for all $\edge\in\edgescv$.
For all $\cell\in\mesh$ and $\edge\in\edgescv$, we denote by $D_{\cell,\edge}$ the cone with apex $\x_\cell$ and base $\edge$, that is $D_{\cell,\edge}=\{ t \x_\cell +(1-t) \y\,:\, t\in (0,1),\,
\y\in \edge\}$.

The size of the discretisation is $h_\mesh=\sup\{h_\cell\,:\; \cell\in \mesh\}$ and the
regularity factor is
\be\label{def:theta}
\theta_\polyd = \max_{\edge\in\edgesint\,,\;\mesh_\edge=\{\cell,\cell'\}} \frac {{\rm d}_{\cell,\edge}} {{\rm d}_{\cell',\edge}}
+ \max_{\cell\in\mesh}
\left(\max_{\edge\in\edgescv}\frac{h_\cell}{d_{\cell,\edge}}+{\rm Card}(\edgescv)\right).
\ee
An upper bound on $\theta_\polyd$ imposes three geometrical conditions:
the orthogonal distance $d_{K,\edge}$ between $\x_K$ and $\edge\in\edgescv$ must be comparable
to the diameter $h_K$ of $K$; the orthogonal distances $d_{K,\edge}$ and $d_{K',\edge}$
between $\sigma$ and
its two neighbouring cell points $\x_K$ and $\x_{K'}$ must have similar magnitudes
(see Figure \ref{fig.dksigma});
and there is a global upper bound on the number of faces of each
cell.

\begin{figure}[htb]
\begin{center}
\resizebox{.5\textwidth}{!}{\input{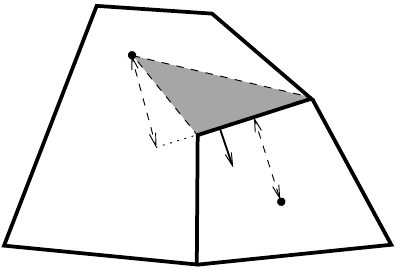_t}}
\caption{A cell $\cell$ of a polytopal mesh.}
\label{fig.dksigma}
\end{center}
\end{figure}

\begin{definition}[HMM gradient discretisation]\label{def.HMM.GD}
Let $\polyd$ be a polytopal mesh of $\O$ as per Definition \ref{def:polymesh}.
An HMM gradient discretisation $\disc=(X_{\disc,0},\Pi_\disc,\nabla_\disc)$ is
defined by:
\begin{enumerate}
\item $X_{\disc,0}=\R^{\mesh}\times \R^{\edgesint}\times \{0\}^{\edgesext}$ is the space of degrees of freedom in the cell and on the interior faces of the
mesh:
\[
\qquad X_{\disc,0}=\{v=((v_\cell)_{\cell\in\mesh},(v_\edge)_{\edge\in\edges})\,:\,
v_\cell\in \R\,,\;v_\edge\in\R\,,\; v_\edge=0\mbox{ if $\edge\in\edgesext$}\}.
\]

\item $\Pi_\disc:X_{\disc,0}\to L^2(\O)$ is the following piecewise constant reconstruction on the 
mesh:
\[
\forall v\in X_{\disc,0}\,,\;\forall \cell\in\mesh\,,\;\Pi_\disc v=v_\cell\mbox{ on $\cell$}.
\]

\item $\nabla_\disc:X_{\disc,0}\to L^2(\O)^d$ reconstructs piecewise constant gradients on
the cones $(D_{\cell,\edge})_{\cell\in\mesh\,,\;\edge\in\edgescv}$:
\begin{align}
\forall v\in X_{\disc,0}&\,,\;\forall \cell\in\mesh\,,\;\forall \edge\in\edgescv\,,\nonumber\\
&\nabla_\disc v = \nablacell v + \frac{\sqrt{d}}{d_{\cell,\edge}}[\iso_\cell\pen_\cell(v)]_\edge\bfn_{\cell,\edge}\mbox{ on $D_{\cell,\edge}$},
\label{def:nablaD.HMM}
\end{align}
where:
\begin{itemize}
\item $\dsp \nablacell v=\frac{1}{|\cell|}\sum_{\edge\in\edgescv}|\edge|v_\edge \bfn_{\cell,\edge}$,
\item $\pen_\cell:X_{\disc,0}\rightarrow \R^{\edgescv}$ is given
by $\tpen_\cell(v)=(\tpen_{\cell,\edge}(v))_{\edge\in\edgescv}$ with
$\tpen_{\cell,\edge}(v)=v_\edge-v_\cell-\nablacell v\cdot(\centeredge-\x_\cell)$,
\item $\iso_\cell$ is an isomorphism of the space ${\rm Im}(\tpen_\cell)$.
\end{itemize}
\end{enumerate}
\end{definition}
If $\disc$ is an HMM gradient discretisation,
define $\zeta_\disc$ as the smallest positive number such that, for all $\cell\in\mesh$ and all
$v\in X_{\disc,0}$,
\be\label{def:zetaD}
\begin{aligned}
\zeta_\disc^{-1}\sum_{\edge\in\edgescv}|D_{\cell,\edge}|\left|\frac{\tpen_{\cell,\edge}(v)}{d_{\cell,\edge}}\right|^2
\le \sum_{\edge\in\edgescv}& |D_{\cell,\edge}|\left|\frac{[\iso_\cell\tpen_\cell(v)]_\edge}{d_{\cell,\edge}}\right|^2\\
&\le\zeta_\disc\sum_{\edge\in\edgescv}|D_{\cell,\edge}|\left|\frac{\tpen_{\cell,\edge}(v)}{d_{\cell,\edge}}\right|^2.
\end{aligned}
\ee

It is proved in \cite{dro-12-gra} that, if $A$ is piecewise constant on the mesh,
for any HMM scheme $\mathcal S$ (as defined in \cite{dro-10-uni})
on a polytopal mesh $\polyd$, there exists a choice
of isomorphisms $(\iso_\cell)_{\cell\in\mesh}$
such that $\mathcal S$ is the gradient scheme corresponding to the gradient
discretisation $\disc$ given by Definition \ref{def.HMM.GD}.

\begin{remark} If $A$ is not piecewise constant, the HMM method corresponds to a
modified gradient scheme which consists in writing \eqref{base.GS} with $A$
replaced by its $L^2$ projection on piecewise constant matrix-valued functions on $\mesh$. As shown in Section
\ref{sec:appen.modified} in the appendix, this modification of the gradient scheme
preserves the basic rates of convergence in Theorem \ref{th:error.est.PDE}, as
well as the super-convergence results established below (see Theorems \ref{th.supercv.discs}
and \ref{th-supercv-HMM-group}).
In the following, we will therefore only consider the standard gradient scheme
\eqref{base.GS}, having ensured that the results established for this one also apply
to the original HMM methods even if $A$ is not piecewise constant (which would be
an unreasonable constraint given the assumption \eqref{assump.H2} to come).
\end{remark}

As for conforming and non-conforming finite element methods,
the super-convergen\-ce estimate for HMM schemes requires a higher
regularity of the solution. In particular, a convergence rate of order 2 is obtained 
 under the following
$H^2$ regularity assumption (which holds if $A$ is Lipschitz continuous and $\O$ is convex).
\be\label{assump.H2}
\begin{aligned}
&\mbox{For all $f\in L^2(\O)$, the solution $\bu$ to \eqref{base} belongs to $H^2(\O)$}\\
&\mbox{and $\norm{\bu}{H^2(\O)}+\norm{A\nabla\bu}{H^1(\O)^d}\le C\norm{f}{L^2(\O)}$,}\\
&\mbox{with $C$ depending only on $\O$ and $A$.}
\end{aligned}
\ee
Under this assumption, the solution $\varphi_g$ to \eqref{weak_adjoint} also satisfies
the $H^2$ regularity (with $f$ replaced with $g$ in the estimate).

\begin{remark}[$H^{1+\delta}$ super-convergence]
If we only assume an $H^{1+\delta}$ regularity property instead of \eqref{assump.H2}, then
standard interpolation techniques can be applied in the proofs below, and we still
obtain some super-convergence results (albeit of reduced order, typically
$h_\mesh^{2\delta}$ instead of $h_\mesh^2$, and possibly by strengthening the
regularity assumptions on $f$ if $\delta$ is small).
\end{remark}

In the rest of this section, we use the following notation:
\be\label{notation.lesssim.hMFD}
\begin{aligned}
&\mbox{$\mathcal A \lesssim \mathcal B$ means that 
$\mathcal A\le C\mathcal B$ with $C$ depending only on}\\
&\mbox{$\O$, $A$ and an upper bound
of $\theta_\polyd+\zeta_\disc$.}
\end{aligned}
\ee

\subsection{A modified HMM scheme with better source term approximation}\label{sec:mod.hmm}

We define here the modified HMM scheme and prove its super-convergence. 

\begin{definition}[Modified HMM gradient discretisation]
Let $\disc=(X_{\disc,0},\Pi_{\disc},\nabla_\disc)$ be an HMM gradient
discretisation in the sense of Definition \ref{def.HMM.GD}.
The modified HMM gradient discretisation is $\discs=(X_{\disc,0},\Pi_{\discs},\nabla_\disc)$, where the reconstruction $\Pi_\discs$ is defined by
\be\label{def.Pidiscs}
\forall v\in X_{\disc,0}\,,\;\forall \cell\in\mesh\,,\;\forall \x\in \cell\,,\\
\Pi_{\discs}v(\x)=\Pi_\disc v(\x) + \nablacell v\cdot(\x-\x_\cell).
\ee
\end{definition}

We notice that using $\discs$ in the gradient scheme \eqref{base.GS} only modifies
the discretisation of the source term, and not the scheme's matrix. This modified
HMM scheme is therefore only marginally more expensive than a standard HMM
scheme. Moreover, it enjoys a better super-convergence
result than the HMM method, since this super-convergence (i) does not require $\centers$ in Definition \ref{def:polymesh} to be close on average to the centers of mass
of the cells, (ii) does not require the $H^1$
regularity of $f$, and (iii) gives an $\mathcal O(h_\mesh^2)$ approximation
of the solution $ \bu$ to \eqref{weak_state}
rather than its piecewise constant projection. The influence on super-convergence of the choice of quadrature
rule for the source term was already noticed in \cite{OMN09} for the TPFA scheme in dimension 1.

\begin{theorem}[Super-convergence for the modified HMM method]\label{th.supercv.discs}
Assume \eqref{assump}, \eqref{assump.H2}, and that $d\le 3$. Let $\bu$ be the solution to
\eqref{weak_state}. Take $\polyd$ as a polytopal mesh in the sense of
Definition \ref{def:polymesh}. Let $u_{\discs}$ be the solution of the gradient scheme
\eqref{base.GS} for the modified HMM gradient discretisation $\disc=\discs$ defined above.
Then, recalling the notation \eqref{notation.lesssim.hMFD},
\be\label{sc-discs}
\norm{\Pi_{\discs} u_{\discs}-\bu}{L^2(\O)}\lesssim\norm{f}{L^2(\O)} h_\mesh^2
\ee
and
\be\label{cvgrad-discs}
\norm{\nabla_{\disc} u_{\discs}-\nabla \bu}{L^2(\O)}\lesssim\norm{f}{L^2(\O)}h_\mesh.
\ee
\end{theorem}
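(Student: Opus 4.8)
The plan is to apply the improved estimate of Theorem \ref{th-gs-super} to the modified discretisation $\disc=\discs$, with the scaling $\alpha=h_\mesh$, and to reduce the whole statement to a few interpolation estimates for a well-chosen interpolant. Following the remark after Theorem \ref{th-gs-super}, inequality \eqref{sc-discs} will follow once we produce an operator $\P_\disc\colon H^2(\O)\cap H^1_0(\O)\to X_{\disc,0}$ such that, for every $\psi\in H^2(\O)\cap H^1_0(\O)$ and every $\mathbf{\Psi}\in H^1(\O)^d$,
\begin{equation}\label{plan:goals}
\hI_{\discs,h_\mesh}(\psi,\P_\disc\psi)\lesssim h_\mesh^2\norm{\psi}{H^2(\O)},\quad \WS_\discs(\psi)\lesssim h_\mesh\norm{\psi}{H^2(\O)},\quad |\tW_\discs(\mathbf{\Psi},\P_\disc\psi)|\lesssim h_\mesh^2\norm{\psi}{H^2(\O)}\norm{\mathbf{\Psi}}{H^1(\O)^d}.
\end{equation}
Plugging these into \eqref{improved.gs} with $\alpha=h_\mesh$, and using \eqref{assump.H2} (which bounds $\norm{\bu}{H^2}$ and $\norm{A\nabla\bu}{H^1}$ by $\norm{f}{L^2}$, and $\norm{\varphi_g}{H^2}$, $\norm{A\nabla\varphi_g}{H^1}$ by $\norm{g}{L^2}=1$), turns every summand on the right-hand side into $\mathcal O(h_\mesh^2)\norm{f}{L^2(\O)}$. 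I would take the natural HMM interpolant, $(\P_\disc\psi)_\cell=\psi(\x_\cell)$ and $(\P_\disc\psi)_\edge=\frac1{|\edge|}\int_\edge\psi\d s$; the cell values are well defined because $H^2(\O)\hookrightarrow C^0(\overline\O)$ for $d\le 3$, which is exactly where the dimensional restriction enters.

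For the first two estimates in \eqref{plan:goals} I would rely on two reproduction properties. For affine $\ell$, the identity $\sum_{\edge\in\edgescv}|\edge|\ell(\centeredge)\bfn_{\cell,\edge}=|\cell|\nabla\ell$ (divergence theorem, since $\centeredge$ is the face centroid) gives $\nablacell\P_\disc\ell=\nabla\ell$ and hence $\pen_{\cell,\edge}(\P_\disc\ell)=0$; thus both $\nabla_\disc\P_\disc$ and $\Pi_\discs\P_\disc$ reproduce affine functions exactly on each cell. A Bramble--Hilbert/scaling argument (with constants controlled by $\theta_\polyd+\zeta_\disc$) then yields $\norm{\Pi_\discs\P_\disc\psi-\psi}{L^2(\cell)}\lesssim h_\cell^2|\psi|_{H^2(\cell)}$ and $\norm{\nabla_\disc\P_\disc\psi-\nabla\psi}{L^2(\cell)}\lesssim h_\cell|\psi|_{H^2(\cell)}$. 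Summing over cells gives $\hI_{\discs,h_\mesh}(\psi,\P_\disc\psi)\lesssim h_\mesh^2+h_\mesh\cdot h_\mesh=\mathcal O(h_\mesh^2)$ and $\WS_\discs(\psi)=\mathcal O(h_\mesh)$, the limit-conformity $W_\discs$ being estimated as usual, comparing it with $W_\disc$ through $\norm{\Pi_\discs w-\Pi_\disc w}{L^2(\O)}\lesssim h_\mesh\norm{\nabla_\disc w}{L^2(\O)^d}$.

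The main obstacle is the third estimate in \eqref{plan:goals}, since a crude bound only gives $\mathcal O(h_\mesh)$. Writing $\int_\O(\psi\,\div\mathbf{\Psi}+\nabla\psi\cdot\mathbf{\Psi})\d\x=0$ (as $\psi\in H^1_0(\O)$) and recalling \eqref{def:tW}, one obtains
\begin{equation}
\tW_\discs(\mathbf{\Psi},\P_\disc\psi)=\int_\O(\Pi_\discs\P_\disc\psi-\psi)\div\mathbf{\Psi}\d\x+\int_\O(\nabla_\disc\P_\disc\psi-\nabla\psi)\cdot\mathbf{\Psi}\d\x.
\end{equation}
The first integral is $\mathcal O(h_\mesh^2)$ by the $L^2$ reconstruction bound. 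For the second, the decisive point is the cell-wise consistency identity $\int_\cell\nabla_\disc v\d\x=\sum_{\edge\in\edgescv}|\edge|v_\edge\bfn_{\cell,\edge}$: the stabilisation term integrates to zero over $\cell$ because $\iso_\cell\pen_\cell(v)\in\mathrm{Im}(\pen_\cell)$ and $\sum_{\edge\in\edgescv}|\edge|[\pen_\cell(w)]_\edge\bfn_{\cell,\edge}=0$ for all $w$ (again by the affine reproduction of $\nablacell$). With the face-\emph{average} interpolation this yields the exact identity $\int_\cell\nabla_\disc\P_\disc\psi\d\x=\sum_{\edge\in\edgescv}\bfn_{\cell,\edge}\int_\edge\psi\d s=\int_\cell\nabla\psi\d\x$. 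Hence, replacing $\mathbf{\Psi}$ by $\mathbf{\Psi}-\overline{\mathbf{\Psi}}_\cell$ (its cell mean) leaves the second integral unchanged on each $\cell$, and a cell-wise Poincar\'e--Wirtinger inequality gives $\norm{\mathbf{\Psi}-\overline{\mathbf{\Psi}}_\cell}{L^2(\cell)^d}\lesssim h_\cell\norm{\nabla\mathbf{\Psi}}{L^2(\cell)^{d\times d}}$. Pairing this with $\norm{\nabla_\disc\P_\disc\psi-\nabla\psi}{L^2(\cell)^d}\lesssim h_\cell|\psi|_{H^2(\cell)}$ and a global Cauchy--Schwarz produces the sought $\mathcal O(h_\mesh^2)$ bound.

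Assembling these estimates in \eqref{improved.gs} proves \eqref{sc-discs}. Finally, \eqref{cvgrad-discs} is immediate: since $\discs$ is itself a gradient discretisation, the basic error estimate \eqref{est.error.gs} applied to it gives $\norm{\nabla_\disc u_\discs-\nabla\bu}{L^2(\O)^d}\le C\,\WS_\discs(\bu)$, and we have just shown $\WS_\discs(\bu)\lesssim h_\mesh\norm{f}{L^2(\O)}$.
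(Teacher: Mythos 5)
Your proposal is correct and follows essentially the same route as the paper: the same interpolant (pointwise values $\psi(\x_\cell)$ at cell points, face averages on edges), the same application of Theorem \ref{th-gs-super} with $\alpha=h_\mesh$ reduced to the estimates \eqref{interp.h2} and \eqref{tW.h2}, the same key identity $\int_\cell\nabla_\disc\P_\disc\psi\d\x=\int_\cell\nabla\psi\d\x$ followed by subtraction of the cell mean of $\mathbf{\Psi}$ and a Poincar\'e--Wirtinger bound, and the same derivation of \eqref{cvgrad-discs} from the basic estimate \eqref{est.error.gs}. The only differences are presentational: you inline a (correct) proof of the averaged-gradient identity \eqref{average.grad} and compress the paper's $\mathbb{P}_1$-exact-reconstruction/affine-approximation machinery (Lemma \ref{lem:app.H2.lin}) into a generic Bramble--Hilbert argument.
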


Let us recall a few results that will be used in the proof of this theorem.
From \cite[Lemma 12.4]{koala}:
\be\label{average.grad}
\forall v\in X_{\disc,0}\,,\;
\forall \cell\in\mesh\,,\;\int_\cell \nabla_\disc v(\x)\d\x=|\cell|\nablacell v.
\ee
As a consequence,
\be\label{est.grad}
|\nablacell v|\le |\cell|^{-\frac{1}{2}}\norm{\nabla_\disc v}{L^2(\cell)}.
\ee
By \cite[Propositions 12.14 and 12.15]{koala},
\begin{align}
\label{est.CD}
&C_\disc\lesssim 1\,,\\
\label{est.SD}
&\forall \phi\in H^2(\O)\cap H^1_0(\O)\,,\; S_\disc(\phi)\lesssim\norm{\phi}{H^2(\O)}h_\mesh\,,\\
\label{est.WD}
&\forall\bpsi\in H^1(\O)^d\,,\;W_{\disc}(\bpsi)\lesssim \norm{\bpsi}{H^1(\O)^d}h_\mesh.
\end{align}
Using \eqref{est.grad}, we readily check that
\be\label{comp.D.Ds}
\forall v\in X_{\disc,0}\,,\;\norm{\Pi_\disc v-\Pi_{\discs}v}{L^2(\O)}\le
h_\mesh\norm{\nabla_\disc v}{L^2(\O)^d}.
\ee
Therefore, as a consequence of \eqref{est.CD}--\eqref{est.WD} and of \cite[Remark 7.49]{koala},
\begin{align}
\label{est.CDs}
&C_\discs\lesssim 1\,,\\
\label{est.SDs}
&\forall \phi\in H^2(\O)\cap H^1_0(\O)\,,\; S_\discs(\phi)\lesssim\norm{\phi}{H^2(\O)}h_\mesh\,,\\
\label{est.WDs}
&\forall\bpsi\in H^1(\O)^d\,,\;W_{\discs}(\bpsi)\lesssim \norm{\bpsi}{H^1(\O)^d}h_\mesh.
\end{align}

We can now turn to the proof of the super-convergence result for the
modified HMM gradient scheme based on $\discs$.

\begin{proof}[Proof of Theorem \ref{th.supercv.discs}]

Properties \eqref{est.SDs}--\eqref{est.WDs} show that, for all $\phi\in H^2(\O)\cap
H^1_0(\O)$ such that $A\nabla\phi\in H^1(\O)^d$,
\be\label{est.WS.discs}
\WS_\discs(\phi)\lesssim \left(\norm{A\nabla\phi}{H^1(\O)^d}+
\norm{\phi}{H^2(\O)}\right)h_\mesh.
\ee
Hence, estimate \eqref{cvgrad-discs} is a consequence of \eqref{est.error.gs} in Theorem \ref{th:error.est.PDE}, of
\eqref{est.CDs} and of \eqref{assump.H2}. To prove \eqref{sc-discs}, we use 
the improved $L^2$ estimate for gradient schemes (Theorem \ref{th-gs-super}). Let us assume that we find a
mapping $\P_\discs: H^2(\O)\cap H^1_0(\O)\to X_{\disc,0}$ such that,
for any $\phi\in H^2(\O)\cap H^1_0(\O)$,
\be\label{interp.h2}
\hI_{\discs,h_\mesh}(\phi,\P_\discs\phi)\lesssim \norm{\phi}{H^2(\O)} h_\mesh^2
\ee
and
\be\label{tW.h2}
\forall \bpsi\in H^1(\O)^d\,,\;
\left|\tW_\discs(\bpsi,\P_\discs\phi)\right|
\lesssim \norm{\phi}{H^2(\O)}\norm{\bpsi}{H^1(\O)^d}h_\mesh^2.
\ee
Then, the proof of \eqref{sc-discs} is concluded by
applying Theorem \ref{th-gs-super} with this interpolant $\P_\discs$ (and $\alpha=h_\mesh$),
by using \eqref{assump.H2} on $\bu$ and $\varphi_g$ (recall that $\norm{g}{L^2(\O)}=1$),
and by invoking \eqref{est.WS.discs}. We now turn to the construction of $\P_\discs$
and to the proof of its properties.

If $\phi\in H^2(\O)$, then $\phi$ is continuous (since $d\le 3$) and
we can therefore set $\P_\discs\phi=((\phi_\cell)_{\cell\in\mesh},
(\phi_\edge)_{\edge\in\edgescv})\in X_{\disc,0}$ with
\[
\forall \cell\in\mesh\,,\;\phi_\cell=\phi(\x_\cell)\,;\quad
\forall \edge\in\edges\,,\;\phi_\edge=\frac{1}{|\edge|}\int_\edge \phi(\x)\d s(\x).
\]

\medskip

\textbf{Step 1}: {\it Proof of \eqref{interp.h2}.}

Let $\cell\in\mesh$. By \cite[Lemma B.1]{koala}, $\cell$ is star-shaped with respect to
all points in the ball $B(\x_\cell,\min_{\edge\in\edgescv}d_{\cell,\edge})
\supset B(\x_\cell,\theta_\polyd^{-1}h_\cell)$. Hence we can apply Lemma \ref{lem:app.H2.lin} (see Appendix)  with $V=\cell$. Let $L_\phi$ be the affine map given by this lemma.
Let $\gr_\cell=(\gr_\cell^\cell,
(\gr_\cell^\edge)_{\edge\in\edgescv})$ be a $\mathbb{P}_1$-exact gradient reconstruction on $\cell$ upon $S=(\x_\cell,(\centeredge)_{\edge\in\edgescv})$ (see Definition \ref{def:linex} with $U=\cell$).
We let 
\be\label{def:ell}
\ell=(L_\phi(\x_\cell),(L_\phi(\centeredge))_{\edge\in\edgescv}).
\ee
With abuse of notations, write $\gr_\cell \P_\discs\phi$ for $\gr_\cell(\phi_\cell,(\phi_\edge)_{\edge\in\edgescv})$.
Since $\gr_\cell$ is a $\mathbb{P}_1$-exact gradient reconstruction, we have $\gr_\cell \ell=\nabla L_\phi$ and thus
$\gr_\cell \P_\discs\phi -\nabla\phi=\gr_\cell \P_\discs\phi-\gr_\cell \ell
+\nabla L_\phi-\nabla\phi$.
By property of the norm $\norm{\gr_\cell}{}$ in Definition \ref{def:linex},
\begin{align}
&\norm{\gr_\cell \P_\discs\phi -\nabla\phi}{L^2(\cell)^d}\nonumber\\
&\qquad\le
h_\cell^{-1}|\cell|^{\frac{1}{2}}\norm{\gr_\cell}{}\max\left(|L_\phi(\x_\cell)-\phi_\cell|,
\max_{\edge\in\edgescv}|L_\phi(\centeredge)
-\phi_\edge|\right)\nonumber\\
&\qquad\quad+\norm{\nabla L_\phi-\nabla\phi}{L^2(\cell)^d}.
\label{est.gr.general}
\end{align}
Since $L_\phi$ is affine and $\centeredge$ is the center of mass of $\edge$,
by definition of $\phi_\edge$ and using estimate
\eqref{lem:app.H2.lin.est1}, we have
\[
|L_\phi(\centeredge) -\phi_\edge|=\left|\frac{1}{|\edge|}\int_\edge (L_\phi(\x)-\phi(\x))\d s(\x)\right|
\lesssim h_\cell^2 |\cell|^{-\frac{1}{2}}\norm{\phi}{H^2(\cell)}.
\]
We also have $|L_\phi(\x_\cell)-\phi_\cell|\lesssim h_\cell^2 |\cell|^{-\frac{1}{2}}\norm{\phi}{H^2(\cell)}$.
Plugging into \eqref{est.gr.general} and using \eqref{lem:app.H2.lin.est2}, this gives
\be\label{est.gr.lle}
\norm{\gr_\cell \P_\discs\phi -\nabla\phi}{L^2(\cell)^d}
\lesssim (1+\norm{\gr_\cell}{})h_\cell \norm{\phi}{H^2(\cell)}.
\ee
The restriction $(\nabla_\disc)_{|\cell}$ of $\nabla_\disc$ to the degrees of freedom 
$(v_\cell,(v_\edge)_{\edge\in\edgescv})$ in the cell $\cell$ 
is a $\mathbb{P}_1$-exact
gradient reconstruction on $\cell$ upon $S$ \cite[Section 3.6]{DEH15} and satisfies
$\norm{(\nabla_\disc)_{|\cell}}{}\lesssim 1$ (see \cite[Lemma 12.8]{koala}).
Hence, applying \eqref{est.gr.lle} to $\gr_\cell=(\nabla_\disc)_{|\cell}$,
squaring the resulting inequality, and summing over $\cell\in\mesh$, we find
\be\label{est.interp.grad}
\norm{\nabla_\disc\P_\discs\phi-\nabla\phi}{L^2(\O)^d}
\lesssim h_\mesh \norm{\phi}{H^2(\O)}.
\ee

Let us now estimate $\Pi_\discs\P_\discs\phi -\phi$. We still take $\ell$ as defined
by \eqref{def:ell}. Since $\nablacell$ is a $\mathbb{P}_1$-exact gradient reconstruction
upon $(\x_\cell,(\centeredge)_{\edge\in\edgescv})$ (see \cite[Lemma B.6]{koala}), we have
$\nablacell\ell=\nabla L_\phi$ and thus, for any $\x\in \cell$,
by the definition \eqref{def.Pidiscs} of $\Pi_\discs$,
\[
\Pi_\discs \ell(\x)=L_\phi(\x_\cell)+\nabla L_\phi\cdot(\x-\x_\cell)=L_\phi(\x).
\]
Thus, using the estimate \eqref{lem:app.H2.lin.est1}, we have, for all $\x\in \cell$,
\begin{align*}
|\Pi_\discs &\P_\discs\phi(\x)-\phi(\x)|\\
&\le
|\Pi_\discs\P_\discs\phi(\x) -\Pi_\discs \ell(\x)|
+|L_\phi(\x)-\phi(\x)|\\
&\le |\phi(\x_\cell)-L_\phi(\x_\cell)|+h_\cell|\nablacell\P_\discs\phi-\nablacell \ell|
+\sup_{\overline{\cell}}|\phi-L_\phi|\\
&\lesssim  h_\cell^2|\cell|^{-\frac{1}{2}}\norm{\phi}{H^2(\cell)}+h_\cell|\nablacell\P_\discs\phi-\nabla\phi|
+h_\cell|\nabla\phi-\nabla L_\phi|.
\end{align*}
Taking the $L^2(\cell)$ norm leads to
\begin{align}
\norm{\Pi_\discs \P_\discs\phi-\phi}{L^2(\cell)}
\lesssim{}&  h_\cell^2\norm{\phi}{H^2(\cell)}
+h_\cell\norm{\nablacell\P_\discs\phi-\nabla\phi}{L^2(\cell)^d}\nonumber\\
&+h_\cell\norm{\nabla\phi-\nabla L_\phi}{L^2(\cell)^d}.
\label{est.Id.last}
\end{align}
It is easy to see that the $\mathbb{P}_1$-exact gradient reconstruction $\gr_\cell=\nablacell$
satisfies $\norm{\gr_\cell}{}\lesssim 1$ (see for example \cite[Lemma B.6]{koala},
or use \eqref{est.grad} and $\norm{(\nabla_\disc)_{|\cell}}{}\lesssim 1$
as mentioned previously).
Hence, applying \eqref{est.gr.lle} to $\gr_\cell=\nablacell$ and using \eqref{lem:app.H2.lin.est2},
we obtain
$\norm{\Pi_\discs \P_\discs\phi-\phi}{L^2(\cell)}\lesssim h_\cell^2\norm{\phi}{H^2(\cell)}$.
Squaring and summing over $\cell\in\mesh$ yields
$\norm{\Pi_\discs \P_\discs\phi-\phi}{L^2(\O)}\lesssim h_\mesh^2\norm{\phi}{H^2(\O)}$
which, combined with \eqref{est.interp.grad}, concludes the proof of \eqref{interp.h2}.

\medskip

\textbf{Step 2}: {\it Proof of \eqref{tW.h2}.}

Since $\phi\in H^1_0(\O)$, by Stokes formula we can write
\begin{align}
\tW_\discs(\bpsi,\P_\discs\phi)&=
\int_\O \left( \Pi_\discs \P_\discs\phi \: \div \bpsi
+\nabla_\disc \P_\discs\phi\cdot\bpsi \right) \d\x\nonumber\\
&=\int_\O \left( (\Pi_\discs \P_\discs\phi -\phi)\div \bpsi
+(\nabla_\disc \P_\discs\phi-\nabla\phi)\cdot\bpsi \right) \d\x.
\label{tW.introduce.phi}
\end{align}
Using \eqref{interp.h2}, this gives
\begin{align}
\left|\tW_\discs(\bpsi,\P_\discs\phi)\right|
\lesssim{}& h_\mesh^2\norm{\phi}{H^2(\O)}\norm{\div\bpsi}{L^2(\O)}\nonumber\\
&+\sum_{\cell\in\mesh}\left|\int_\cell (\nabla_\disc \P_\discs\phi-\nabla\phi)\cdot\bpsi \d\x\right|.
\label{est.tW.1}
\end{align}
We now work on the last term in this estimate. By \eqref{average.grad} and choice
of $\phi_\edge$ we have
\begin{align}
\int_\cell \nabla_\disc \P_\discs\phi\d\x=|\cell|\nablacell\P_\discs\phi
&=\sum_{\edge\in\edgescv}|\edge|\phi_\edge\bfn_{\cell,\edge}\nonumber\\
&=\sum_{\edge\in\edgescv}\int_\edge \phi(\x)\bfn_{\cell,\edge}\d\x
=\int_\cell \nabla\phi\d\x.
\label{trick.average}
\end{align}
Hence, if $\displaystyle \bpsi_\cell=\frac{1}{|\cell|}\int_\cell \bpsi(\x)\d\x$,
\begin{align*}
\left|\int_\cell (\nabla_\disc \P_\discs\phi-\nabla\phi)\cdot\bpsi\d\x\right|
&=\left|\int_\cell (\nabla_\disc \P_\discs\phi-\nabla\phi)\cdot(\bpsi-\bpsi_\cell)\d\x\right|\\
&\le \norm{\nabla_\disc\P_\discs\phi-\nabla\phi}{L^2(\cell)^d}
\norm{\bpsi-\bpsi_\cell}{L^2(\cell)^d}.
\end{align*}
It is quite classical (see, e.g., \cite[Lemma B.7]{koala}) that, for all $\psi\in H^1(\cell)$,
if $\psi_K=\frac{1}{|K|}\int_K \psi(\x)\d\x$ then
\be\label{est.psi}
\norm{\psi-\psi_\cell}{L^2(\cell)}\lesssim h_\cell\norm{\psi}{H^1(\cell)}.
\ee
Applying this to each component of $\bpsi$ and using \eqref{est.gr.lle} with $\gr_\cell=
(\nabla_\disc)_{|\cell}$ yields
\[
\left|\int_\cell (\nabla_\disc \P_\discs\phi-\nabla\phi)\cdot\bpsi\d\x\right|
\lesssim h_\cell^2\norm{\phi}{H^2(\cell)}\norm{\bpsi}{H^1(\cell)^d}.
\]
Summing over $\cell$, using the Cauchy--Schwarz inequality, and plugging the result
in \eqref{est.tW.1}, we obtain \eqref{tW.h2}.
\end{proof}

\section{Super-convergence result for HMM and TPFA schemes}\label{sec:imp.HMM}

If $(\x_\cell)_{\cell\in\mesh}$ are the centers of mass of the cells, HMM
methods are hMFD methods and the super-convergence is therefore known.
In some instances, however, it is interesting to choose $\x_\cell$ not at the center of mass
of $\cell$. On triangles, for example, choosing $\x_\cell$ as the circumcenter of
$\cell$ allows to recover the two-point flux approximation finite volume scheme. This ensures that the scheme satisfies a discrete maximum principle, has a very small matrix stencil, etc.
Our aim is to show that, if $(\x_\cell)_{\cell\in \mesh}$ are not the centers of mass,
but if we can create patches of cells over which, \emph{on average}, these points are (close to)
the centers of mass, then a super-convergence result still occurs for HMM methods.
We first define this notion of patches of cells. Recall that a set $X$ is
star-shaped with respect to a subset $Y$ if, for all $\x\in X$ and all $\y\in Y$, the segment
$[\x,\y]$ is contained in $X$.

\begin{definition}[Patching of cells]\label{def.patching}
Let $\polyd$ be a polytopal mesh of $\O$ in the sense of Definition \ref{def:polymesh}.
A patching of the cells of $\polyd$ is a family $\groupcell$ of disjoint sets of cells (the patches), such that,
for each patch $\patch\in\groupcell$, letting $U_\patch:=\cup_{\cell\in \patch}\cell$, there exists a ball $B_\patch\subset U_\patch$ such that $U_\patch$ is star-shaped with respect to $B_\patch$.

We then define:
\begin{itemize}
\item $\Omega_\groupcell=\cup_{\patch\in\groupcell}\cup_{K\in\patch}K$ the region
of $\Omega$ covered by the patches, and $\Omega_\groupcell^c=\Omega\backslash\Omega_\groupcell$ its complement,
\item the regularity factor by
\be\label{def:muG}
\mu_\groupcell = \max_{\patch\in\groupcell}{\rm Card}(\patch)+
\max_{\patch\in\groupcell} \max_{K\in \patch}\frac{h_K}{{\rm diam}(B_\patch)}
\ee
where ${\rm Card}(\patch)$ is the number of cells in $\patch$,
\item the maximum norm of the patch-averaged vector between
the centers of masses and the cell points by
\[
e_\groupcell=\max_{\patch\in \groupcell}\left|\frac{1}{|U_\patch|}\sum_{\cell\in \patch}|\cell|(\overline{\x}_\cell-\x_\cell)\right|.
\]
\end{itemize}
\end{definition}

\begin{remark} For any $\patch\in\groupcell$, since $U_\patch$ is connected,
the diameter of $U_\patch$ (and thus of $B_\patch$) is bounded above by $\mu_\groupcell h_K$ for all $K\in \patch$.
Hence, bounding above $\mu_\groupcell$ requires in particular that, for any $\patch\in\groupcell$,
the diameter of $B_\patch$ is comparable to the diameter of any $K\in \patch$.
\label{rem:reg.groupcell}
\end{remark}

Using this notion, we state our super-convergence result for HMM schemes.
The result varies from some ``classical'' ones as it does not directly
involve the $L^2$ projection of the exact function on the piecewise constant functions,
but pointwise values of the exact function. This is in a sense expected, as the $L^2$
projection is an appropriate operator only when $(\x_\cell)_{\cell\in\mesh}$ are
the centers of mass (only case when the $L^2$ projection of $\bu$ is $h_\cell^2$ close
to the pointwise values $\bu(\x_\cell)$).

In the following theorem and its proof, we use the notation:
\be\label{notation.lesssim.hMFD.group}
\begin{aligned}
&\mbox{$\mathcal A \lesssim \mathcal B$ means that 
$\mathcal A\le C\mathcal B$ with $C$ depending only on}\\
&\mbox{$\O$, $A$ and an upper bound
of $\theta_\polyd+\zeta_\disc+\mu_\groupcell$}
\end{aligned}
\ee
and we call a ``strip of witdh $\rho>0$'' a set of the form $\strip_{H}(\rho):=\{x\in\O\,:\,
\dist(x,H)\le \rho\}$, where $H$ is an hyperplane of $\R^d$.

\begin{theorem}[Super-convergence for HMM schemes]\label{th-supercv-HMM-group}~

Under the assumptions \eqref{assump}, \eqref{assump.H2}, and that $d\le 3$,
let $f\in H^1(\O)$ and $\bu$ be the solution to
\eqref{weak_state}. Choose $\polyd$ to be  a polytopal mesh in the sense of
Definition \ref{def:polymesh}. Let $\groupcell$ be a patching of the cells of $\polyd$
such that $\Omega_\groupcell^c$ is contained in the union of $r_\groupcell$ strips
of width $\rho_\groupcell$.

Take $\disc$ an HMM gradient discretisation on $\polyd$
(see Definition \ref{def.HMM.GD}) and let $u_{\disc}$ be the solution of the corresponding
gradient scheme \eqref{base.GS}. Let $\bu_\centers$ be the piecewise constant
function on $\mesh$ equal to $\bu(\x_K)$ on $K\in\mesh$ (see \eqref{def.ucenters}).
Then,
\be\label{sc-disc-group}
\norm{\Pi_{\disc} u_{\disc}-\bu_\centers}{L^2(\O)}\lesssim
\norm{f}{H^1(\O)}\left(h_\mesh^2+e_\groupcell+r_\groupcell \rho_\groupcell h_\mesh\right).
\ee
In particular, if $\groupcell$ is such that $e_\groupcell\lesssim h_\mesh^2$,
$r_\groupcell\lesssim 1$ and $\rho_\groupcell\lesssim h_\mesh$, then the
following super-convergence estimate occurs:
\[
\norm{\Pi_{\disc} u_{\disc}-\bu_\centers}{L^2(\O)}\lesssim
\norm{f}{H^1(\O)} h_\mesh^2.
\]
\end{theorem}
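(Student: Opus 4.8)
The plan is to deduce Theorem~\ref{th-supercv-HMM-group} by combining the super-convergence of the \emph{modified} HMM scheme (Theorem~\ref{th.supercv.discs}) with a careful analysis of the difference between the standard reconstruction $\Pi_\disc$ and the modified one $\Pi_\discs$, the latter difference being controlled by the patch-averaged discrepancy $e_\groupcell$ between cell points and centers of mass. The key observation is that both schemes share the same matrix and the same gradient $\nabla_\disc$; only the source-term quadrature differs. Writing $u_\disc$ for the standard HMM solution and $u_{\discs}$ for the modified one, I would first estimate $\norm{\Pi_\disc u_\disc - \Pi_\disc u_{\discs}}{L^2(\O)}$ by testing the difference of the two schemes against $u_\disc - u_{\discs}$, using coercivity \eqref{est.CD} and the relation \eqref{comp.D.Ds} between $\Pi_\disc$ and $\Pi_\discs$. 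This reduces the comparison to controlling $(f, \Pi_\disc w - \Pi_\discs w)$ for a test function $w$, i.e.\ the integral $\sum_{\cell} \int_\cell f\,\nablacell w\cdot(\x-\x_\cell)\dx$, which is precisely where the cell-point positions enter.

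The next step, and what I expect to be \textbf{the main obstacle}, is to show that this source-term discrepancy contributes an error of size $\norm{f}{H^1(\O)}(h_\mesh^2 + e_\groupcell)$ rather than the naive $\mathcal O(h_\mesh)$ one would get cell-by-cell. The gain comes from \emph{patch-level compensation}: on each patch $\patch$ one replaces $f$ by its average and $\nablacell w$ by a patch-constant value, so that the leading contribution becomes proportional to $\frac{1}{|U_\patch|}\sum_{\cell\in\patch}|\cell|\,\e_\cell$ with $\e_\cell = \overline{\x}_\cell - \x_\cell$, whose maximum over patches is exactly $e_\groupcell$. The remaining pieces—replacing $f$ by its patch average, and replacing $\nablacell w$ by a patch-constant—are each $\mathcal O(h_\mesh)$ in the appropriate local norms (using $f\in H^1$ and \eqref{est.psi}), and when multiplied against the factor $h_\mesh$ already present from $(\x-\x_\cell)$ they yield $\mathcal O(h_\mesh^2)$. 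Assembling these over all patches via Cauchy--Schwarz, and using the stability bound $\norm{\nabla_\disc w}{L^2(\O)^d}\lesssim \norm{f}{L^2(\O)}$ from \eqref{est.stability}, produces the claimed bound on $\norm{\Pi_\disc u_\disc - \Pi_\disc u_{\discs}}{L^2(\O)}$.

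Finally I would assemble the pieces through a triangle inequality:
\[
\norm{\Pi_\disc u_\disc - \bu_\centers}{L^2(\O)}
\le \norm{\Pi_\disc u_\disc - \Pi_\disc u_{\discs}}{L^2(\O)}
+ \norm{\Pi_\disc u_{\discs} - \bu_\centers}{L^2(\O)}.
\]
The first term is handled as above. For the second term, I would write $\Pi_\disc u_{\discs} - \bu_\centers = (\Pi_{\discs}u_{\discs} - \bu) + (\bu - \bu_\centers) - (\Pi_{\discs}u_{\discs} - \Pi_\disc u_{\discs})$ and bound each summand: the first by $\mathcal O(h_\mesh^2)$ via \eqref{sc-discs}; the difference $\Pi_{\discs}u_{\discs} - \Pi_\disc u_{\discs} = \nablacell u_{\discs}\cdot(\x-\x_\cell)$ again by \eqref{comp.D.Ds} and the gradient convergence \eqref{cvgrad-discs}, which on each cell is $\mathcal O(h_\mesh)$ but pairs against another $\mathcal O(h_\mesh)$ factor after the same patch-compensation argument, so it also lands at the $h_\mesh^2 + e_\groupcell$ level; and the term $\bu - \bu_\centers$, which measures the gap between $\bu$ and its pointwise values at the $\x_\cell$, handled by the same affine-approximation technique (Lemma~\ref{lem:app.H2.lin}) used in the proof of Theorem~\ref{th.supercv.discs}. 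Summing these contributions yields \eqref{sc-disc-group}, and the final claim follows immediately by specialising to patchings with $e_\groupcell\lesssim h_\mesh^2$.
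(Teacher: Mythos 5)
Your overall skeleton --- compare $u_\disc$ with the modified-scheme solution $u_{\discs}$, invoke Theorem \ref{th.supercv.discs}, and finish with a triangle inequality --- is indeed the paper's strategy, but the concrete mechanisms you propose for both key steps fail.

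The main gap is in your comparison step. Testing the difference of the two schemes against $w=u_\disc-u_{\discs}$ and using coercivity gives $\norm{\nabla_\disc w}{L^2(\O)^d}^2\lesssim\big|\sum_{\cell\in\mesh}\big(\int_\cell f(\x)(\x-\x_\cell)\d\x\big)\cdot\nablacell w\big|$, and your patch-compensation would require replacing $\nablacell w$ by a patch-constant with an error small relative to $\norm{\nabla_\disc w}{L^2(\O)^d}$. But $w$ is the difference of two discrete solutions: $\nablacell w$ has no smoothness, and its deviation from any patch-constant is only bounded by $\norm{\nabla_\disc w}{L^2(\O)^d}$ itself. So the term $\sum_\cell(\int_\cell f)\e_\cell\cdot\nablacell w$ is only $\mathcal O(h_\mesh)\norm{f}{L^2(\O)}\norm{\nabla_\disc w}{L^2(\O)^d}$, the energy argument delivers $\norm{\nabla_\disc w}{L^2(\O)^d}\lesssim h_\mesh$, and the discrete Poincar\'e inequality then yields only $\norm{\Pi_\disc w}{L^2(\O)}\lesssim h_\mesh$ --- one full order short. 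This is not an artifact: the two discrete gradients genuinely differ at order $h_\mesh$; it is only the reconstructed \emph{functions} that are $h_\mesh^2$-close, and an energy estimate cannot see that. The paper's Step 1 uses a duality argument instead: it tests against the discrete solution $v_\disc$ of the scheme with an arbitrary right-hand side $g$, so that $\int_\O g\,\Pi_\disc(u_{\discs}-u_\disc)\d\x=\int_\O f(\Pi_{\discs}v_\disc-\Pi_\disc v_\disc)\d\x$, and then replaces $\nablacell v_\disc$ by $\nabla\bv$ (the gradient of the $H^2$-regular continuous dual solution) at cost $\mathcal O(h_\mesh)$. The factor multiplying $\e_\cell$ thus becomes $f\nabla\bv\in W^{1,1}(\O)^d$, a fixed function whose averages over cells and patches can be compared (Lemma \ref{lem.est.moy}); this is the only place where $e_\groupcell$ can emerge. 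Without the dual problem, the compensation is invisible.

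The second gap is in your final assembly: you bound separately $\bu-\bu_\centers$ and $\Pi_{\discs}u_{\discs}-\Pi_\disc u_{\discs}=\nablacell u_{\discs}\cdot(\x-\x_\cell)$. Each of these is genuinely of order $h_\mesh$ (for instance $\bu-\bu_\centers\approx\nabla\bu\cdot(\x-\x_\cell)$ on each cell), and no patch-compensation applies to them: they are standalone $L^2$ norms, never paired against $\e_\cell$. Only their \emph{difference} $[\bu(\x)-\bu(\x_\cell)]-\nablacell u_{\discs}\cdot(\x-\x_\cell)=(\nabla\bu-\nablacell u_{\discs})\cdot(\x-\x_\cell)+\mathcal O(h_\cell^2)$ is second order, through a cancellation that term-by-term estimates destroy. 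The paper captures this cancellation with the weighted projection $\Pmesh$ of its Step 2 (built from Lemma \ref{lem.w.xK}), which satisfies $\Pmesh(\Pi_{\discs}u_{\discs})=\Pi_\disc u_{\discs}$ exactly and $\norm{\Pmesh\bu-\bu_\centers}{L^2(\O)}\lesssim h_\mesh^2\norm{\bu}{H^2(\O)}$; alternatively one can keep your two terms together and combine Lemma \ref{lem:app.H2.lin} with \eqref{cvgrad-discs}, but one cannot bound them separately as you propose.
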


\begin{remark}[Super-convergence for $\mathbb{RT}_0$]
The $\mathbb{RT}_0$--$\mathbb{P}_0$ mixed finite element me\-thod is
a particular instance of an HMM scheme with $(\x_K)_{K\in\mesh}$ being the centers of mass
of the cells \cite{CAN08}. Hence, Theorem \ref{th-supercv-HMM-group} with 
the trivial patching $\groupcell=\mesh$ yields
a super-convergence result for the $\mathbb{RT}_0$--$\mathbb{P}_0$ scheme.
This result was previously established in \cite{DM85}.
\end{remark}

\begin{proof}

The proof hinges on two tricks. In Step 1, letting $u_\discs$ be the solution to the
modified HMM scheme, we show that $ \norm{\Pi_\disc u_\disc-\Pi_\disc u_\discs}{L^2(\O)}$
is of order $\mathcal O(h_\mesh^2+e_\groupcell+r_\groupcell \rho_\groupcell h_\mesh)$. In Step 2, we introduce a
weighted projection $\pi^w_\mesh$ such that $\Pi_\disc u_\discs=\pi^w_\mesh(\Pi_\discs u_\discs)$
and $\norm{\pi^w_\mesh \bu-\bu_\centers}{L^2(\O)}=\mathcal O(h_\mesh^2)$.
Combined with the result from Step 1 and the super-convergence property \eqref{sc-discs} of the modified
HMM method, this concludes the proof.

\medskip

\textbf{Step 1}: \emph{Comparison of the solutions to the schemes for $\disc$ and $\discs$}.

Let $u_\discs$ be the solution to \eqref{base.GS} with $\discs$ instead of $\disc$.
Subtracting the two gradient schemes corresponding to $\discs$ and $\disc$ we see that,
for all $v_\disc\in X_{\disc,0}$,
\begin{multline}\label{GS.sub}
\int_\O A(\x)(\nabla_\disc u_\discs-\nabla_\disc u_\disc)(\x)\cdot \nabla_\disc v_\disc(\x)\d\x
\\
=\int_\O f(\x)(\Pi_\discs v_\disc-\Pi_\disc v_\disc)(\x)\d\x.
\end{multline}
Let $\e_\cell=\overline{\x}_\cell-\x_\cell$.
Using the definition \eqref{def.Pidiscs} of $\Pi_\discs$, we can write
\begin{align}
&\int_\O f(\x)(\Pi_\discs v_\disc-\Pi_\disc v_\disc)(\x)\d\x\nonumber\\
&=\sum_{\cell\in\mesh}\left(\int_\cell f(\x) (\x-\x_\cell)\d\x\right)\cdot\nablacell v_\disc\nonumber\\
&=\sum_{\cell\in\mesh}\left(\int_\cell f(\x) (\x-\overline{\x}_\cell)\d\x\right)\cdot\nablacell v_\disc
+\sum_{\cell\in\mesh}\left(\int_\cell f(\x) \e_\cell\d\x\right)\cdot\nablacell v_\disc.
\label{for.grouping}
\end{align}
Let $g\in L^2(\O)$ and $v_\disc$ be the solution to the gradient
scheme \eqref{base.GS} with right-hand side $g$ instead of $f$. Combining
\eqref{GS.sub}, \eqref{for.grouping} and the definition of $v_\disc$, we find
\begin{align}\label{group.1}
& \int_\O g(\x)(\Pi_\disc u_\discs-\Pi_\disc u_\disc)(\x)\d\x
=\int_\O f(\x)(\Pi_\discs v_\disc-\Pi_\disc v_\disc)(\x)\d\x  \nonumber\\
& \qquad =\sum_{\cell\in\mesh}\left(\int_\cell f(\x) (\x-\overline{\x}_\cell)\d\x\right)\cdot\nablacell v_\disc 
  +\sum_{\cell\in\mesh}\left(\int_\cell f(\x) \e_\cell\d\x\right)\cdot\nablacell v_\disc\nonumber \\
  & \qquad =:T_1+T_2.
\end{align}
Since $\overline{\x}_\cell$ is the center of mass of $\cell$, we have $\int_\cell (\x-\overline{\x}_\cell)\d\x=0$. Hence,
letting $f_\cell$ to be the average value of $f$ on $\cell$, using \eqref{est.psi} and
\eqref{est.grad}, $T_1$ is estimated:
\begin{align}
T_1={}&\sum_{\cell\in\mesh}\left(\int_\cell (f(\x)-f_\cell) (\x-\overline{\x}_\cell)\d\x\right)\cdot\nablacell v_\disc \nonumber\\
\le{}&\sum_{\cell\in\mesh}h_\cell\norm{f-f_\cell}{L^2(\cell)} |\cell|^{\frac{1}{2}}|\nablacell v_\disc|\nonumber\\
\lesssim{}&h_\mesh^2\sum_{\cell\in\mesh}\norm{f}{H^1(\cell)} \norm{\nabla_\disc v_\disc}{L^2(\cell)^d}\nonumber\\
\lesssim{}& h_\mesh^2\norm{f}{H^1(\O)} \norm{\nabla_\disc v_\disc}{L^2(\O)^d}\lesssim
h_\mesh^2 \norm{f}{H^1(\O)}\norm{g}{L^2(\O)}.
\label{group.T1}
\end{align}
In the last line, we used the discrete Cauchy--Schwarz inequality and the stability property \eqref{est.stability} in Theorem \ref{th:error.est.PDE}.

We now estimate $T_2$. Let $\bv$ be the solution to \eqref{weak_state} with right-hand side $g$ instead of $f$,
and recall that $v_\disc$ is the solution to the gradient scheme for this continuous problem.
Hence, $\nabla_\cell v$ should be close to $\nabla\bv$ on the cell $\cell$. 
To use this proximity, write
\begin{align}
T_2={}&\sum_{\cell\in\mesh}\int_\cell f(\x) \e_\cell \cdot(\nablacell v_\disc-\nabla\bv(\x))\d\x
+\sum_{\cell\in\mesh_\groupcell}\int_\cell (f\nabla\bv)(\x)\cdot \e_\cell \d\x\nonumber\\
{}&+\sum_{\cell\not\in\mesh_\groupcell}\int_\cell (f\nabla\bv)(\x)\cdot \e_\cell \d\x
\nonumber\\
={}&T_{2,1}+T_{2,2}+T_{2,3},
\label{group.T2}
\end{align}
where $\mesh_\groupcell=\cup_{\patch\in\groupcell}\patch$ is the set of cells covered
by the patches.
Letting $\nabla_\mesh v$ be the piecewise constant function on $\mesh$ equal to $\nablacell v$
on each $\cell\in\mesh$, the bound $\sup_{\cell\in\mesh}|\e_\cell|\le h_\mesh$ gives
\[
|T_{2,1}|\le h_\mesh\norm{f}{L^2(\O)}\norm{\nabla_\mesh v_\disc-\nabla\bv}{L^2(\O)^d}.
\]
By \eqref{average.grad}, $\nabla_\mesh v_\disc=\pi_\mesh(\nabla_\disc v_\disc)$, where
$\pi_\mesh$ is the $L^2$ projector on piecewise constant functions (here, it is used
component-wise). Since $\pi_\mesh$
has norm $1$, the estimates \eqref{est.error.gs}, \eqref{est.CD}--\eqref{est.WD},
\eqref{est.psi} and the $H^2$ regularity property \eqref{assump.H2} for $\bv$ yield
\begin{align}
|T_{2,1}|\lesssim{}& h_\mesh \norm{f}{L^2(\O)}\left(
\norm{\pi_\mesh(\nabla_\disc v_\disc -\nabla\bv)}{L^2(\O)^d}
+\norm{\pi_\mesh(\nabla\bv) -\nabla\bv}{L^2(\O)^d}\right)\nonumber\\
\lesssim{}& h_\mesh \norm{f}{L^2(\O)}\left(
\norm{\nabla_\disc v_\disc -\nabla\bv}{L^2(\O)^d}
+h_\mesh\norm{\bv}{H^2(\O)}\right)\nonumber\\
\lesssim{}& h_\mesh^2 \norm{f}{L^2(\O)}\norm{g}{L^2(\O)}.
\label{group.T21}
\end{align}
The sum $T_{2,2}$ is estimated by using the patches. Let $\patch\in\groupcell$,
$K\in \patch$, and apply Lemma \ref{lem.est.moy} (see Appendix)
twice to $(U,V,O)=(K,B_\patch,U_\patch)$ and $(U,V,O)=
(U_\patch,B_\patch,U_\patch)$. Owing to Remark \ref{rem:reg.groupcell} and using the
upper bound on $\theta_\polyd$,
\begin{align*}
&\diam(U_\patch)\lesssim h_K\,,\quad\diam(U_\patch)^d\lesssim \diam(B_\patch)^d\lesssim |B_\patch|\le
|U_\patch|\\
&\mbox{and } |K|\lesssim h_K^d\lesssim \diam(B_\patch)^d\lesssim |B_\patch|.
\end{align*}
Hence,
\begin{align*}
\Big|\frac{1}{|K|}\int_K (f\nabla\bv)(\x)\d\x
&-\frac{1}{|U_\patch|}\int_{U_\patch}(f\nabla\bv)(\x)\d\x\Big|\\
\le{}&\Big|\frac{1}{|K|}\int_K (f\nabla\bv)(\x)\d\x
-\frac{1}{|B_\patch|}\int_{B_\patch}(f\nabla\bv)(\x)\d\x\Big|\\
&+\Big|\frac{1}{|B_\patch|}\int_{B_\patch} (f\nabla\bv)(\x)\d\x
-\frac{1}{|U_\patch|}\int_{U_\patch}(f\nabla\bv)(\x)\d\x\Big|\\
\lesssim{}&\left(\frac{{\rm diam}(U_\patch)^{d+1}}{|K|\,|B_\patch|}
+\frac{{\rm diam}(U_\patch)^{d+1}}{|B_\patch|\,|U_\patch|}\right)
\norm{f\nabla\bv}{W^{1,1}(U_\patch)^d}\\
\lesssim{}& \frac{h_\mesh}{|K|}\norm{f\nabla\bv}{W^{1,1}(U_\patch)^d}.
\end{align*}
Since the patches are pairwise disjoint and $\left|\sum_{\cell\in \patch} |\cell|\e_\cell\right|\le
|U_\patch|e_\groupcell$, by the $H^2$ regularity property \eqref{assump.H2},
\begin{align}
T_{2,2}={}&\sum_{\patch\in\groupcell} \sum_{K\in \patch} |K|\e_\cell\cdot\left(\frac{1}{|K|}\int_K (f\nabla\bv)(\x)\d\x\right)\nonumber\\
\lesssim{}&\sum_{\patch\in\groupcell} \sum_{K\in \patch} h_\mesh^2\norm{f\nabla\bv}{W^{1,1}(U_\patch)^d}\nonumber\\
{}&+\sum_{\patch\in\groupcell} \left(\frac{1}{|U_\patch|}\int_{U_\patch} (f\nabla\bv)(\x)\d\x\right)
\cdot \sum_{\cell\in \patch} |\cell|\e_\cell\nonumber\\
\lesssim{}&h_\mesh^2\norm{f\nabla\bv}{W^{1,1}(\O)^d}+e_\groupcell\norm{f\nabla\bv}{L^1(\O)^d}\nonumber\\
\lesssim{}&(h_\mesh^2+e_\groupcell)\norm{f}{H^1(\O)}\norm{\bv}{H^2(\O)}
\lesssim (h_\mesh^2+e_\groupcell)\norm{f}{H^1(\O)}\norm{g}{L^2(\O)}.
\label{estim.T22}\end{align}
To estimate $T_{2,3}$, let $(\strip_{H_i}(\rho_\groupcell))_{1\le i\le r_\groupcell}$ be the the strips covering $\Omega_\groupcell^c$.
Since $|\e_\cell|\le h_\mesh$, Lemma \ref{lemma.strip} yields
\begin{multline*}
|T_{2,3}|\le h_\mesh\int_{\Omega_\groupcell^c} |f\nabla\bv|\d\x
\le h_\mesh\sum_{i=1}^{r_\groupcell} \norm{f\nabla \bv}{L^1(\strip_{H_i}(\rho_\groupcell))^d}\\
\lesssim r_\groupcell \rho_\groupcell h_\mesh \norm{f\nabla \bv}{W^{1,1}(\O)^d}\le r_\groupcell \rho_\groupcell h_\mesh\norm{f}{H^1(\O)}\norm{g}{L^2(\O)}.
\end{multline*}
Plugging this estimate, \eqref{group.T21} and \eqref{estim.T22} in \eqref{group.T2} gives
\[
T_2\lesssim (h_\mesh^2+e_\groupcell+r_\groupcell\rho_\groupcell h_\mesh)\norm{f}{H^1(\O)}\norm{g}{L^2(\O)}
\]
which, combined with \eqref{group.T1} and \eqref{group.1}, leads to
\[
\int_\O g(\x)(\Pi_\disc u_\discs-\Pi_\disc u_\disc)(\x)\d\x
\lesssim (h_\mesh^2+e_\groupcell+r_\groupcell\rho_\groupcell h_\mesh)\norm{f}{H^1(\O)}\norm{g}{L^2(\O)}.
\]
Take the supremum over $g\in L^2(\O)$ with norm 1 to find
\be\label{group.end.step1}
\norm{\Pi_\disc u_\discs-\Pi_\disc u_\disc}{L^2(\O)}
\lesssim (h_\mesh^2+e_\groupcell+r_\groupcell\rho_\groupcell h_\mesh)\norm{f}{H^1(\O)}.
\ee

\medskip

\textbf{Step 2}: \emph{a weighted projection, and conclusion}.

Let $(w_\cell)_{\cell\in\mesh}$ be the functions given by Lemma \ref{lem.w.xK}
(see Appendix).
We define $\Pmesh:L^2(\O)\to L^2(\O)$ by
\[
\forall \phi\in L^2(\O)\,,\;\forall \cell\in\mesh\,,\;
(\Pmesh \phi)_{|K}= \frac{1}{|\cell|}\int_\cell \phi(\x)w_\cell(\x)\d\x.
\]
We have $\norm{\Pmesh}{L^2(\O)\to L^2(\O)}\le \max_{\cell\in\mesh}\norm{w_\cell}{L^\infty(\cell)}
\lesssim 1$. Moreover, by definition of $w_\cell$ and of $\Pi_\discs$, we have, on any
$\cell\in\mesh$,
\[
\Pmesh(\Pi_\discs u_\discs)=\frac{1}{|\cell|}\int_\cell (\Pi_\disc u_\discs + \nablacell
u_\discs\cdot (\x-\x_K))w_\cell(\x)\d\x=\Pi_\disc u_\discs.
\]
Hence, by \eqref{sc-discs},
\begin{align}
\norm{\Pi_\disc u_\discs-\Pmesh \bu}{L^2(\O)}
={}&\norm{\Pmesh(\Pi_\discs u_\discs-\bu)}{L^2(\O)}\nonumber\\
\lesssim{}& \norm{\Pi_\discs u_\discs-\bu}{L^2(\O)}
\lesssim h_\mesh^2\norm{f}{L^2(\O)}.
\label{est.PM}\end{align}
Using \eqref{est.phi.w} in Lemma \ref{lem.w.xK}, we have $\norm{\bu_\centers-\Pmesh\bu}{L^2(\O)}
\lesssim h_\mesh^2\norm{\bu}{H^2(\O)}$. Combined with \eqref{est.PM} and
using the $H^2$ regularity property \eqref{assump.H2}, this gives
\be\label{pt.superconv}
\norm{\Pi_\disc u_\discs-\bu_\centers}{L^2(\O)} \lesssim h_\mesh^2\norm{f}{L^2(\O)}.
\ee
The conclusion follows from this estimate and from \eqref{group.end.step1}. \end{proof}

We can now prove the super-convergence of TPFA schemes on triangular meshes.

\begin{proof}[Proof of Theorem \ref{th:super_cv.tpfa}]

On a classical TPFA triangulation as in Definition \ref{def:classical.tpfa.tri},
the TPFA scheme is an HMM scheme $\disc$, with
$\zeta_\disc$ depending only on an upper bound of $\theta_\polyd$ \cite[Section 5.3]{dro-10-uni}.
Theorem \ref{th-supercv-HMM-group} therefore applies. Notice that $\theta_\polyd$
is bounded by a constant only depending on \itr{}.
To conclude the proof,
we describe, for each type of TPFA triangulation, a patching $\groupcell$
such that (i) $e_\groupcell=0$, (ii) $\mu_\groupcell\le C$ (see \eqref{def:muG}), where
$C$ depends only on an upper bound of $\theta_\polyd$,
and (iii) $\O_\groupcell^c$ is contained in $r$ strips of size $Mh_\mesh$,
where $r$ and $M$ only depend on \itr{}. Theorem \ref{th:super_cv.tpfa} then follows immediately
from the second conclusion in Theorem \ref{th-supercv-HMM-group}.
In the rest of this proof, we use the same notation $\e_\cell=\overline{\x}_K-\x_K$ as in the proof of
Theorem \ref{th-supercv-HMM-group}.

\medskip

\emph{Subdivision}. Most triangles of $\polyd$ can be patched in pairs forming rhombuses. Two of such pairs
are illustrated in grey in Figure \ref{fig:classical_subd}, left. In such a rhombus,
each triangle $K_i$, $i=1,2$, is the symmetric of the other with respect to the center of the rhombus.
As a consequence, $\e_{K_1}=-\e_{K_2}$ and the corresponding patching satisfies $e_\groupcell=0$.
The region $\O_\groupcell^c$ consists of one layer of triangles around each
edge of the initial triangulation \itr{} -- examples of such triangles
are the dotted triangles in Figure \ref{fig:classical_subd} (left) -- and is
therefore contained in a fixed number of strips of width $h_\mesh$.

\medskip

\emph{Reproduction by symmetry}. The cells are
patched in four contiguous reproductions of \itr, as shown in grey in Figure \ref{fig:classical_subd} (center). The symmetries ensure that, with such a patching, $e_\groupcell=0$.
For an odd number of symmetries, $\O_\groupcell^c=\emptyset$. For
an even number of symmetries, $\O_\groupcell^c$ is made of the reproductions of
\itr{} along two edges of $\O$, and is thus contained in two strips of size $Mh_\mesh$
with $M$ depending only on \itr{}.

\medskip

\emph{Reproduction by translation}. Obtaining a conforming triangulation of $\O$ with
a reproduction by translation of \itr{} imposes some symmetry properties on
this initial triangulation. The vertices on the left side of \itr{} must match the vertices on the right side of \itr, and similarly for the vertices on the top and bottom sides. 
Applying Lemma \ref{lem:compensation.triangles} in the appendix to the
unit square $Q$ shows that,
for such a triangulation, $\sum_{K\in \polyd_0}|K|\e_\cell=0$.
Indeed, each left boundary edge of \itr{} is matched by a right boundary edge
(same for top/bottom), and for these edges the quantities $|\vertex^\edge_i-\overline{\x}_Q|$ are identical whilst $\bfn_{Q,\edge}$ are opposite.
Hence, the patching $\groupcell$ made of the reproductions of the initial triangulation, as shown
in grey in Figure \ref{fig:classical_subd} (right), satisfies $e_\groupcell=0$
and $\O_\groupcell^c=\emptyset$.
\end{proof}

\begin{remark}
The property $\sum_{K\in \polyd_0}|K|\e_\cell=0$,
used in the proof above for meshes obtained by reproduction by translation, occurs with initial triangulations
of other polygons than the unit square. For example, if a conforming tessellation of a region is created by translating an elementary triangulation \itr{} of an hexagon $Q$, the edges of \itr{} on the opposite boundaries of this hexagon must
match and $\sum_{K\in \polyd_0}|K|\e_\cell=0$.
\end{remark}

\section{Numerical tests}\label{sec:num.tests}

In all the following tests, we consider \eqref{base} with $\O=(0,1)^2$,
$A={\rm Id}$, $\bu(x,y)=16x(1-x)y(1-y)$ and $f=-\Delta \bu$. We measure the following relative
$L^2$ errors of the HMM or TPFA schemes on $\bu$ and its gradient:
\[
\err_\disc(\bu)=\frac{\norm{\Pi_\disc u_\disc -\bu_\centers}{L^2(\O)}}{\norm{\bu_\centers}{L^2(\O)}}
\quad\mbox{ and }\quad
\err_\disc(\nabla\bu)=\frac{\norm{\nabla_\disc u_\disc -(\nabla\bu)_\centers}{L^2(\O)^d}}{\norm{(\nabla\bu)_\centers}{L^2(\O)^d}},
\]
where $(\nabla\bu)_\centers$ is the piecewise constant function equal, for all $K\in\mesh$,
to $\nabla\bu(\x_K)$ on $K$. These errors are plotted against the mesh size $h_\mesh$.
To test the super-convergence of 
the modified HMM method of Section \ref{sec:mod.hmm}, we use the following measure which,
according to \eqref{pt.superconv} (a direct consequence of the super-convergence
result \eqref{sc-discs} of $u_\discs$), should decrease as $h_\mesh^2$, even in
the absence of local compensation:
\[
\err_{\discs}(\bu)=\frac{\norm{\Pi_\disc u_\discs -\bu_\centers}{L^2(\O)}}{\norm{\bu_\centers}{L^2(\O)}}.
\]

\subsection{HMM method}\label{sec:tests.HMM}

The super-convergence for HMM schemes with $\centers$ given by the centers of mass of the cells has
already been numerically illustrated in a number of test cases, see e.g. \cite{HH08}. We rather
focus here on two cases where the points in $\centers$ are shifted away from the centers of mass.

\medskip

\textbf{Test 1}: \emph{Local compensation}

We consider a cartesian grid in which, every other cell, $\x_K$ is shifted to the top-right
or bottom-left of the centers of mass; see Figure \ref{fig:test1}, left. Grouping the cells by neighbourhing pairs, as represented
by the greyed area in this figure, gives a patching $\groupcell$ such that $e_\groupcell=0$ and $\O_\groupcell^c=\emptyset$.
Theorem \ref{th-supercv-HMM-group} therefore predicts the $\mathcal O(h_\mesh^2)$ estimate
on $\err_\disc(\bu)$ that is observed in Figure \ref{fig:test1}, right. The modified HMM method is also super-convergent and, quite naturally, beats the error of the HMM method (by a factor 2).

\begin{figure}[htb]
\begin{center}
\begin{tabular}{lr}
\raisebox{1em}{\resizebox{0.35\linewidth}{!}{\input{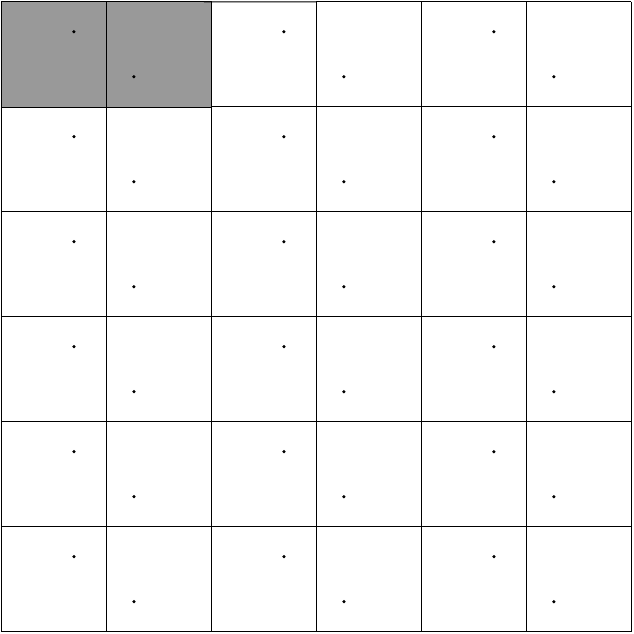_t}}}
&
\resizebox{0.5\linewidth}{!}{%
	 \begin{tikzpicture}[scale=1]
			 \begin{loglogaxis}[
					 xmin = .00530400000000000000, 
					 legend style = {
					   legend pos = south east
					 }
				 ]
		     \addplot[dashed,mark=*,blue,mark options={solid}] table[x=meshsize,y={create col/linear regression={y=erL2_gradp}}] {HMM.test1.compensation.dat}
		     coordinate [pos=0.75] (A)
		     coordinate [pos=1.00] (B);
		     \xdef\slopeb{\pgfplotstableregressiona}
		     \draw (A) -| (B) node[pos=0.75,anchor=east] {\pgfmathprintnumber{\slopeb}};
		     \addplot[mark=*,red,mark options={solid}] table[x=meshsize,y={create col/linear regression={y=erL2_p}}] {HMM.test1.compensation.dat}
		     coordinate [pos=0.75] (A)
		     coordinate [pos=1.00] (B);
		     \xdef\slopea{\pgfplotstableregressiona}
		     \draw (A) -| (B) node[pos=0.75,anchor=east] {\pgfmathprintnumber{\slopea}};
		     \addplot[mark=square*,black,mark options={solid}] table[x=meshsize,y={create col/linear regression={y=erL2_p}}] {modifiedHMM.test1.supercv.dat}
		     coordinate [pos=0.75] (A)
		     coordinate [pos=1.00] (B);
		     \xdef\slopec{\pgfplotstableregressiona}
		     \draw (A) -| (B) node[pos=0.75,anchor=east] {\pgfmathprintnumber{\slopec}};
		     \legend{$\err_\disc(\nabla\bu)$,$\err_\disc(\bu)$,$\err_\discs(\bu)$};
		   \end{loglogaxis}
		 \end{tikzpicture}
}
\end{tabular}
\caption{Test 1: position of the points $\x_K$ (left), and rates of convergence (right) for the HMM and modified HMM methods.}
\label{fig:test1}
\end{center}
\end{figure}

\smallskip
\textbf{Test 2}: \emph{Loss of super-convergence for HMM schemes}

Still using a cartesian grid, the positions of $\x_K$ are inspired by the counter-example of \cite{OMN09} to super-convergence for TPFA in dimension 1. These positions are presented in Figure \ref{fig:test2}, left. The rates observed on the right of the figure show that the super-convergence of HMM is lost, which seems to indicate that Theorem \ref{th-supercv-HMM-group} is relatively optimal,
i.e. that even for very simple grids, HMM is not super-convergent if some local compensations do not occur. As expected, the modified HMM method remains super-convergent for this case.

\begin{figure}[htb]
\begin{center}
\begin{tabular}{lr}
\raisebox{1em}{\resizebox{0.35\linewidth}{!}{\input{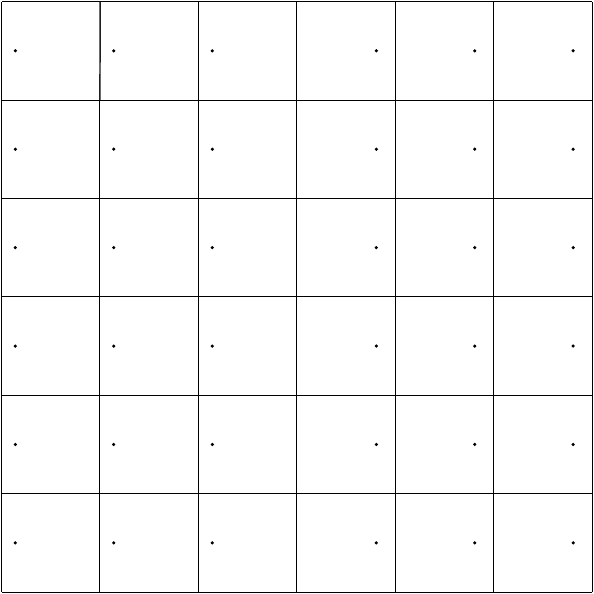_t}}}
&
\resizebox{0.5\linewidth}{!}{%
	 \begin{tikzpicture}[scale=1]
			 \begin{loglogaxis}[
					 xmin = .00530400000000000000, 
					 legend style = {
					   legend pos = south east
					 }
				 ]
		     \addplot[dashed,mark=*,blue,mark options={solid}] table[x=meshsize,y={create col/linear regression={y=erL2_gradp}}] {HMM.test2.loss_supercv.dat}
		     coordinate [pos=0.75] (A)
		     coordinate [pos=1.00] (B);
		     \xdef\slopeb{\pgfplotstableregressiona}
		     \draw (A) -| (B) node[pos=0.75,anchor=east] {\pgfmathprintnumber{\slopeb}};
		     \addplot[mark=*,red,mark options={solid}] table[x=meshsize,y={create col/linear regression={y=erL2_p}}] {HMM.test2.loss_supercv.dat}
		     coordinate [pos=0.75] (A)
		     coordinate [pos=1.00] (B);
		     \xdef\slopea{\pgfplotstableregressiona}
		     \draw (A) -| (B) node[pos=0.75,anchor=east] {\pgfmathprintnumber{\slopea}};
		     \addplot[mark=square*,black,mark options={solid}] table[x=meshsize,y={create col/linear regression={y=erL2_p}}] {modifiedHMM.test2.supercv.dat}
		     coordinate [pos=0.75] (A)
		     coordinate [pos=1.00] (B);
		     \xdef\slopec{\pgfplotstableregressiona}
		     \draw (A) -| (B) node[pos=0.75,anchor=east] {\pgfmathprintnumber{\slopec}};
		     \legend{$\err_\disc(\nabla\bu)$,$\err_\disc(\bu)$,$\err_\discs(\bu)$};
		   \end{loglogaxis}
		 \end{tikzpicture}
}
\end{tabular}
\caption{Test 2: position of the points $\x_K$ (left), and rates of convergence (right) for the HMM and modified HMM methods.}
\label{fig:test2}
\end{center}
\end{figure}

\subsection{TPFA finite volumes on triangles}\label{sec:num.TPFA}

We illustrate here the result of Theorem \ref{th:super_cv.tpfa}, considering
three families of triangulations corresponding to the classical TPFA triangulations
as in Definition \ref{def:classical.tpfa.tri}.
Many previous numerical tests (see, e.g., \cite{dom-05-fin,GagneuxMadaune})
have numerically demonstrated the super-convergence
of TPFA on such meshes but, to our knowledge, no complete rigorous proof of this phenomenon has been 
provided so far.
All numerical results show a clear order 2 rate of convergence, confirming
Theorem \ref{th:super_cv.tpfa}.

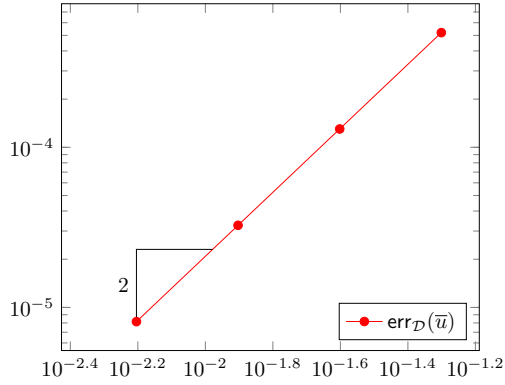
\begin{figure}
\begin{center}
\resizebox{.55\linewidth}{!}{
 \begin{tikzpicture}[scale=1]
			 \begin{loglogaxis}[
					 xmin = .00375000000000000000, 
					 legend style = {
					   legend pos = south east
					 }
				 ]
		     \addplot[mark=*,red,mark options={solid}] table[x=meshsize,y={create col/linear regression={y=erL2_p}}] {TPFA.subd.dat}
		     coordinate [pos=0.75] (A)
		     coordinate [pos=1.00] (B);
		     \xdef\slopea{\pgfplotstableregressiona}
		     \draw (A) -| (B) node[pos=0.75,anchor=east] {\pgfmathprintnumber{\slopea}};
		     \legend{$\err_\disc(\bu)$};
		   \end{loglogaxis}
		 \end{tikzpicture}
}
\end{center}
\label{fig:subd_trianglespattern}
\caption{$L^2$ rate of convergence of TPFA on the family of meshes made of subdivisions of the
initial triangulation in Figure \ref{fig:classical_subd} (left).}
\end{figure}

\begin{figure}[htb]
\begin{center}
\begin{tabular}{l@{\qquad}r}
\raisebox{6em}{\begin{tabular}{l}
{\hspace*{3em}\resizebox{0.1\linewidth}{!}{\input{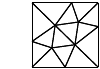_t}}}\\
\resizebox{.45\linewidth}{!}{
 \begin{tikzpicture}[scale=1]
			 \begin{loglogaxis}[
					 xmin = .00375000000000000000, 
					 legend style = {
					   legend pos = south east
					 }
				 ]
		     \addplot[mark=*,red,mark options={solid}] table[x=meshsize,y={create col/linear regression={y=erL2_p}}] {TPFA.sym.dat}
		     coordinate [pos=0.75] (A)
		     coordinate [pos=1.00] (B);
		     \xdef\slopea{\pgfplotstableregressiona}
		     \draw (A) -| (B) node[pos=0.75,anchor=east] {\pgfmathprintnumber{\slopea}};
		     \legend{$\err_\disc(\bu)$};
		   \end{loglogaxis}
		 \end{tikzpicture}
}
\end{tabular}}
&
\raisebox{6em}{\begin{tabular}{l}
{\hspace*{3em}\resizebox{0.1\linewidth}{!}{\input{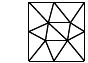_t}}}\\
\resizebox{.45\linewidth}{!}{
 \begin{tikzpicture}[scale=1]
			 \begin{loglogaxis}[
					 xmin = .00375000000000000000, 
					 legend style = {
					   legend pos = south east
					 }
				 ]
		     \addplot[mark=*,red,mark options={solid}] table[x=meshsize,y={create col/linear regression={y=erL2_p}}] {TPFA.trans.dat}
		     coordinate [pos=0.75] (A)
		     coordinate [pos=1.00] (B);
		     \xdef\slopea{\pgfplotstableregressiona}
		     \draw (A) -| (B) node[pos=0.75,anchor=east] {\pgfmathprintnumber{\slopea}};
		     \legend{$\err_\disc(\bu)$};
		   \end{loglogaxis}
		 \end{tikzpicture}
}
\end{tabular}}
\end{tabular}
\end{center}
\caption{\label{fig:pattern.sym} Initial triangulations and $L^2$ rates of convergence 
of TPFA on families of meshes constructed by reproductions. Left: reproductions by
symmetry. Right: reproduction by translation.}
\end{figure}

\section{Conclusion}\label{sec:concl}

The contributions of the paper can be summarised as follows.
We  first establish an improved $L^2$ estimate for gradient schemes, in any dimension $d$,
which is more precise than the 
known ones of \cite{koala,eym-12-sma}. This estimate yields better rates for a number of gradient schemes.

Secondly, a modified HMM scheme with unconditional super-convergence (in dimension $d\le 3$) is introduced. This modified scheme uses a piecewise linear, 
instead of piecewise constant, approximation of the test functions. This approximation was introduced in \cite{CAN08},
but only as a post-processing tool. By using this approximation in the design of the modified HMM method, we create a 
method that is super-convergent for any choice of the cell points as a consequence of the improved $L^2$
estimate for gradient schemes.

The next contribution is a new $L^2$ error estimate for HMM, that involves patches of cells. When 
these patches can be chosen so that a compensation occurs, within each patch, between the cell points and the 
centers of mass, this new $L^2$ estimate provides the super-convergence of HMM. 
The numerical results show that in the absence of patches, super-convergence may fail for HMM 
schemes, but holds true for the modified HMM scheme. Moreover the super-convergence is recovered if local compensation 
occurs.

Finally, perhaps the main contribution of this work, we prove
the super-convergence of the TPFA finite volume scheme on the kinds of meshes used in 2D benchmarking of this method.
This result is a consequence of all the previous ones.
Numerical tests confirming this super-convergence are presented. 

\bigskip
\section{Appendix}\label{sec:appen}

\subsection{Gradient schemes with approximate diffusion}\label{sec:appen.modified}

Let $\disc$ be a gradient discretisation in the sense of Definition \ref{def:GD}.
As per \eqref{base.GS}, the corresponding gradient scheme for \eqref{base} is
\be\label{gs.appendix}
\begin{aligned}
\mbox{Find $u_\disc\in X_{\disc,0}$ such that, for all $v_\disc\in X_{\disc,0}$,}\\
\int_\O A\nabla_\disc u_\disc\cdot\nabla_\disc v_\disc \d\x
=\int_\O f\Pi_\disc v_\disc \d\x.
\end{aligned}
\ee
For low-order methods, it is however customary to replace $A$ with a piecewise
approximation on the mesh. More precisely, if $\polyd$ is a polytopal mesh of $\O$,
we denote by $A_\mesh$ the $L^2$ projection of $A$ on the piecewise constant (matrix-valued) functions
on $\mesh$, that is
\be\label{def:Amesh}
\forall\cell\in\mesh\,,\; A_\mesh = \frac{1}{|\cell|}\int_\cell A(\x)\d\x\mbox{ on $\cell$},
\ee
and we consider the modified gradient scheme
\be\label{gs.modified}
\begin{aligned}
\mbox{Find $\widetilde{u}_\disc\in X_{\disc,0}$ such that, for all $v_\disc\in X_{\disc,0}$,}\\
\int_\O A_\mesh\nabla_\disc \widetilde{u}_\disc\cdot\nabla_\disc v_\disc \d\x
=\int_\O f\Pi_\disc v_\disc \d\x.
\end{aligned}
\ee

The following two propositions show that, for low order methods (for which it is
expected that $\WS_\disc(\bu)=\mathcal O(h_\mesh)$), both the basic rate of convergence and the rate
of super-convergence are not degraded by considering \eqref{gs.modified} instead of
\eqref{gs.appendix}. In the following, we use the notation $\mathcal A\lesssim \mathcal B$
as a shorthand for ``$\mathcal A\le C\mathcal B$ for some $C$ depending only on $A$ and $\O$''.

\begin{proposition}\label{prop.comp.modified}
Let $\disc$ be a gradient discretisation of $\O$ and $\polyd$ a polytopal mesh of $\O$. 
Assume that \eqref{assump} holds, $A$ is Lipschitz-continuous on each
$\cell\in\mesh$, and $A_\mesh$ is defined by \eqref{def:Amesh}.
If $u_\disc$ and $\widetilde{u}_\disc$ are, respectively, the solutions to
\eqref{gs.appendix} and \eqref{gs.modified}, then
\be\label{prop.comp.0}
\norm{\Pi_\disc \widetilde{u}_\disc-\Pi_\disc u_\disc}{L^2(\O)}+
\norm{\nabla_\disc \widetilde{u}_\disc-\nabla_\disc u_\disc}{L^2(\O)^d}\lesssim
h_\mesh\norm{f}{L^2(\O)}.
\ee
As a consequence,
\be\label{prop.comp.1}
\norm{\Pi_\disc \widetilde{u}_\disc-\bu}{L^2(\O)}+
\norm{\nabla_\disc \widetilde{u}_\disc-\nabla\bu}{L^2(\O)^d}\lesssim
\WS_\disc(\bu)+h_\mesh\norm{f}{L^2(\O)}.
\ee
\end{proposition}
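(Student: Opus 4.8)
The proposition asks to compare the solutions of the exact-diffusion gradient scheme \eqref{gs.appendix} and the approximate-diffusion scheme \eqref{gs.modified}, where $A$ is replaced by its cellwise $L^2$ average $A_\mesh$. The key point is that for Lipschitz $A$ on each cell, $\norm{A-A_\mesh}{L^\infty(\cell)}\lesssim h_\cell$ uniformly, so the perturbation of the bilinear form is of order $h_\mesh$. Once \eqref{prop.comp.0} is established, the consequence \eqref{prop.comp.1} follows immediately by the triangle inequality and the basic error estimate \eqref{est.error.gs} of Theorem \ref{th:error.est.PDE}.

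\textbf{Plan for \eqref{prop.comp.0}.} The plan is to subtract the two schemes and test against a well-chosen element. Subtracting \eqref{gs.modified} from \eqref{gs.appendix}, both of which have the same right-hand side $\int_\O f\Pi_\disc v_\disc\d\x$, gives for all $v_\disc\in X_{\disc,0}$
\be\label{comp.diff.scheme}
\int_\O A\nabla_\disc u_\disc\cdot\nabla_\disc v_\disc\d\x
=\int_\O A_\mesh\nabla_\disc\widetilde{u}_\disc\cdot\nabla_\disc v_\disc\d\x.
\ee
I would rewrite this to isolate the difference $w_\disc:=\widetilde{u}_\disc-u_\disc$. Adding and subtracting $\int_\O A_\mesh\nabla_\disc u_\disc\cdot\nabla_\disc v_\disc\d\x$ on one side, \eqref{comp.diff.scheme} becomes
\[
\int_\O A_\mesh\nabla_\disc w_\disc\cdot\nabla_\disc v_\disc\d\x
=\int_\O (A-A_\mesh)\nabla_\disc u_\disc\cdot\nabla_\disc v_\disc\d\x.
\]
The left-hand side is the coercive bilinear form associated with $A_\mesh$ (which inherits the ellipticity constant of $A$ since it is a convex average). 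Taking $v_\disc=w_\disc$ and using coercivity gives a lower bound $\gtrsim\norm{\nabla_\disc w_\disc}{L^2(\O)^d}^2$ on the left. On the right, I would use $\norm{A-A_\mesh}{L^\infty(\O)}\lesssim h_\mesh$ together with the Cauchy--Schwarz inequality and the stability bound \eqref{est.stability} for $\norm{\nabla_\disc u_\disc}{L^2(\O)^d}\lesssim\norm{f}{L^2(\O)}$. This yields $\norm{\nabla_\disc w_\disc}{L^2(\O)^d}\lesssim h_\mesh\norm{f}{L^2(\O)}$, and then $\norm{\Pi_\disc w_\disc}{L^2(\O)}\lesssim h_\mesh\norm{f}{L^2(\O)}$ via the discrete Poincaré inequality \eqref{def.CD} (recall $C_\disc$ is bounded). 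This establishes \eqref{prop.comp.0}.

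\textbf{Main obstacle and conclusion.} The only genuine technical point is the estimate $\norm{A-A_\mesh}{L^\infty(\cell)}\lesssim h_\cell$, which follows from the Lipschitz continuity of $A$ on each cell: since $A_\mesh$ is the average of $A$ over $\cell$ and $\cell$ has diameter $h_\cell$, the difference $A(\x)-A_\mesh$ is bounded by the Lipschitz constant times $h_\cell$ at every $\x\in\cell$. One subtlety worth noting is that $A_\mesh$ must remain uniformly elliptic with a constant independent of the mesh so that the coercivity argument on the left-hand side goes through; this holds because averaging a uniformly elliptic (and here symmetric) matrix-valued function preserves the ellipticity bounds. Finally, for \eqref{prop.comp.1} I would simply write $\Pi_\disc\widetilde{u}_\disc-\bu=(\Pi_\disc\widetilde{u}_\disc-\Pi_\disc u_\disc)+(\Pi_\disc u_\disc-\bu)$ and similarly for the gradient, bound the first parenthesis by \eqref{prop.comp.0} and the second by \eqref{est.error.gs}, which gives the $\WS_\disc(\bu)+h_\mesh\norm{f}{L^2(\O)}$ bound. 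I expect none of these steps to present a real difficulty; the proposition is essentially a perturbation lemma, and the constant $C$ depends only on $A$ (through its ellipticity constant and Lipschitz bound) and $\O$ (through the Poincaré constant bound), as stated.
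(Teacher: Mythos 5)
Your proof is correct and follows essentially the same perturbation argument as the paper; the only immaterial difference is that you put the coercive form with $A_\mesh$ on the left of the error equation and invoke the stability \eqref{est.stability} of $u_\disc$, whereas the paper keeps $A$ on the left and instead verifies the stability of $\widetilde{u}_\disc$ — both variants rest on the same three ingredients (uniform ellipticity of $A_\mesh$ inherited by averaging, the bound $\norm{A-A_\mesh}{L^\infty(\O)}\lesssim h_\mesh$ from cellwise Lipschitz continuity, and the discrete Poincar\'e inequality via $C_\disc$). Your deduction of \eqref{prop.comp.1} by triangle inequality and \eqref{est.error.gs} also matches the paper's.
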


\begin{proof}
We first notice that, once it is known that $A$ is Lipschitz-continuous on each
$\cell\in\mesh$, the maximum of the Lipschitz constants of $(A_{|\cell})_{\cell\in\mesh}$
is actually independent of $\mesh$. This entails
\be\label{est.lip.A}
\norm{A-A_\mesh}{L^\infty(\O)}\lesssim h_\mesh.
\ee
We have, for $v_\disc\in X_{\disc,0}$,
\[
\int_\O A\nabla_\disc\widetilde{u}_\disc\cdot\nabla_\disc v_\disc\d\x
=\int_\O f\Pi_\disc v_\disc\d\x+\int_\O (A-A_\mesh)\nabla_\disc\widetilde{u}_\disc\cdot
\nabla_\disc v_\disc\d\x.
\]
Subtracting the gradient scheme \eqref{gs.appendix} and using \eqref{est.lip.A}, we infer
\begin{align}
\int_\O A(\nabla_\disc\widetilde{u}_\disc-u_\disc)\cdot\nabla_\disc v_\disc\d\x
&=\int_\O (A-A_\mesh)\nabla_\disc\widetilde{u}_\disc\cdot
\nabla_\disc v_\disc\d\x\label{comp.0}\\
&\lesssim h_\mesh \norm{\nabla_\disc \widetilde{u}_\disc}{L^2(\O)^d}\norm{\nabla_\disc v_\disc}{L^2(\O)^d}.
\label{comp.1}
\end{align}
It is clear that $\widetilde{u}_\disc$ still satisfies the stability property \eqref{est.stability}
(to verify this, take $v_\disc=\widetilde{u}_\disc$ in \eqref{gs.modified}, use the definition \eqref{def.CD} of
$C_\disc$, and the fact that $A_\mesh$ is uniformly coercive with the same coercivity
constant as $A$). Hence, choosing $v_\disc=\widetilde{u}_\disc-u_\disc$ in \eqref{comp.1},
\[
\norm{\nabla_\disc\widetilde{u}_\disc-\nabla_\disc u_\disc}{L^2(\O)^d}
\lesssim h_\mesh \norm{f}{L^2(\O)}.
\]
The proof of \eqref{prop.comp.0} is complete by recalling the definition \eqref{def.CD} of $C_\disc$.
The estimate \eqref{prop.comp.1} follows from a triangle inequality (introducing $\Pi_\disc v_\disc$ and $\nabla_\disc v_\disc$), \eqref{prop.comp.0}, and the estimate \eqref{est.error.gs}
in Theorem \ref{th:error.est.PDE}.
\end{proof}

\begin{proposition}
Under the assumptions of Proposition \ref{prop.comp.modified}, let us
moreover suppose that the $H^2$ regularity property \eqref{assump.H2} holds.
We also assume that, for all $\phi\in H^1_0(\O)\cap H^2(\O)$,
\be\label{est.WS}
\WS_\disc(\phi)\lesssim \left(\norm{A\nabla\phi}{L^2(\O)^d}+\norm{\phi}{H^2(\O)}\right)h_\mesh.
\ee
Let $u_\disc$ and $\widetilde{u}_\disc$ be, respectively, the solutions to \eqref{gs.appendix} and
\eqref{gs.modified}. Then,
\be\label{est.supercv.modified}
\norm{\Pi_\disc u_\disc-\Pi_\disc \widetilde{u}_\disc}{L^2(\O)}\lesssim h_\mesh^2\norm{f}{L^2(\O)}.
\ee
\end{proposition}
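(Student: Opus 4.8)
The plan is to run an Aubin--Nitsche duality argument on the difference $\delta_\disc:=u_\disc-\widetilde u_\disc\in X_{\disc,0}$, using crucially that $A_\mesh$ is the \emph{$L^2$-orthogonal} projection of $A$. Subtracting \eqref{gs.appendix} from \eqref{gs.modified} (this is \eqref{comp.0}) gives, for all $v_\disc\in X_{\disc,0}$,
\[
a_\disc(\delta_\disc,v_\disc)=\int_\O (A_\mesh-A)\nabla_\disc\widetilde u_\disc\cdot\nabla_\disc v_\disc\d\x.
\]
Setting $g=\Pi_\disc\delta_\disc/\norm{\Pi_\disc\delta_\disc}{L^2(\O)}$ and letting $\varphi_{g,\disc}\in X_{\disc,0}$ solve the dual gradient scheme \eqref{adj.GS}, the choice $w_\disc=\delta_\disc$ in \eqref{adj.GS} and $v_\disc=\varphi_{g,\disc}$ above yields
\[
\norm{\Pi_\disc\delta_\disc}{L^2(\O)}=(g,\Pi_\disc\delta_\disc)=a_\disc(\delta_\disc,\varphi_{g,\disc})=\int_\O (A_\mesh-A)\nabla_\disc\widetilde u_\disc\cdot\nabla_\disc\varphi_{g,\disc}\d\x.
\]

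The heart of the proof is to extract a second power of $h_\mesh$ from this integral, which a crude bound by $\norm{A_\mesh-A}{L^\infty(\O)}\lesssim h_\mesh$ (see \eqref{est.lip.A}) times the two gradients would only give at order $h_\mesh$. First I would decompose each discrete gradient into its cell-averaged part and a fluctuation: writing $\pi_\mesh$ for the cellwise $L^2$ projection, \eqref{average.grad} gives $\pi_\mesh\nabla_\disc v=\nabla_\mesh v$ (piecewise constant on the cells), and I set $\mathsf f(v):=\nabla_\disc v-\pi_\mesh\nabla_\disc v$. Because $A_\mesh$ is defined by \eqref{def:Amesh}, one has $\int_\cell(A_\mesh-A)\d\x=0$ for every $\cell\in\mesh$, so the product of the two piecewise-constant parts integrates to zero:
\[
\int_\O (A_\mesh-A)\,\pi_\mesh\nabla_\disc\widetilde u_\disc\cdot\pi_\mesh\nabla_\disc\varphi_{g,\disc}\d\x=0.
\]
Hence only the three cross/fluctuation terms survive, and I would bound them by $\norm{A_\mesh-A}{L^\infty(\O)}$ times products in which at least one factor is $\norm{\mathsf f(\widetilde u_\disc)}{L^2(\O)^d}$ or $\norm{\mathsf f(\varphi_{g,\disc})}{L^2(\O)^d}$.

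It then remains to show that each fluctuation is $\mathcal O(h_\mesh)$. Writing $\mathsf f(\widetilde u_\disc)=(\nabla_\disc\widetilde u_\disc-\nabla\bu)-\pi_\mesh(\nabla_\disc\widetilde u_\disc-\nabla\bu)+(\nabla\bu-\pi_\mesh\nabla\bu)$ and using that $\pi_\mesh$ has norm $1$, the first two terms are controlled by \eqref{prop.comp.1} together with the hypothesis \eqref{est.WS} and the $H^2$ bound \eqref{assump.H2}, while the last is controlled by the Poincar\'e--Wirtinger estimate \eqref{est.psi}; this gives $\norm{\mathsf f(\widetilde u_\disc)}{L^2(\O)^d}\lesssim h_\mesh\norm{f}{L^2(\O)}$. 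An identical argument, using the error estimate \eqref{est.error.gs} of Theorem \ref{th:error.est.PDE} for the dual scheme and the $H^2$ regularity of $\varphi_g$ (recall $\norm{g}{L^2(\O)}=1$), gives $\norm{\mathsf f(\varphi_{g,\disc})}{L^2(\O)^d}\lesssim h_\mesh$. Combining with the stability bounds $\norm{\nabla_\disc\widetilde u_\disc}{L^2(\O)^d}\lesssim\norm{f}{L^2(\O)}$ and $\norm{\nabla_\disc\varphi_{g,\disc}}{L^2(\O)^d}\lesssim 1$ from \eqref{est.stability}, each of the three surviving terms is $\lesssim h_\mesh^2\norm{f}{L^2(\O)}$, which yields \eqref{est.supercv.modified}. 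The main obstacle is the middle step: recognising that the leading-order contribution cancels thanks to the orthogonality of the $L^2$ projection $A_\mesh$, and organising the decomposition so that every remaining term carries one fluctuation factor (hence one extra power of $h_\mesh$).
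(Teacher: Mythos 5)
Your proof is correct. It shares with the paper's proof the duality skeleton --- test \eqref{comp.0} with the discrete dual solution and identify $\norm{\Pi_\disc(u_\disc-\widetilde{u}_\disc)}{L^2(\O)}$ with $\int_\O(A_\mesh-A)\nabla_\disc\widetilde{u}_\disc\cdot\nabla_\disc\varphi_{g,\disc}\d\x$ --- and the same key cancellation, namely that $\int_\cell(A-A_\mesh)\d\x=0$ on each cell is what supplies the second power of $h_\mesh$. But the way you deploy that cancellation is genuinely different. The paper swaps the discrete gradients for the exact ones in two successive steps, each swap costing $\norm{A-A_\mesh}{L^\infty(\O)}\times\mathcal O(h_\mesh)$, and then applies the orthogonality to the \emph{continuous} product: it writes $\int_\O(A-A_\mesh)_{i,j}\partial_j\bu\,\partial_i\varphi_g\d\x=\int_\O(A-A_\mesh)_{i,j}\left[\partial_j\bu\,\partial_i\varphi_g-\pi_\mesh(\partial_j\bu\,\partial_i\varphi_g)\right]\d\x$ and invokes an $L^1$--$W^{1,1}$ oscillation estimate for the scalar functions $\partial_j\bu\,\partial_i\varphi_g$. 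You apply the orthogonality to the \emph{discrete} gradients instead: splitting each into its cellwise average plus a fluctuation, the average--average term vanishes exactly, and every surviving term carries at least one fluctuation, which you correctly bound by $\mathcal O(h_\mesh)$ using \eqref{prop.comp.1} (resp.\ \eqref{est.error.gs} for the dual solution), \eqref{est.WS}, \eqref{assump.H2}, and the cellwise Poincar\'e--Wirtinger inequality \eqref{est.psi}. The trade-off is roughly even: your route stays entirely in $L^2$ and never differentiates the product $\nabla\bu\cdot\nabla\varphi_g$ (the paper's $W^{1,1}$ bound hides a product rule plus Cauchy--Schwarz on two $H^1$ factors), at the price of invoking both the primal and the dual $\mathcal O(h_\mesh)$ gradient error estimates; both routes rely on the same implicit cell-shape regularity through \eqref{est.psi} and its $L^1$ analogue. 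Two minor points: your appeal to \eqref{average.grad} is unnecessary --- and that identity is specific to HMM gradient discretisations, whereas this proposition concerns a general $\disc$ --- since your argument only uses that $\pi_\mesh\nabla_\disc v$ is piecewise constant and that $\pi_\mesh$ is an $L^2$ contraction, so it applies verbatim in the general setting; and one should dispose of the trivial case $\Pi_\disc u_\disc=\Pi_\disc\widetilde{u}_\disc$ before normalising $g$.
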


\begin{proof}
Let $g\in L^2(\O)$, $\varphi_g$ be the solution to \eqref{base} with $g$ instead of $f$,
and $\varphi_{g,\disc}$ be the solution to \eqref{gs.appendix} with $g$ instead of $f$.
By \eqref{est.error.gs}, \eqref{est.WS} and \eqref{assump.H2},
\[
\norm{\nabla_\disc \varphi_{g,\disc} -\nabla\varphi_g}{L^2(\O)^d}\lesssim h_\mesh\norm{g}{L^2(\O)}.
\]
Using $v_\disc=\varphi_{g,\disc}$ in \eqref{comp.0} and recalling \eqref{est.lip.A} therefore leads to
\begin{align}
\int_\O g&\Pi_\disc(\widetilde{u}_\disc-u_\disc)\d\x\nonumber\\
={}&\int_\O (A-A_\mesh)\nabla_\disc\widetilde{u}_\disc\cdot\nabla_\disc \varphi_{g,\disc}\d\x\nonumber\\
={}&\int_\O (A-A_\mesh)\nabla_\disc\widetilde{u}_\disc\cdot(\nabla_\disc \varphi_{g,\disc}-\nabla \varphi_{g})\d\x
+\int_\O (A-A_\mesh)\nabla_\disc\widetilde{u}_\disc\cdot\nabla \varphi_{g}\d\x\nonumber\\
\lesssim{}& h_\mesh^2 \norm{\nabla_\disc \widetilde{u}_\disc}{L^2(\O)^d} \norm{g}{L^2(\O)}
+\int_\O (A-A_\mesh)(\nabla_\disc\widetilde{u}_\disc-\nabla\bu)\cdot\nabla \varphi_{g}\d\x\nonumber\\
&+\int_\O (A-A_\mesh)\nabla\bu\cdot\nabla \varphi_{g}\d\x\nonumber\\
\lesssim{}& h_\mesh^2 \norm{f}{L^2(\O)} \norm{g}{L^2(\O)}
+\int_\O (A-A_\mesh)\nabla\bu\cdot\nabla \varphi_{g}\d\x.
\label{supercv.modified}
\end{align}
In the last line, we have used the standard stability
property $\norm{\nabla \varphi_{g}}{L^2(\O)^d} \lesssim\norm{g}{L^2(\O)}$, and
Proposition \ref{prop.comp.modified} along with \eqref{est.WS} and \eqref{assump.H2}.
We now estimate the last term in \eqref{supercv.modified}. By definition of $A_\mesh$,
\begin{align*}
\int_\O (A-A_\mesh)\nabla\bu\cdot\nabla \varphi_{g}\d\x
={}&\sum_{i,j=1}^d \int_\O (A-A_\mesh)_{i,j} \partial_j \bu\partial_i \varphi_{g}\d\x\\
={}&\sum_{i,j=1}^d \int_\O (A-A_\mesh)_{i,j} \left[\partial_j \bu\partial_i \varphi_{g}
-\pi_\mesh(\partial_j\bu\partial_i\varphi_{g})\right]\d\x,
\end{align*}
where $\pi_\mesh$ denotes projection on piecewise constant functions on $\mesh$.
By classical estimates (see e.g. \cite[Lemma B.7]{koala}),
\[
\norm{\partial_j \bu\partial_i \varphi_{g}-\pi_\mesh(\partial_j\bu\partial_i\varphi_{g})}{L^1(\O)}
\lesssim  \norm{\partial_j \bu\partial_i \varphi_{g}}{W^{1,1}(\O)}h_\mesh
\lesssim  \norm{\bu}{H^2(\O)}\norm{\varphi_{g}}{H^2(\O)}h_\mesh.
\]
Hence, using \eqref{est.lip.A} and the $H^2$ regularity property \eqref{assump.H2},
\[
\int_\O (A-A_\mesh)\nabla\bu\cdot\nabla\varphi_{g}\d\x
\lesssim h_\mesh^2\norm{f}{L^2(\O)}\norm{g}{L^2(\O)}.
\]
Plugging this estimate into \eqref{supercv.modified} and taking the supremum of the resulting
inequality over $g\in L^2(\O)$ of norm $1$ concludes the proof of \eqref{est.supercv.modified}.
\end{proof}

\begin{remark} Similar results could be obtained for higher-order methods, by considering
as $A_\mesh$ the $L^2$ projection on piecewise polynomial functions on $\mesh$.
\end{remark}

\subsection{Technical results}\label{sec:appen.tech}

The following definition appears in \cite{DEH15,koala}, in a slightly more general
context (here, it is restricted to the Hilbertian case). 

\begin{definition}[$\mathbb{P}_1$-exact gradient reconstruction]\label{def:linex}~
Let $U$ be a boun\-ded subset of $\R^d$ with non-zero measure,
and let $S = (\x_i)_{i\in I}\subset \R^d$ be a finite family of points.
A $\mathbb{P}_1$-exact gradient
reconstruction on $U$ upon $S$ is a family $\gr=(\gr^i)_{i\in I}$ of functions in
$L^2(U)^d$ such that, for any affine mapping $\ell:\R^d\rightarrow \R$ and a.e. $\x\in U$,
\[
\sum_{i\in I}\ell(\x_i)\gr^i(\x)=\nabla \ell.
\]
The norm of $\gr$ is defined by 
\begin{equation}
 \Vert \gr\Vert = 
\diam(U)|U|^{-\frac{1}{2}}\left\Vert \sum_{i\in I}|\gr^i|\right\Vert_{L^2(U)}.
\label{def:norlingrad}\end{equation}
For any family $\xi=(\xi_i)_{i\in I}$ of real numbers, define $\gr\xi=\sum_{i\in I}\xi_i\gr^i
\in L^2(U)^d$, and notice that
\[
\norm{\gr\xi}{L^2(U)^d}\le \diam(U)^{-1}|U|^{\frac{1}{2}}\norm{\gr}{}\max_{i\in I}|\xi_i|.
\]
\end{definition}

The following lemma is a specific case of \cite[Lemma A.3]{koala}.
The polynomial $L_\phi$ in this lemma is similar to
an averaged Taylor polynomial as in \cite{BS08}.

\begin{lemma}[Approximation of $H^{2}$ functions by affine functions]\label{lem:app.H2.lin}
Let $d\le 3$ and assume that $V\subset \R^d$ is bounded
and star-shaped with respect to all points in a ball $B$. 
Choose $\theta \ge \diam(V)/\diam(B)$ and $\phi \in H^{2}(V)\cap C(\overline{V})$. 

Then, there exists $\ctel{cst:estsdwdp}>0$, depending only on $d$ and $\theta$, 
and an affine function $L_\phi~:~V\to \R$ such that
\be\label{lem:app.H2.lin.est1}
 \sup_{\x\in \overline{V}} |\phi(\x) - L_\phi(\x)| \le \cter{cst:estsdwdp} \diam(V)^2
|V|^{-\frac 1 2} \norm{\phi}{H^2(V)}
\ee
and
\be\label{lem:app.H2.lin.est2}
 \norm{\nabla L_\phi -\nabla\phi}{L^2(V)^d}\le  \cter{cst:estsdwdp} \diam(V)  \norm{\phi}{H^2(V)}.
\ee
\end{lemma}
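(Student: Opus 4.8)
The plan is to take $L_\phi$ to be the degree-one averaged Taylor polynomial of $\phi$ over the ball $B$, exactly as in \cite{BS08}. Fix a nonnegative cut-off $\omega$ supported in $B$ with $\int_B\omega=1$ and $\norm{\omega}{L^\infty}\le C\diam(B)^{-d}$ ($C$ depending only on $d$), and set
\[
L_\phi(\x)=\int_B\big(\phi(\y)+\nabla\phi(\y)\cdot(\x-\y)\big)\,\omega(\y)\d\y.
\]
This is affine in $\x$, with constant gradient $\nabla L_\phi=\int_B\nabla\phi(\y)\,\omega(\y)\d\y$, and it reproduces affine functions. The reason for averaging over $B$ is that the remainder $\phi-L_\phi$ then admits the standard integral representation in terms of the second derivatives $D^2\phi$ alone, the integration kernel being controlled by the star-shapedness of $V$ with respect to $B$.

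From that representation one obtains the Bramble--Hilbert bounds: for $k\in\{0,1,2\}$,
\[
|\phi-L_\phi|_{H^k(V)}\le C\,\diam(V)^{2-k}\,|\phi|_{H^2(V)},
\]
where $C$ depends only on $d$ and on the chunkiness $\diam(V)/\diam(B)\le\theta$. Estimate \eqref{lem:app.H2.lin.est2} is then the case $k=1$: since $L_\phi$ is affine its Hessian vanishes, so $\norm{\nabla L_\phi-\nabla\phi}{L^2(V)^d}=|\phi-L_\phi|_{H^1(V)}\le C\diam(V)|\phi|_{H^2(V)}\le C\diam(V)\norm{\phi}{H^2(V)}$.

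For the uniform estimate \eqref{lem:app.H2.lin.est1} I would combine the three bounds above with the Sobolev embedding $H^2(V)\hookrightarrow C(\overline V)$, which is available precisely because $d\le3$. To control the embedding constant and its scaling I would dilate to $\hat V=\diam(V)^{-1}V$: this has unit diameter and is star-shaped with respect to a ball whose radius is bounded below in terms of $\theta$, so on this normalized class the embedding constant depends only on $d$ and $\theta$. Tracking the scaling $|\hat r|_{H^k(\hat V)}=\diam(V)^{k-d/2}|r|_{H^k(V)}$ of the seminorms under the dilation and inserting the displayed bounds gives
\[
\norm{\phi-L_\phi}{L^\infty(V)}\le C\,\diam(V)^{2-d/2}\,|\phi|_{H^2(V)}.
\]
Finally $V$ sits inside a ball of radius $\diam(V)$, so $|V|\le C\diam(V)^d$ and hence $\diam(V)^{-d/2}\le C|V|^{-1/2}$; this converts the last display into \eqref{lem:app.H2.lin.est1} (using $|\phi|_{H^2(V)}\le\norm{\phi}{H^2(V)}$).

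The only delicate point is the uniformity of the Sobolev constant: one must guarantee that the embedding $H^2\hookrightarrow C^0$ holds with a constant depending on nothing beyond $d$ and $\theta$ across all admissible $V$. This is exactly what the hypothesis that $V$ is star-shaped with respect to a ball of controlled chunkiness provides, since such domains satisfy a uniform cone condition. Everything else is the routine Bramble--Hilbert bookkeeping of \cite{BS08}, which is why the statement is recorded as the special case $m=2$, $p=2$ of \cite[Lemma A.3]{koala}.
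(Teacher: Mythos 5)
Your proposal is correct and takes essentially the same route as the paper: the paper does not prove this lemma in-house but cites \cite[Lemma A.3]{koala}, remarking explicitly that $L_\phi$ is an averaged Taylor polynomial as in \cite{BS08}, which is precisely the construction you carry out (Bramble--Hilbert seminorm bounds for the degree-one averaged Taylor polynomial, plus a dilation-and-Sobolev-embedding argument, valid for $d\le 3$, to get the $L^\infty$ bound with the $\diam(V)^2|V|^{-1/2}$ scaling). Your bookkeeping of the seminorm scalings, the uniformity of the embedding constant via the chunkiness parameter $\theta$, and the conversion $\diam(V)^{-d/2}\le C|V|^{-1/2}$ are all sound.
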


The next lemma estimates the difference between the averages of a function 
on two neighbouring sets.

\begin{lemma}\label{lem.est.moy} Let $U$, $V$ and $O$ be open sets of $\R^d$
such that, for all $(\x,\y)\in U\times V$, $[\x,\y]\subset O$.
There exists $\ctel{cst:moy}$ only depending on $d$ such that, for all
$\phi\in W^{1,1}(O)$,
\[
\left|\frac{1}{|U|}\int_U \phi(\x)\dx-\frac{1}{|V|}\int_V \phi(\x)\dx\right|
\le \frac{\cter{cst:moy}\diam(O)^{d+1}}{|U|\,|V|}\int_O |\nabla \phi(\x)|\dx.
\]
\end{lemma}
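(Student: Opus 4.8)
The plan is to reduce to smooth $\phi$ and then exploit the fundamental theorem of calculus along the segments joining points of $U$ to points of $V$. First I would invoke the Meyers--Serrin density of $C^\infty(O)\cap W^{1,1}(O)$ in $W^{1,1}(O)$: it suffices to prove the estimate for $\phi\in C^1$, the general case following by passing to the limit, since both sides are continuous for the $W^{1,1}(O)$ norm (the left-hand side through the $L^1$ convergence of the averages on $U$ and $V$, the right-hand side through the $L^1(O)$ convergence of the gradients). I note that the hypothesis $[\x,\y]\subset O$ for all $(\x,\y)\in U\times V$ forces $U\subset O$ and $V\subset O$ (the points of $U$ and $V$ are endpoints of such segments), so that $|U|,|V|\le |O|\le C\diam(O)^d$ by the isodiametric inequality, a fact that will absorb one factor of $|U|$ or $|V|$ at the end. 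Throughout the proof, $\lesssim$ will denote an inequality up to a multiplicative factor depending only on $d$.

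For $\phi\in C^1$ I would start from the identity
\[
\frac{1}{|V|}\int_V\phi(\y)\d\y-\frac{1}{|U|}\int_U\phi(\x)\d\x
=\frac{1}{|U|\,|V|}\int_U\int_V\big(\phi(\y)-\phi(\x)\big)\d\y\,\d\x,
\]
and write $\phi(\y)-\phi(\x)=\int_0^1\nabla\phi\big((1-t)\x+t\y\big)\cdot(\y-\x)\d t$. Bounding $|\y-\x|\le\diam(O)$ and taking absolute values, the task reduces to showing
\[
J:=\int_U\int_V\int_0^1\big|\nabla\phi\big((1-t)\x+t\y\big)\big|\d t\,\d\y\,\d\x
\lesssim\diam(O)^d\int_O|\nabla\phi(\z)|\d\z,
\]
after which the factor $\diam(O)/(|U|\,|V|)$ produces the claimed inequality with $C=C(d)$.

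The main obstacle is the change of variables $\z=(1-t)\x+t\y\in[\x,\y]\subset O$, whose Jacobian ($t^{-d}$ if one solves for $\y$, $(1-t)^{-d}$ if one solves for $\x$) is non-integrable near the corresponding endpoint. The remedy is to split $\int_0^1$ at $t=\tfrac12$. On $[\tfrac12,1]$ I fix $\x$ and $t$ and substitute $\z=(1-t)\x+t\y$, so that $\d\y=t^{-d}\d\z\le 2^d\d\z$ while $\z$ ranges in $(1-t)\x+tV\subset O$; integrating successively in $\z$, then $t$, then $\x$, and using $|U|\lesssim\diam(O)^d$ gives the required bound for this part. On $[0,\tfrac12]$ I use the symmetric substitution, fixing $\y$ and $t$ and solving for $\x$, so that $\d\x=(1-t)^{-d}\d\z\le 2^d\d\z$; the same computation with the roles of $U$ and $V$ exchanged (and $|V|\lesssim\diam(O)^d$) controls this part. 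Adding the two contributions, with Tonelli's theorem justifying all interchanges of integration since the integrands are nonnegative, yields the estimate for $J$ and completes the proof.
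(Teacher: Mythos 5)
Your proof is correct, and it shares the paper's skeleton: reduction to smooth functions by density of $C^\infty(O)\cap W^{1,1}(O)$ in $W^{1,1}(O)$, the fundamental theorem of calculus along the segments $[\x,\y]\subset O$, the bound $|\x-\y|\le \diam(O)$, and a change of variables pushing the resulting integrals onto $O$. Where you genuinely diverge is in the treatment of the non-integrable Jacobian, which is the only real difficulty. The paper substitutes $\x\mapsto\z=t\x+(1-t)\y$ for fixed $\y$ over the \emph{whole} range $t\in(0,1)$, obtaining $\int_{O}|\nabla \phi(\z)|\int_V \int_{I(\z,\y)} t^{-d}\d t\d\y\d\z$ with $I(\z,\y)=\{t\in[0,1]\,:\,\exists \x\in U,\ t\x+(1-t)\y=\z\}$, and then kills the $t^{-d}$ singularity geometrically: $t\in I(\z,\y)$ forces $t\ge|\z-\y|/\diam(O)$, so a polar-coordinate computation around $\z$ (imported from Step 1 of the proof of \cite[Lemma 6.6]{dro-08-stu}) bounds the inner double integral by $\frac{C}{d-1}\diam(O)^d$. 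You instead split at $t=\frac12$ and, on each half, integrate out whichever endpoint variable has a harmless Jacobian ($\d\y=t^{-d}\d\z\le 2^d\d\z$ when $t\ge\frac12$, symmetrically in $\x$ when $t\le\frac12$); the price is a factor $|U|$ or $|V|$, which you correctly absorb by observing that the segment hypothesis forces $U,V\subset O$, hence $|U|,|V|\le|O|\le C_d\diam(O)^d$. Both routes give a constant depending only on $d$, as required. Yours has the merit of being entirely self-contained (no appeal to an external lemma) and of remaining uniform down to $d=1$, where the paper's displayed constant $\frac{C}{d-1}$ would need a separate (easy) remark; the paper's has the merit of recycling a standard estimate from the finite-volume literature rather than reproving it, and does not need the bound $|U|,|V|\lesssim\diam(O)^d$ explicitly.
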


\begin{proof} 
Since $C^\infty(O)\cap W^{1,1}(O)$ is dense in $W^{1,1}(O)$, we can assume that
$\phi\in C^\infty(O)\cap W^{1,1}(O)$. We then write, by Taylor's expansion,
$\phi(\x)-\phi(\y)=\int_0^1 \nabla \phi(t\x+(1-t)\y)\cdot(\x-\y)\d t$ for $(\x,\y)\in U\times V$,
and thus
\begin{eqnarray}
\left|\frac{1}{|U|}\int_U \right.\lefteqn{\phi(\x)\dx-\left.
\frac{1}{|V|}\int_V \phi(\x)\dx\right|}&&\nonumber\\
&&\le \frac{\diam(O)}{|U|\,|V|}\int_U \int_V \int_0^1 |\nabla \phi(t\x+(1-t)\y)|\d t\d\y\d\x.
\label{convex0}\end{eqnarray}
Let us fix $\y\in V$ and apply the change of variable $\x\in U\to \z=t\x+(1-t)\y\in O$.
This gives
\begin{equation}
\int_U \int_V \int_0^1 |\nabla \phi(t\x+(1-t)\y)|\d t\d\x\d\y\le
\int_{O}|\nabla \phi(\mathbi{z})|\int_V \int_{I(\z,\y)} t^{-d}\d t\d\y\d\z
\label{convex1}\end{equation}
where $I(\z,\y)=\{t\in [0,1]\;|\;\exists \x\in U\,,\;t\x+(1-t)\y=\z\}$.
As in Step 1 of the proof of \cite[Lemma 6.6]{dro-08-stu}, we see that
\[
\int_V \int_{I(\z,\y)} t^{-d}\d t\d\y\le
\frac{\cter{ggg}}{d-1}\diam(O)^d
\]
where $\ctel{ggg}$ is the surface of the unit sphere in $\R^d$.
The proof is complete by substituting this inequality into \refe{convex1} and 
plugging the result in \refe{convex0}. \end{proof}

The existence of the functions $w_\cell$ mentioned in the following lemma
has been first established in \cite[Lemma A.1]{dro-10-uni}. We provide here some
additional estimates on these functions.

\begin{lemma}\label{lem.w.xK}
Let $d\le 3$ and $\polyd$ be a polytopal mesh of $\O$ in the sense of Definition \ref{def:polymesh}.
There exist affine functions $(w_\cell)_{\cell\in\mesh}$, and $\ctel{cst:bound.w}$
depending only on $d$ and an upper bound of $\theta_\polyd$, such that, for all
$\cell\in\mesh$,
\[
\int_\cell w_\cell(\x)\d\x=|\cell|\,,\quad
\int_\cell \x w_\cell(\x)\d\x = |\cell|\x_\cell\,,\quad
\norm{w_\cell}{L^\infty(\cell)}\le \cter{cst:bound.w}\,,
\]
and, for all $\phi\in H^2(\cell)$,
\be\label{est.phi.w}
\left|\phi(\x_\cell)-\frac{1}{|\cell|}\int_\cell \phi(\x)w_\cell(\x)\d\x\right|
\le \cter{cst:bound.w}h_\cell^2 |K|^{-\frac{1}{2}}\norm{\phi}{H^2(\cell)}.
\ee
\end{lemma}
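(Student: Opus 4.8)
The plan is to construct each affine function $w_\cell$ explicitly, since on a single cell the four requested moment conditions amount to a finite linear system that is solvable precisely because $\x_\cell$ lies strictly inside $\cell$. Write $w_\cell(\x)=1+\bfb_\cell\cdot(\x-\overline{\x}_\cell)$ for an unknown vector $\bfb_\cell\in\R^d$, where $\overline{\x}_\cell$ is the center of mass of $\cell$. The normalisation $\int_\cell w_\cell\d\x=|\cell|$ holds automatically for any $\bfb_\cell$ because $\int_\cell(\x-\overline{\x}_\cell)\d\x=0$. The first-moment condition $\int_\cell \x\, w_\cell(\x)\d\x=|\cell|\x_\cell$ then reduces, after subtracting $\overline{\x}_\cell$ times the normalisation, to $\int_\cell(\x-\overline{\x}_\cell)w_\cell(\x)\d\x=|\cell|(\x_\cell-\overline{\x}_\cell)=-|\cell|\e_\cell$. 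Substituting the form of $w_\cell$, this is $M_\cell\,\bfb_\cell=-|\cell|\e_\cell$, where $M_\cell=\int_\cell(\x-\overline{\x}_\cell)\otimes(\x-\overline{\x}_\cell)\d\x$ is the (symmetric, positive-definite) second-moment matrix of $\cell$ about its centroid. First I would set $\bfb_\cell=-|\cell|M_\cell^{-1}\e_\cell$.

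The main work is the uniform $L^\infty$ bound on $w_\cell$, which requires controlling $M_\cell^{-1}$ in terms of the regularity factor $\theta_\polyd$. The key point is that $M_\cell$ scales like $|\cell|h_\cell^2$ in each direction, with constants governed by the shape-regularity encoded in $\theta_\polyd$: since $\cell$ is strictly $\x_\cell$-star-shaped and contains the cones $D_{\cell,\edge}$ with $d_{\cell,\edge}\gtrsim h_\cell/\theta_\polyd$, the cell contains a ball of radius comparable to $\theta_\polyd^{-1}h_\cell$, and this forces a lower bound $M_\cell\gtrsim |\cell|h_\cell^2\,\mathrm{Id}$ (as a quadratic form) with a constant depending only on $d$ and $\theta_\polyd$. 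Consequently $|M_\cell^{-1}|\lesssim |\cell|^{-1}h_\cell^{-2}$, so that $|\bfb_\cell|\lesssim h_\cell^{-2}|\e_\cell|\le h_\cell^{-1}$ using $|\e_\cell|=|\overline{\x}_\cell-\x_\cell|\le h_\cell$. Then for $\x\in\cell$ we get $|w_\cell(\x)-1|=|\bfb_\cell\cdot(\x-\overline{\x}_\cell)|\le |\bfb_\cell|\,h_\cell\lesssim 1$, which yields $\norm{w_\cell}{L^\infty(\cell)}\le\cter{cst:bound.w}$. I expect establishing this lower bound on $M_\cell$ from the star-shapedness and $\theta_\polyd$ to be the only genuinely delicate step; everything else is linear algebra.

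Finally, the quadrature estimate \eqref{est.phi.w} follows from the exactness of $w_\cell$ on affine functions together with the approximation Lemma \ref{lem:app.H2.lin}. I would apply that lemma on $V=\cell$ (which is star-shaped with respect to a ball of radius $\gtrsim\theta_\polyd^{-1}h_\cell$ by the argument above, so the hypothesis holds with $\theta$ bounded in terms of $\theta_\polyd$), obtaining an affine $L_\phi$ with $\sup_{\overline\cell}|\phi-L_\phi|\lesssim h_\cell^2|\cell|^{-1/2}\norm{\phi}{H^2(\cell)}$. The first two moment conditions give exactly $\frac{1}{|\cell|}\int_\cell L_\phi(\x)w_\cell(\x)\d\x=L_\phi(\x_\cell)$, since $L_\phi(\x)=L_\phi(\overline{\x}_\cell)+\nabla L_\phi\cdot(\x-\overline{\x}_\cell)$ and the weighted integral reproduces $L_\phi(\overline{\x}_\cell)+\nabla L_\phi\cdot(\x_\cell-\overline{\x}_\cell)=L_\phi(\x_\cell)$. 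Writing
\[
\phi(\x_\cell)-\frac{1}{|\cell|}\int_\cell\phi\,w_\cell\d\x
=\big(\phi(\x_\cell)-L_\phi(\x_\cell)\big)
-\frac{1}{|\cell|}\int_\cell(\phi-L_\phi)w_\cell\d\x,
\]
I would bound the first term by the uniform estimate and the second by $\norm{w_\cell}{L^\infty(\cell)}\sup_{\overline\cell}|\phi-L_\phi|\lesssim h_\cell^2|\cell|^{-1/2}\norm{\phi}{H^2(\cell)}$, giving \eqref{est.phi.w} with $|K|=|\cell|$.
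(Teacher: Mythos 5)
Your proposal is correct and follows essentially the same route as the paper: the paper's proof also sets $w_\cell(\x)=1+\xi\cdot(\x-\overline{\x}_\cell)$ with $\xi$ solving the second-moment system $J_\cell\xi=|\cell|(\x_\cell-\overline{\x}_\cell)$, bounds $|\xi|\lesssim h_\cell^{-1}$ through the lower bound $J_\cell\xi\cdot\xi\gtrsim |\cell|h_\cell^2|\xi|^2$ obtained from a shape-regular inscribed cube (your inscribed-ball bound $M_\cell\gtrsim|\cell|h_\cell^2\,{\rm Id}$ is the same estimate, just stated uniformly over directions), and establishes \eqref{est.phi.w} exactly as you do, via Lemma \ref{lem:app.H2.lin} and the exactness of the weighted quadrature on affine functions. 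No gaps.
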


\begin{proof} 
Consider the function given by $w_\cell(\x)=1+\xi\cdot(\x-\overline{\x}_\cell)$,
where $\xi$ is the vector such that $J_\cell \xi = |\cell|(\x_\cell-\overline{\x}_\cell)$,
with $J_\cell$ the $d\times d$ matrix given by
\[
J_\cell = \int_\cell (\x-\overline{\x}_\cell)(\x-\overline{\x}_\cell)^T\d\x.
\]
Let us now establish the estimate on $w_\cell$. We refer to Figure \ref{fig:est-w}
for an illustration of the reasoning. Up to a change of coordinate
system, we can assume that $\overline{\x}_\cell$ lies on the hyperplane
$H_0=\{\x\,:\,x_d=0\}$, and that $\xi$ is orthogonal to $H_0$
and points towards the direction $x_d>0$. By definition of $\theta_\polyd$,
$\cell$ contains a cube $Q_\cell$ centered at $\x_\cell$ and of length $\ctel{cst:height}h_\cell$,
where $\cter{cst:height}$ only depends on $d$ and an upper bound of
$\theta_\polyd$.
Let $R_\pm$ be the upper and lower thirds of $Q_\cell$, that is
$R_+=\{\x\in Q_\cell\,:\,(\x-\x_\cell)_d> \frac{\cter{cst:height}h_\cell}{6}\}$
and $R_-=\{\x\in Q_\cell\,:\,(\x-\x_\cell)_d<- \frac{\cter{cst:height}h_\cell}{6}\}$.

\begin{figure}[htb]
\begin{center}
\input{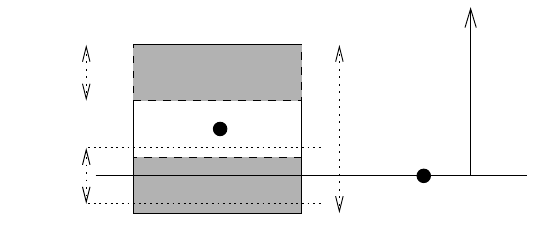_t}
\caption{Illustration of the proof of Lemma \ref{lem.w.xK}.}
\label{fig:est-w}
\end{center}
\end{figure}

Since $Q_\cell\backslash (R_+\cup R_-)$ has width $\frac{\cter{cst:height}h_\cell}{3}$,
one of the regions $R_+$ or $R_-$ (let us assume $R_+$),
must lie entirely outside the band of width $\frac{\cter{cst:height}h_\cell}{3}$
around $x_d=0$. If $\x\in R_+$, we then have
$|(\x-\overline{\x}_\cell)\cdot\xi|=\dist(\x,H_0)|\xi|\ge \frac{\cter{cst:height}h_\cell}{3}|\xi|$.
Hence,
\begin{multline*}
|\cell|h_\cell|\xi|\ge |\cell|(\x_\cell-\overline{\x}_\cell)\cdot \xi = J_\cell \xi\cdot\xi
=\int_\cell ((\x-\overline{\x}_\cell)\cdot\xi)^2\d\x\\
\ge \int_{R_+} ((\x-\overline{\x}_\cell)\cdot\xi)^2\d\x
\ge |R_+| \frac{\cter{cst:height}^2h_\cell^2}{9}|\xi|^2.
\end{multline*}
We have $|R_+|=\frac{\cter{cst:height}^dh_\cell^d}{3}\ge \cter{cst:height}^d\cter{cst:height2}|\cell|$,
where $\ctel{cst:height2}$ only depends on $d$ and an upper bound of $\theta_\polyd$.
Hence, $|\xi|\le 9\cter{cst:height}^{-2-d}\cter{cst:height2}^{-1}h_\cell^{-1}$ and,
for all $\x\in \cell$,
\be\label{est.w.1}
|w_\cell(\x)|\le 1 + h_\cell |\xi|\le 1+9\cter{cst:height}^{-2-d}\cter{cst:height2}^{-1}.
\ee
This concludes the proof of the estimate on $\norm{w_\cell}{L^\infty(\cell)}$.

To prove \eqref{est.phi.w}, we use Lemma \ref{lem:app.H2.lin} with $V=\cell$
(since $\cell$ is star-shaped and $d\le 3$, we have $\phi\in C(\overline{K})$).
A triangle inequality gives
\begin{align}
\Big|\phi(\x_\cell)-&\frac{1}{|\cell|}\int_\cell\phi(\x)w_\cell(\x)\d\x\Big|\nonumber\\
\le{}&|\phi(\x_\cell)-L_\phi(\x_\cell)|
+\Big|L_\phi(\x_\cell)-\frac{1}{|\cell|}\int_\cell L_\phi(\x)w_\cell(\x)\d\x\Big|\nonumber\\
&+\frac{1}{|\cell|}\int_\cell\left|L_\phi(\x)-\phi(\x)\right|w_\cell(\x)\d\x.
\label{est.phi.w.1}
\end{align}
We have $L_\phi(\x)=L_\phi(\x_\cell)+\nabla L_\phi \cdot(\x-\x_\cell)$ and thus
\[
\frac{1}{|\cell|}\int_\cell L_\phi(\x)w_\cell(\x)\d\x
=L_\phi(\x_\cell)+\frac{1}{|\cell|}\nabla L_\phi\cdot\int_\cell (\x-\x_\cell)w_\cell(\x)\d\x
=L_\phi(\x_\cell).
\]
Hence, \eqref{est.phi.w.1} and the properties of $L_\phi$ give
\[
\Big|\phi(\x_\cell)-\frac{1}{|\cell|}\int_\cell\phi(\x)w_\cell(\x)\d\x\Big|
\le\cter{cst:est.w.2}h_\cell^2|\cell|^{-\frac{1}{2}}\norm{\phi}{H^2(\cell)}
(1+\norm{w_\cell}{L^\infty(\cell)}),
\]
where $\ctel{cst:est.w.2}$ only depends on $d$ and an upper bound of $\theta_\polyd$.
The estimate \eqref{est.phi.w} is complete by using \eqref{est.w.1}. \end{proof}

The following result was used in the proof of the super-convergence of HMM schemes
(Theorem \ref{th-supercv-HMM-group}),
to estimate a residual on the part of the domain not covered by the patches. It is
a $W^{1,1}$ hyperplanar version of Ilin's inequality in $H^1$ \cite{OR79}.
\begin{lemma}\label{lemma.strip}
Let $\O$ be an open set with a Lipschitz boundary,
$H$ be an hyperplane, $\rho\in (0,\diam(\O))$ and $\strip_H(\rho)=\{x\in\O\,:\,\dist(\x,H)\le \rho\}$.
Then there exists $C$ depending only on $\O$ such that, for all $\phi\in W^{1,1}(\O)$,
\be\label{ineq.strip}
\norm{\phi}{L^1(\strip_H(\rho))}\le C \rho \norm{\phi}{W^{1,1}(\O)}.
\ee
\end{lemma}
\begin{proof}
Using an affine transformation, an extension operator $W^{1,1}(\O)\to W^{1,1}(\R^d)$
(whose norm only impacts $C$ in \eqref{ineq.strip}) and the density of smooth functions
in $W^{1,1}(\R^d)$, we can
assume that $H=\R^{d-1}\times \{0\}$, that $\O$ is replaced with $\R^{d-1}\times (-\diam(\O),\diam(\O))$,
and that $\phi$ is smooth.
Then, for $\x\in \R^{d-1}$, $y\in [-\rho,\rho]$ and $z\in (-\diam(\O),\diam(\O))$,
\[
|\phi(\x,y)|=\left|\phi(\x,z)+\int_z^y \partial_d \phi(\x,s)\d s \right|
\le |\phi(\x,z)|+\int_{-\diam(\O)}^{\diam(\O)} |\partial_d \phi(\x,s)|\d s.
\]
Integrate over $y\in [-\rho,\rho]$, $z\in (-\diam(\O),\diam(\O))$ and $\x\in \R^{d-1}$:
\begin{multline*}
2\diam(\O)\int_{\R^{d-1}}\int_{-\rho}^\rho |\phi(\x,y)|\d y\d \x\\
\le
 2\rho \int_{\R^{d-1}}\int_{-\diam(\O)}^{\diam(\O)}|\phi(\x,z)|\d z\d \x+4\diam(\O)\rho \int_{\R^{d-1}}\int_{-\diam(\O)}^{\diam(\O)} |\partial_d \phi(\x,s)|\d s\d z.
\end{multline*}
Estimate \eqref{ineq.strip} follows by dividing throughout by $2\diam(\O)$.
\end{proof}

The last technical lemma of this appendix shows that the average, over a triangulation
of a set $Q$, of the differences between the circumcenters
and the centers of mass of the triangles can be computed using only the vertices
of the triangulation on $\partial Q$. This lemma is useful to find patches,
in the proof of Theorem \ref{th:super_cv.tpfa}, over which this average vanishes.

\begin{lemma}\label{lem:compensation.triangles}
Let $Q$ be a polygonal subset of $\R^2$, with center of mass $\overline{\x}_Q$,
and let $\polyd^Q=(\mesh^Q,\edges^Q,\centers^Q)$ be a 
conforming triangulation of $Q$ into triangles. For each $T\in\mesh^Q$ we denote by $\c_T$ the circumcenter
of $T$ and by $\overline{\x}_T$ the center of mass of $T$. If $\sigma\in\edges^Q$ is
an edge of the triangulation, we denote by $\vertex^\sigma_1$ and $\vertex^\sigma_2$
the two endpoints of $\edge$. Then,
\be\label{form.compens.triangles}
\sum_{T\in\mesh^Q} |T|(\c_T-\overline{\x}_T)=\sum_{\edge\in\edges^Q_{\rm ext}}|\edge|\frac{|\vertex^\edge_1-\overline{\x}_Q|^2+|\vertex^\edge_2-\overline{\x}_Q|^2}{4} \bfn_{Q,\edge},
\ee
where $\bfn_{Q,\edge}$ is the outer normal to $Q$ on $\edge$. 
\end{lemma}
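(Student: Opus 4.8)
The plan is to reduce the global statement to a single, origin-independent identity at the level of one triangle and then sum it over the triangulation, exploiting the cancellation of interior edges. Concretely, for a triangle $T\in\mesh^Q$ with vertices $\vertex_1,\vertex_2,\vertex_3$ and set of edges $\edges_T$, I claim
\be\label{plan.star}
|T|\,\c_T=\sum_{\edge\in\edges_T}|\edge|\,\frac{|\vertex_1^\edge|^2+|\vertex_2^\edge|^2}{4}\,\bfn_{T,\edge}.
\ee
Granting \eqref{plan.star}, I would sum over $T\in\mesh^Q$. Any interior edge is shared by exactly two triangles, for which $|\edge|$ and $|\vertex_1^\edge|^2+|\vertex_2^\edge|^2$ coincide while the outer normals are opposite; hence all interior contributions cancel and only $\edge\in\edges^Q_{\rm ext}$ survive, with $\bfn_{T,\edge}=\bfn_{Q,\edge}$. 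On the other hand, since the triangulation partitions $Q$, $\sum_{T}|T|\,\overline{\x}_T=\sum_T\int_T\x\,\dx=\int_Q\x\,\dx=|Q|\,\overline{\x}_Q$. Finally, the whole statement is invariant under a translation of the origin: each of $\c_T$, $\overline{\x}_T$, $\overline{\x}_Q$ and $\vertex_i^\edge$ moves by the same vector, so both $\c_T-\overline{\x}_T$ and $\vertex_i^\edge-\overline{\x}_Q$ are unchanged, while $\sum_{\edge\in\edges^Q_{\rm ext}}|\edge|\bfn_{Q,\edge}=0$ since $\partial Q$ is closed. I may therefore assume $\overline{\x}_Q=\mathbf 0$, whence the centroid sum vanishes and \eqref{form.compens.triangles} follows at once from the surviving boundary sum.

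The crux is \eqref{plan.star}, which is where the real work lies; I would prove it from the defining property of the circumcenter, namely that $\c_T$ is the unique point with $2\,\c_T\cdot(\vertex_j-\vertex_i)=|\vertex_j|^2-|\vertex_i|^2$ for all vertex pairs. Orient $T$ counterclockwise as $\vertex_1\to\vertex_2\to\vertex_3$, so that for the edge running from $\mathbf p$ to $\mathbf q$ one has $|\edge|\,\bfn_{T,\edge}=\mathcal R(\mathbf q-\mathbf p)$, where $\mathcal R$ is the rotation by $-\pi/2$; this satisfies $\mathcal R\mathbf u\cdot\mathbf w=\det(\mathbf w,\mathbf u)$. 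Writing $\mathbf P$ for the right-hand side of \eqref{plan.star} and using $\det\big(\mathbf w,(\vertex_2-\vertex_1)+(\vertex_3-\vertex_2)+(\vertex_1-\vertex_3)\big)=0$, a short regrouping gives, for every $\mathbf w$,
\be\label{plan.Pw}
4\,\mathbf P\cdot\mathbf w=-|\vertex_1|^2\det(\mathbf w,\vertex_3-\vertex_2)-|\vertex_2|^2\det(\mathbf w,\vertex_1-\vertex_3)-|\vertex_3|^2\det(\mathbf w,\vertex_2-\vertex_1).
\ee
Taking $\mathbf w=\vertex_2-\vertex_1$ kills the last determinant, and the identities $\det(\vertex_2-\vertex_1,\vertex_3-\vertex_2)=\det(\vertex_2-\vertex_1,\vertex_3-\vertex_1)=2|T|$ together with $\det(\vertex_2-\vertex_1,\vertex_1-\vertex_3)=-2|T|$ yield $\mathbf P\cdot(\vertex_2-\vertex_1)=\tfrac{|T|}{2}(|\vertex_2|^2-|\vertex_1|^2)$. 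Cyclic permutation gives the analogue for $\vertex_3-\vertex_1$. As $\vertex_2-\vertex_1$ and $\vertex_3-\vertex_1$ span $\R^2$, these two relations show that $\mathbf P/|T|$ satisfies the circumcenter equations, hence $\mathbf P=|T|\,\c_T$, establishing \eqref{plan.star}.

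I expect the verification of \eqref{plan.star} to be the main obstacle, precisely because the natural guess fails: the boundary integral $\tfrac12\oint_{\partial T}|\x|^2\,\bfn\,\dx$ does \emph{not} produce the circumcenter but the centroid $|T|\,\overline{\x}_T$, since it carries the exact edge average $\tfrac{|\vertex_1|^2+\vertex_1\cdot\vertex_2+|\vertex_2|^2}{3}$ rather than the endpoint average $\tfrac{|\vertex_1|^2+|\vertex_2|^2}{2}$ used in \eqref{plan.star}. The circumcenter emerges only from these endpoint weights, and the determinant computation above is what pins it down; as a consistency check, the difference of the two weightings gives $\tfrac1{12}\sum_{\edge\in\edges_T}|\edge|^3\bfn_{T,\edge}=|T|(\c_T-\overline{\x}_T)$, but this detour is not needed if one argues directly as above. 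Everything else — interior-edge cancellation, the elementary centroid sum, and the reduction to $\overline{\x}_Q=\mathbf 0$ — is routine.
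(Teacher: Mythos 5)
Your proof is correct and follows essentially the same route as the paper: the identical per-triangle identity $|T|\,\c_T=\sum_{\edge\in\edges_T}|\edge|\,\tfrac{|\vertex^\edge_1|^2+|\vertex^\edge_2|^2}{4}\,\bfn_{T,\edge}$, then summation over the triangulation with cancellation of the interior-edge contributions, the centroid identity $\sum_T|T|\overline{\x}_T=|Q|\overline{\x}_Q$, and reduction to $\overline{\x}_Q=\mathbf{0}$ by translation invariance. The only (immaterial) difference is the direction of the key computation: the paper expands $\c_T$ in terms of the outward normals and inserts the perpendicular-bisector relations, whereas you test the boundary sum against two edge vectors and verify that it satisfies the circumcenter equations $2\,\c_T\cdot(\vertex_j-\vertex_i)=|\vertex_j|^2-|\vertex_i|^2$ --- the same linear algebra run in reverse.
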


\begin{proof}

Let us first establish the following formula, for any triangle $T$:
\be\label{form:cT}
|T|\c_T = \sum_{\edge\in\edges_T} |\edge|\frac{|\vertex^\edge_1|^2+|\vertex^\edge_2|^2}{4}\bfn_{T,\edge}.
\ee

\begin{figure}[htb]
\begin{center}
\input{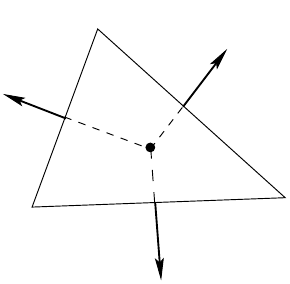_t}
\caption{Notations within a triangle $T$.}
\label{fig:cT}
\end{center}
\end{figure}

We use the notations in Figure \ref{fig:cT}. Any vector $\xi\in\R^2$ can be written 
\[
\xi = -\frac{1}{2|T|}\left(|\edge_3|[\xi\cdot (\a_3-\a_1)]\bfn_{T,\edge_3}
+|\edge_2|[\xi\cdot(\a_2-\a_1)]\bfn_{T,\edge_2}\right)
\]
(take the dot product of each side this inequality
with the two linearly independent vectors $\a_3-\a_1$ and $\a_2-\a_1$).
We apply this relation to $\xi=\c_T$ and use the characterisations
\[
\left(\c_T-\frac{\a_1+\a_3}{2}\right)\cdot(\a_3-\a_1)=0\mbox{ and }
\left(\c_T-\frac{\a_1+\a_2}{2}\right)\cdot(\a_2-\a_1)=0
\]
of $\c_T$ to obtain
\begin{align*}
-2|T|\c_T ={}& |\edge_3|[\c_T\cdot (\a_3-\a_1)]\bfn_{T,\edge_3}
+|\edge_2|[\c_T\cdot(\a_2-\a_1)]\bfn_{T,\edge_2}\\
={}&|\edge_3|\left[\left(\frac{\a_1+\a_3}{2}\right)
\cdot (\a_3-\a_1)\right]\bfn_{T,\edge_3}\\
&+|\edge_2|\left[\left(\frac{\a_1+\a_2}{2}\right)\cdot(\a_2-\a_1)\right]
\bfn_{T,\edge_2}\\
={}&\frac{1}{2}\left(|\edge_3|\left[|\a_3|^2-|\a_1|^2\right]\bfn_{T,\edge_3}
+|\edge_2|\left[|\a_2|^2-|\a_1|^2\right]\bfn_{T,\edge_2}\right).
\end{align*}
Since $|\edge_1|\bfn_{T,\edge_1}+|\edge_2|\bfn_{T,\edge_2}+|\edge_3|\bfn_{T,\edge_3}=0$,
we infer
\begin{align*}
4|T|\c_T ={}&|\a_1|^2\big[|\edge_2|\bfn_{T,\edge_2}+|\edge_3|\bfn_{T,\edge_3}\big]
+|\a_2|^2\big[|\edge_1|\bfn_{T,\edge_1}+|\edge_3|\bfn_{T,\edge_3}\big]\\
&+|\a_3|^2\big[|\edge_1|\bfn_{T,\edge_1}+|\edge_2|\bfn_{T,\edge_2}\big].
\end{align*}
Gathering this sum by edges contributions $|\edge_i|\bfn_{T,\edge_i}$ concludes
the proof of \eqref{form:cT}.

\medskip

The proof of \eqref{form.compens.triangles} is now trivial. We have
$|Q|\overline{\x}_Q=\sum_{T\in\mesh^Q}|T|\overline{\x}_T$ and, without
loss of generality, we can assume that this quantity is equal to $0$
(we translate $Q$ so that its center of mass is $0$).
By summing \eqref{form:cT} over $T\in\mesh^Q$ and gathering the right-hand side
by edges, we find
\[
\sum_{T\in\mesh^Q}|T|\c_T = \sum_{\edge\in\edges^Q_{\rm int}} |\edge|
\frac{|\vertex^\edge_1|^2+|\vertex^\edge_2|^2}{4}
(\bfn_{T,\edge}+\bfn_{T',\edge})
+\sum_{\edge\in\edges^Q_{\rm ext}} |\edge|
\frac{|\vertex^\edge_1|^2+|\vertex^\edge_2|^2}{4}\bfn_{Q,\edge}.
\]
In the first sum, $T$ and $T'$ are the triangles on each side of $\edge$,
and thus $\bfn_{T,\edge}+\bfn_{T',\edge}=0$. The proof of \eqref{form.compens.triangles}
is complete. \end{proof}

\subsection{Implementation and fluxes of the HMM and modified HMM methods}\label{sec:impl.hmm}

We give here some elements for implementing the HMM method (the gradient scheme
\eqref{base.GS} based on the gradient discretisation in Definition \ref{def.HMM.GD}),
and the modified HMM method of Section \ref{sec:mod.hmm}, which also
leads us to discuss
their interpretation as finite volume methods for appropriate choices of fluxes.

\subsubsection{HMM method}

The following is drawn from \cite{sushi,dro-10-uni}, and solely recalled for
ease of reference. The fluxes $(F_{\cell,\edge}(u))_{\cell\in\mesh\,,\;\edge\in\edgescv}$
of the HMM method are defined, for $u\in X_{\disc,0}$, by
\begin{multline}\label{def.flux}
\forall \cell\in\mesh\,,\;\forall v=(v_\cell,(v_{\edge})_{\edge\in\edgescv})\,,\\
\sum_{\edge\in\edgescv}F_{\cell,\edge}(u)(v_\cell-v_\edge)=\int_K A(\x)\nabla_\disc u(\x)
\nabla_\disc v(\x)\d\x
\end{multline}
(note that, on $\cell$, $\nabla_\disc v$ only depends on $(v_\cell,(v_{\edge})_{\edge\in\edgescv})$). Then, $u\in X_{\disc,0}$ is a solution to the HMM scheme
if and only if the following balance and conservativity of fluxes are satisfied:
\begin{align}
\label{balance.fluxes}
&\forall \cell\in \mesh\,,\;\sum_{\edge\in\edgescv}F_{K,\edge}(u)=\int_K f(\x)\d\x\,,\\
\label{conservativity.fluxes}
&\forall \edge\in\edgesint\mbox{, if $\mesh_\edge=\{K,L\}$ then }F_{K,\edge}(u)+F_{L,\edge}(u)=0.
\end{align}
These equations are respectively obtained by taking, in \eqref{base.GS}, a test function $v$ that
is equal to $1$ on $\cell$ and zero at all other degrees of freedom, and a test function $v$
that is equal to $1$ on $\edge$ and zero at all other degrees of freedom. The HMM method is
best implemented through \eqref{balance.fluxes}--\eqref{conservativity.fluxes}, once
a practical local formula for the fluxes is obtained.

\begin{remark}[Formula for the fluxes]
Thanks to \eqref{average.grad}, assuming that $A$ is constant equal to $A_\cell$ on $\cell$,
we have
\be\label{re.flux}
\int_\cell A(\x)\nabla_\disc u(\x)\nabla_\disc v(\x)\d\x=
|\cell|A_\cell\grad_\cell u \cdot\grad_\cell v + R_\cell(v)^T \mathbb{B}_\cell R_\cell(u),
\ee
where $\mathbb{B}_\cell$ is a ${\rm Card}(\edgescv)\times {\rm Card}(\edgescv)$ symmetric positive definite matrix related to $\iso_\cell$. If $\iso_\cell=\alpha_\cell{\rm Id}$
(usual choice), then $\mathbb{B}_K=\alpha_\cell^2 {\rm diag}(\frac{|\edge|}{d_{\cell,\edge}}A_\cell \bfn_{\cell,\edge}\cdot\bfn_{\cell,\edge})$. To implement the HMM method in practice,
one chooses $\mathbb{B}_K$. $\iso_K$ is only a tool for the analysis of the method.

Owing to \eqref{def.flux} and \eqref{re.flux}, the fluxes can be written
$(F_{\cell,\edge}(u))_{\edge\in\edgescv}=\mathbb{W}_\cell(u_\cell-u_\edge)_{\edge\in\edgescv}$.
Here, $\mathbb{W}_\cell$ is the square matrix of size ${\rm Card}(\edgescv)$ defined by
\[
\mathbb{W}_\cell=|\cell|\mathbb{G}_\cell^T A_\cell\mathbb{G}_\cell + \mathbb{R}_\cell^T
\mathbb{B}_\cell\mathbb{R}_\cell,
\]
where $\mathbb{G}_\cell$ is the $d\times {\rm Card}(\edgescv)$ matrix with columns 
$\frac{|\edge|}{|\cell|}\bfn_{\cell,\edge}$, and $\mathbb{R}_\cell=\mathbb{I}_\cell
-\mathbb{X}_\cell\mathbb{G}_\cell$ with $\mathbb{I}_\cell$ the ${\rm Card}(\edgescv)\times {\rm Card}(\edgescv)$ identity matrix and $\mathbb{X}_K$ the matrix
with rows $((\centeredge-\x_\cell)^T)_{\edge\in\edgescv}$.
\end{remark}

\subsubsection{Modified HMM method}

Given that the modification is only on $\Pi_\disc$ (cf. \eqref{def.Pidiscs}), as previously
mentioned the matrix of the modified HMM method is identical to the matrix of the HMM method.
The fluxes of the modified HMM method are therefore still defined by \eqref{def.flux}.
If $v$ is equal to 1 at the degree of freedom corresponding to $K$ and to zero
at all other degrees of freedom, then $\Pi_\discs v=1=\Pi_\disc v$. Hence, the
rows of the source-term corresponding to cell degrees of freedom are also unchanged
with respect to the HMM method. This means that the balance of fluxes \eqref{balance.fluxes}
remains.

The only changes, from the HMM to the modified HMM scheme, in the source-term are 
in the rows corresponding to interior edge unknowns. Taking $v$ equal to $1$ on
$\edge$ (such that $\mesh_\edge=\{K,L\}$) and zero at all other degrees of freedom, 
we have $\nabla_K v=\frac{|\edge|}{|K|}\bfn_{K,\edge}$ (and similarly for $\nabla_L v$)
and therefore
the conservativity equation \eqref{conservativity.fluxes} is modified into
\begin{multline}\label{cons.mod.hmm}
F_{K,\edge}(u)+F_{L,\edge}(u)\\
=\frac{|\edge|}{|K|}\int_K f(\x) \bfn_{K,\edge}\cdot(\x-\x_K)\d\x
+\frac{|\edge|}{|L|}\int_L f(\x) \bfn_{L,\edge}\cdot(\x-\x_L)\d\x.
\end{multline}
The fluxes of the modified HMM method are therefore no longer conservative,
and the modified HMM method is not a finite volume scheme.


\begin{remark}[Preserving the conservativity]
Two options exist to preserve the conservativity of the modified HMM method.
The first one is to re-define the fluxes by setting, for all $K\in\mesh$
and all $\edge\in\edgescv$,
\[
F^*_{K,\edge}(u)=F_{K,\edge}(u) - \frac{|\edge|}{|K|}\int_K f(\x) \bfn_{K,\edge}\cdot(\x-\x_K)\d\x.
\]
Then, from \eqref{cons.mod.hmm} we deduce that
$F^*_{K,\edge}(u)+F^*_{L,\edge}(u)=0$ for all interior edge $\edge$.
Moreover, since $\sum_{\edge\in\edgescv} |\edge|\bfn_{K,\edge}=0$, we
have $\sum_{\edge\in\edgescv}F^*_{K,\edge}(u)=\sum_{\edge\in\edgescv}F_{K,\edge}(u)
=\int_K f(\x)\d\x$, i.e. the new fluxes still satisfy the balance equation.

Another option is to modify the reconstruction $\Pi_\discs$ by taking, instead
of $\nabla_K$, a linearly exact gradient reconstruction based on the cell degrees of
freedom, and not using any edge degrees of freedom. The corresponding new modified HMM
scheme is then naturally conservative, but the source term in the balance equation is
modified (it involves $f$ in neighbouring cells).
\end{remark}

\bigskip

\thanks{Acknowledgement: This research was supported by the Australian Government through the Australian Research Council's Discovery Projects funding scheme (pro\-ject number DP170100605). The authors would like to warmly thanks the referees
for their clever remarks, which lead to an improvement of the super-convergence
result for TPFA.}

\bibliographystyle{abbrv}
\bibliography{jerome_nn}

\begin{thebibliography}{10}

\bibitem{AMS}
P.~Ackerer, R.~Mose, P.~Siegel, and G.~Chavent.
\newblock Reply to the comment on ``application of the mixed hybrid finite
  approx- imation in a groundwater flow model: Luxury or necessity?'' by mos\'e
  et al.
\newblock {\em Water Resour. Res.}, 32(6):1911--1913, 1996.

\bibitem{HWY16}
O.~A. Al-Hinai, M.~F. Wheeler, and I.~Yotov.
\newblock A generalized mimetic finite difference method and two-point flux
  schemes over voronoi diagrams.
\newblock {\em {M2AN} Math. Model. Numer. Anal.}, 51(2):679--706, 2017.

\bibitem{BMO}
J.~Baranger, J.~Maitre, and F.~Oudin.
\newblock Application de la th\'eorie des \'el\'ements finis mixtes \`a
  l'\'etude d'une classe de sch\'emas aux volumes diff\'erences finis pour les
  probl\`emes elliptiques.
\newblock {\em C. R. Acad. Sci., S\'er. 1}, 319:401--404, 1993.

\bibitem{BLM:book}
L.~Beir{\~a}o~da Veiga, K.~Lipnikov, and G.~Manzini.
\newblock {\em The mimetic finite difference method for elliptic problems},
  volume~11 of {\em MS\&A. Modeling, Simulation and Applications}.
\newblock Springer, Cham, 2014.

\bibitem{BH00}
S.~Boivin and J.~M. H\'erard.
\newblock A finite volume method to solve the {N}avier--{S}tokes equations for
  incompressible flows on unstructured meshes.
\newblock {\em Int. J. Therm. Sci.}, 39:806--825, 2000.
\newblock DOI:10.1016/S1290-0729(00)00276-3.

\bibitem{phdbonelle}
J.~Bonelle.
\newblock {\em Compatible {D}iscrete {O}perator schemes on polyhedral meshes
  for elliptic and {S}tokes equations}.
\newblock PhD thesis, University of Paris-Est, 2014.

\bibitem{EB14}
J.~Bonelle and A.~Ern.
\newblock Analysis of compatible discrete operator schemes for elliptic
  problems on polyhedral meshes.
\newblock {\em ESAIM Math. Model. Numer. Anal.}, 48(2):553--581, 2014.

\bibitem{BS08}
S.~C. Brenner and L.~R. Scott.
\newblock {\em The mathematical theory of finite element methods}, volume~15 of
  {\em Texts in Applied Mathematics}.
\newblock Springer, New York, third edition, 2008.

\bibitem{bre-05-fam}
F.~Brezzi, K.~Lipnikov, and V.~Simoncini.
\newblock A family of mimetic finite difference methods on polygonal and
  polyhedral meshes.
\newblock {\em Math. Models Methods Appl. Sci.}, 15(10):1533--1551, 2005.

\bibitem{CAN08}
A.~Cangiani and G.~Manzini.
\newblock Flux reconstruction and solution post-processing in mimetic finite
  difference methods.
\newblock {\em Comput. Methods Appl. Mech. Engrg.}, 197(9-12):933--945, 2008.

\bibitem{CYA}
G.~Chavent, A.~Younes, and P.~Ackerer.
\newblock On the finite volume reformulation of the mixed finite element method
  for elliptic and parabolic pde on triangles.
\newblock {\em Comput. Methods Appl. Mech. Engrg.}, 192:655--682, 2003.

\bibitem{CK}
C.~Cordes and W.~Kinzelbach.
\newblock Comment on ``application of the mixed hybrid finite approximation in
  a groundwater flow model: Luxury or necessity?'' by mos\'e et al.
\newblock {\em Water Resour. Res.}, 32(6):1905--1909, 1996.

\bibitem{DPLE14}
D.~A. Di~Pietro, A.~Ern, and S.~Lemaire.
\newblock An arbitrary-order and compact-stencil discretization of diffusion on
  general meshes based on local reconstruction operators.
\newblock {\em Comput. Meth. Appl. Math.}, 14(4):461--472, 2014.

\bibitem{dom-05-fin}
K.~Domelevo and P.~Omnes.
\newblock A finite volume method for the {L}aplace equation on almost arbitrary
  two-dimensional grids.
\newblock {\em M2AN Math. Model. Numer. Anal.}, 39(6):1203--1249, 2005.

\bibitem{DM85}
J.~Douglas and F.~Milner.
\newblock Interior and superconvergence estimates for mixed methods for second
  order elliptic equations.
\newblock {\em RAIRO Model. Math. Anal. Numer.}, 19:297--328, 1985.

\bibitem{review}
J.~Droniou.
\newblock Finite volume schemes for diffusion equations: introduction to and
  review of modern methods.
\newblock {\em Math. Models Methods Appl. Sci. (M3AS)}, 24(8):1575--1619, 2014.
\newblock Special issue on Recent Techniques for {PDE} Discretizations on
  Polyhedral Meshes.

\bibitem{dro-06-mix}
J.~Droniou and R.~Eymard.
\newblock A mixed finite volume scheme for anisotropic diffusion problems on
  any grid.
\newblock {\em Numer. Math.}, 105(1):35--71, 2006.

\bibitem{dro-08-stu}
J.~Droniou and R.~Eymard.
\newblock Study of the mixed finite volume method for {S}tokes and
  {N}avier-{S}tokes equations.
\newblock {\em Numerical methods for partial differential equations},
  25(1):137--171, 2009.

\bibitem{dro-14-deg}
J.~Droniou and R.~Eymard.
\newblock Uniform-in-time convergence of numerical methods for non-linear
  degenerate parabolic equations.
\newblock {\em Numer. Math.}, 132(4):721--766, 2016.

\bibitem{dro-14-sto}
J.~Droniou, R.~Eymard, and P.~Feron.
\newblock Gradient {S}chemes for {S}tokes problem.
\newblock {\em IMA J. Numer. Anal.}, 36(4):1636--1669, 2016.

\bibitem{koala}
J.~Droniou, R.~Eymard, T.~Gallou\"et, C.~Guichard, and R.~Herbin.
\newblock {\em The gradient discretisation method: a framework for the
  discretization of linear and nonlinear elliptic and parabolic problem}.
\newblock 2016.
\newblock 425p. In revision.
  \texttt{https://hal.archives-ouvertes.fr/hal-01382358} (version 3).

\bibitem{dro-10-uni}
J.~Droniou, R.~Eymard, T.~Gallou{\"e}t, and R.~Herbin.
\newblock A unified approach to mimetic finite difference, hybrid finite volume
  and mixed finite volume methods.
\newblock {\em Math. Models Methods Appl. Sci.}, 20(2):265--295, 2010.

\bibitem{dro-12-gra}
J.~Droniou, R.~Eymard, T.~Gallou{\"e}t, and R.~Herbin.
\newblock Gradient schemes: a generic framework for the discretisation of
  linear, nonlinear and nonlocal elliptic and parabolic equations.
\newblock {\em Math. Models Methods Appl. Sci. (M3AS)}, 23(13):2395--2432,
  2013.

\bibitem{DEH15}
J.~Droniou, R.~Eymard, and R.~Herbin.
\newblock Gradient schemes: generic tools for the numerical analysis of
  diffusion equations.
\newblock {\em {M2AN} Math. Model. Numer. Anal.}, 50(3):749--781, 2016.
\newblock Special issue -- Polyhedral discretization for PDE.

\bibitem{DL14}
J.~Droniou and B.~Lamichhane.
\newblock Gradient schemes for linear and non-linear elasticity equations.
\newblock {\em Numer. Math.}, 129(2):251--277, 2015.
\newblock DOI: 10.1007/s00211-014-0636-y.

\bibitem{eym-12-stef}
R.~Eymard, P.~Feron, T.~Gallou\"et, R.~Herbin, and C.~Guichard.
\newblock Gradient schemes for the {S}tefan problem.
\newblock {\em International Journal On Finite Volumes}, 10s, 2013.

\bibitem{EGH00}
R.~Eymard, T.~Gallou{\"e}t, and R.~Herbin.
\newblock Finite volume methods.
\newblock In P.~G. Ciarlet and J.-L. Lions, editors, {\em Techniques of
  Scientific Computing, Part III}, Handbook of Numerical Analysis, VII, pages
  713--1020. North-Holland, Amsterdam, 2000.

\bibitem{sushi}
R.~Eymard, T.~Gallou{\"e}t, and R.~Herbin.
\newblock Discretization of heterogeneous and anisotropic diffusion problems on
  general nonconforming meshes {SUSHI}: a scheme using stabilization and hybrid
  interfaces.
\newblock {\em IMA J. Numer. Anal.}, 30(4):1009--1043, 2010.

\bibitem{eym-12-sma}
R.~Eymard, C.~Guichard, and R.~Herbin.
\newblock Small-stencil 3d schemes for diffusive flows in porous media.
\newblock {\em M2AN}, 46:265--290, 2012.

\bibitem{zamm2013}
R.~Eymard, C.~Guichard, R.~Herbin, and R.~Masson.
\newblock Gradient schemes for two-phase flow in heterogeneous porous media and
  {R}ichards equation.
\newblock {\em ZAMM Z. Angew. Math. Mech.}, 94(7-8):560--585, 2014.

\bibitem{GagneuxMadaune}
G.~Gagneux and M.~Madaune-Tort.
\newblock {\em Analyse math\'ematique de mod\`eles non lin\'eaires de
  l'ing\'enierie p\'etroli\`ere}, volume~22 of {\em Math\'ematiques \&
  Applications (Berlin) [Mathematics \& Applications]}.
\newblock Springer-Verlag, Berlin, 1996.
\newblock With a preface by Charles-Michel Marle.

\bibitem{HH08}
R.~Herbin and F.~Hubert.
\newblock Benchmark on discretization schemes for anisotropic diffusion
  problems on general grids.
\newblock In {\em Finite volumes for complex applications {V}}, pages 659--692.
  ISTE, London, 2008.

\bibitem{J84}
J.~Jaffr\'e.
\newblock D\'ecentrage et \'el\'ements finis mixtes pour les \'equations de
  diffusion-convection.
\newblock {\em Calcolo}, 21:171--197, 1984.

\bibitem{mfdrev}
K.~Lipnikov, G.~Manzini, and M.~Shaskov.
\newblock Mimetic finite difference method.
\newblock {\em J. Comput. Phys.}, 257(part B):1163--1227, 2014.

\bibitem{OR79}
L.~A. Oganesjan and L.~A. Ruhovec.
\newblock {\em {{V}ariatsionno-raznostnye metody resheniya ellipticheskikh
  uravneni\u\i }}.
\newblock Akad. Nauk Armyan. SSR, Erevan, 1979.
\newblock (Russian) [Variation-difference methods for solving elliptic
  equations].

\bibitem{OMN09}
P.~Omnes.
\newblock Error estimates for a finite volume method for the {L}aplace equation
  in dimension one through discrete green functions.
\newblock {\em IJFV International Journal On Finite Volumes}, 6(1), 2009.

\bibitem{OMN11}
P.~Omnes.
\newblock On the second-order convergence of a function reconstructed from
  finite volume approximations of the {L}aplace equation on
  {D}elaunay--{V}oronoi meshes.
\newblock {\em {M2AN} Math. Model. Numer. Anal.}, 45:627--650, 2011.
\newblock DOI: 10.1051/m2an/2010068.

\bibitem{peaceman-history}
D.~W. Peaceman.
\newblock A personal retrospection of reservoir simulation.
\newblock In {\em A history of scientific computing}, pages 106--129. ACM New
  York, NY, USA, 1990.

\bibitem{VW}
M.~Vohral\'ik and B.~Wohlmuth.
\newblock Mixed finite element methods: implementation with one unknown per
  element, local flux expressions, positivity, polygonal meshes, and relations
  to other methods.
\newblock {\em Math. Models Methods Appl. Sci. (M3AS)}, 23(5):803--838, 2013.

\bibitem{YMAC}
A.~Younes, R.~Mose, P.~Ackerer, and G.~Chavent.
\newblock A new formulation of the mixed finite element method for solving
  elliptic and parabolic pde with triangular elements.
\newblock {\em Journal of Computational Physics}, 149:148--167, 1999.

\end{thebibliography}

\end{document}